\renewcommand\proofname{Proof}
\newtheorem{thm}{Theorem}[section]
\newtheorem{lem}[thm]{Lemma}
\newtheorem{cor}[thm]{Corollary}
\newtheorem{prop}[thm]{Proposition}
\newtheorem{rmk}[thm]{Remark}
\newtheorem{defi}[thm]{Definition}
\def\log{{\rm log}}
\def\ord{{\rm ord}}
\def\min{{\rm min}}
\def\max{{\rm max}}
\def\sup{{\rm sup}}
\def\lim{{\rm lim}}
\def\limsup{{\rm lim\,sup}}
\def\gr{{\rm gr}}
\def\Rees{{\rm Rees}}
\def\Fut{{\rm Fut}}
\def\Proj{{\rm Proj}}
\def\log{{\rm log}}
\def\reg{{\rm reg}}
\def\sing{{\rm sing}}
\def\red{{\rm red}}
\newcommand{\iddbar}{{\sqrt{-1}\partial \bar{\partial}}}
\newcommand{\IC}{{\mathbb C}}
\newcommand{\IN}{{\mathbb N}}
\newcommand{\IP}{{\mathbb P}} 
\newcommand{\IQ}{{\mathbb Q}} 
\newcommand{\IR}{{\mathbb R}}
\newcommand{\IZ}{{\mathbb Z}}
\newcommand{\CE}{{\mathcal E}}
\newcommand{\CF}{{\mathcal F}}
\newcommand{\CK}{{\mathcal K}}
\newcommand{\CL}{{\mathcal L}}
\newcommand{\CO}{{\mathcal O}}
\newcommand{\CU}{{\mathcal U}}
\newcommand{\CX}{{\mathcal X}}
\newcommand{\Ric}{\mathrm{Ric}}
\newcommand{\seq}{\subseteq}
\newcommand{\lam}{\lambda}
\newcommand{\la}{\langle}
\newcommand{\ra}{\rangle}
\newcommand{\D}{\Delta}
\title{Bergman kernels on degenerations}
\author{Linsheng Wang}
\address{Department of Mathematics\\Nanjing University\\
Nanjing\\ 210008\\ China}
\email{linsheng\_wang@outlook.com}
\author{Shengxuan Zhou}
\address{Beijing International Center for Mathematical Research\\
Peking University\\
Beijing\\ 100871\\ China}
\email{zhoushx19@pku.edu.cn}
\thanks{}
\keywords{}
\date{}
\dedicatory{}
\begin{document}

\begin{abstract}
We introduce the fiberwise Bergman kernel for a flat family of polarized varieties over a Riemann surface, which extends the classical Bergman kernel defined on the reduced fibers. We establish the continuity of the fiberwise Bergman kernel and provide a result on uniform convergence for the Fubini-Study currents. As a consequence, we show that the fiberwise Bergman kernel on test configurations exhibits continuity, and the Fubini-Study currents converge uniformly. 
\end{abstract}
\maketitle
\tableofcontents

\section{Introduction}

The Bergman kernels of polarized manifolds serve a significant purpose in K\"ahler geometry as a localized version of the Hilbert polynomial. In the seminal work \cite{tia90a}, Tian utilized his peak section technique to establish that the Bergman metrics corresponding to $L^m$ approach the original polarized metric as $m$ tends to infinity in the $C^2$-topology. Moreover, the Bergman kernels are the K\"ahler potentials of the Bergman metrics. Subsequent to Tian's work, numerous studies have investigated the asymptotic behavior of Bergman kernels and Bergman metrics on a single polarized manifold, as documented in \cite{rua98, zel98, cat97, ll16, dlm06}. This collection of research has yielded a localized version of the asymptotic Hirzebruch-Riemann-Roch formula, recognized as the Tian-Yau-Zelditch expansion. In the case of holomorphic submersions, Ma-Zhang \cite{mz22} have formulated a local version of the asymptotic Riemann-Roch-Grothendieck theorem.

Moreover, an intriguing inquiry is the characterization of Bergman kernels on a particular family of polarized K\"ahler manifolds. One prominent problem in the study of Bergman kernels, known as the partial $C^0$-estimate, is to determine whether there exists a uniform positive lower bound for Bergman kernels on a specific class of polarized K\"ahler manifolds. The partial $C^0$-estimate was first introduced in Tian's work \cite{tia90b} on the search for K\"ahler-Einstein metrics on del Pezzo surfaces and is often regarded as an effective version of the finite generation of the ring $R(X,L) =\oplus_{k\geq 0} H^0 \left( X,L^k \right)$ (see for example \cite{chili1}), akin to Matsusaka's big theorem as an effective version of Kodaira's embedding theorem. The partial $C^0$-estimate, assuming a lower bound on Ricci curvature and non-collapsing condition, has been a powerful tool in the study of K-stability and the Yau-Tian-Donaldson conjecture \cite{tia13,tia15,cds15}, and there have been many works dedicated to this problem \cite{ds14,jia16,ls22,sze16,wz21,zha21a}. It is worth noting that the partial $C^0$-estimate is closely related to the convergence of the Bergman kernel.

This article explores the Bergman kernels on a flat family of complex spaces, specifically defining fiberwise Bergman kernels for flat families over Riemann surfaces and analyzing the continuity of these kernels on the total spaces. When all fibers of the family are reduced complex spaces, the fiberwise Bergman kernel coincides with the standard Bergman kernels on fibers. However, in many cases (including some of interest), the fibers of a flat family are not all reduced. By studying the behavior of the fiberwise Bergman kernel on the total space of degeneration, we can gain insights into the geometric properties of flat families of complex spaces. 

Let us define the context of our study. Consider a normal complex space $\CX$ and a Riemann surface $C$. Let $\pi:\CX\to C$ be a proper surjective morphism with reduced general fibers. Suppose that $\CL$ is a line bundle on $\CX$ equipped with a continuous Hermitian metric $h$, and $\omega$ is a continuous $2$-form on $\CX$ such that its restriction to each fiber is a Hermitian form. Denote the scheme-theoretical fiber of $\pi$ at $t\in C$ by $\CX_t=\pi^{-1}(t)$, which may not necessarily be reduced. The first theorem of our paper is as follows. 

\begin{thm}
\label{maintheorem}
The following statements are equivalent:
\begin{enumerate}
\item The function $h^0(\CX_t, \CL_t)=\dim H^0(\CX_t, \CL_t)$ is constant for all $t\in C$.
\item The fiberwise Bergman kernel (as defined in Definition \ref{definitionfiberwise bergmankernel}) $\rho_{\CL_{\pi (x)} } (x) $ is continuous on $\CX$.
\end{enumerate}
\end{thm}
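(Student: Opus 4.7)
The plan is to prove the two implications separately, combining Grauert's base change theorem with the continuity of fiberwise integration of continuous forms under proper holomorphic maps.

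For $(1)\Rightarrow(2)$, I would first invoke Grauert's theorem: the flatness of $\pi$ together with the constancy of $t\mapsto h^0(\CX_t,\CL_t)$ forces $\pi_*\CL$ to be locally free of rank $N$, and the canonical base change map $(\pi_*\CL)_t\otimes k(t)\to H^0(\CX_t,\CL_t)$ to be an isomorphism for every $t\in C$. Around each $t_0\in C$ one picks a holomorphic local frame $s_1,\ldots,s_N$ of $\pi_*\CL$, whose restrictions $s_i(t)$ form a basis of $H^0(\CX_t,\CL_t)$ for all nearby $t$. The key technical point is that the Gram matrix $G(t)_{ij}=\langle s_i(t),s_j(t)\rangle_t$, with respect to the fiberwise $L^2$ inner product induced by $h$ and $\omega$, varies continuously in $t$; this reduces to continuity of the fiberwise integral $t\mapsto\int_{\CX_t}\langle s_i,s_j\rangle_h\,\omega^n/n!$ of a continuous form pushed forward under the proper map $\pi$. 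Once continuity of $G(t)$ is in hand, Gram--Schmidt produces a continuous orthonormal frame $\{e_i(t)\}$, and $\rho(x)=\sum_i|e_i(\pi(x))(x)|_h^2$ is continuous on $\CX$ as a finite sum of products of continuous functions.

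For the converse $(2)\Rightarrow(1)$, the plan is to extract $h^0(\CX_t,\CL_t)$ from the Bergman kernel via the trace identity $\int_{\CX_t}\rho_t\,\omega^n/n!=h^0(\CX_t,\CL_t)$, a defining property of the Bergman kernel which the paper's fiberwise definition is designed to preserve (with the appropriate conventions on non-reduced fibers). If $\rho$ is continuous on $\CX$, then by the same fiberwise integration argument as above, $t\mapsto\int_{\CX_t}\rho_t\,\omega^n/n!$ is continuous on $C$. Hence $h^0(\CX_t,\CL_t)$ is a continuous integer-valued function on the connected Riemann surface $C$, so it must be constant.

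The main obstacle in both directions is verifying continuity of the fiberwise integral at points $t_0$ where $\CX_{t_0}$ is non-reduced or singular. At such points the scheme structure of the fiber differs from the reductions of nearby fibers by component multiplicities, and a naive Fubini-type argument has to be adapted to track these multiplicities so that the total fiberwise mass is preserved in the limit. A workable strategy is to pass to a log resolution $\tilde{\CX}\to\CX$ on which the family becomes semistable (or at least has simple normal crossings), carry out the continuity argument there for continuous forms under a smooth proper map, and then descend via the projection formula, using the normality of $\CX$ and the continuity of $h$ and $\omega$ to justify the descent and handle the contribution of the non-reduced locus.
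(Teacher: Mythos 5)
Your outline of the two directions is a reasonable skeleton, and your $(2)\Rightarrow(1)$ direction is in fact exactly what the paper does: the trace identity $\int_{\CX_t}\rho_t\,\omega_t^n=h^0(\CX_t,\CL_t)$ plus King's theorem on continuity of fiberwise integration (Theorem~\ref{king continuous thm}) already handles the non-reduced fibers directly, because King's theorem is stated for the integration-with-multiplicities $\int_{\CX_t}f\,\omega_t^n$. No passage to a resolution is needed there, and your proposed log-resolution/projection-formula detour is more work than the argument requires.

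The genuine gap is in $(1)\Rightarrow(2)$ at a non-reduced fiber $\CX_0=\sum_j m_jY_j$, and it is fatal as written. After choosing a local frame $s_1,\dots,s_N$ of $\pi_*\CL$ via Grauert, some of the restrictions $s_{i,0}=s_i|_{\CX_0}$ will be nonzero as sections of $\CL_0$ over the \emph{scheme} $\CX_0$ yet vanish identically on $\CX_{0,\red}$, because they vanish to order between $1$ and $m_j-1$ along some component $Y_j$. For such sections the pointwise norm $h(s_{i,0})$ is the zero function on $\CX_{0,\red}$, so the naive Gram matrix $G(t)_{ij}=\int_{\CX_t}h(s_i,s_j)\omega_t^n$ is continuous but \emph{degenerate} at $t=0$, and Gram--Schmidt breaks down: you cannot produce an $L^2$-orthonormal frame from a degenerate pairing. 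This is precisely why the fiberwise Bergman kernel at $\CX_0$ is not defined via the naive $L^2$ structure on $H^0(\CX_0,\CL_0)$ but via the order function $\ord_0$, the filtration $\CF^\lambda$, and the renormalized Hermitian pairing $h_0$ on the associated graded space $\gr_\CF H^0(\CX_0,\CL_0)$. Correspondingly, the paper's continuity proof does not work with $G(t)$ itself but with the renormalized entries $a_{ik}(t)=|t|^{-\ord_0(s_{i,0})-\ord_0(s_{k,0})}\int_{\CX_t}h(s_{i,t},s_{k,t})\omega_t^n$, shows via the gluing construction of Lemma~\ref{gluing lemma} (which tracks the $m_j$-fold sheeting of nearby fibers over $Y_j$) that these extend continuously to $t=0$ with limit $\int_{\CX_0}h_0(s_{i,0},s_{k,0})\omega_0^n$, and only then applies Gram--Schmidt to the now nondegenerate matrix $(a_{ik}(t))$. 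Your proposal contains neither the $|t|^{-\ord_0}$-rescaling nor any substitute device for recovering a nondegenerate limiting pairing, and the vague appeal to a log resolution and a projection formula does not supply one; without it the argument in the non-reduced case does not close.
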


\begin{rmk} \rm
The theorem remains valid even when $C$ is a higher-dimensional normal complex space, provided that we assume $\pi$ to be flat with reduced fibers. It is worth noting that when $C$ is a Riemann surface, the flatness of $\pi$ follows automatically. For further information, see Lemma \ref{lemmariemannsurfaceflat}.
\end{rmk}

The Bergman kernel can be interpreted as a local version of $0$-th cohomology. Therefore, the continuity of the fiberwise Bergman kernel on fiber spaces can be seen as a local version of the $0$-th Betti numbers of the line bundle's restriction on the fibers being constant. 

Next result of this paper goes back to Tian's work \cite{tia90a} which says that any polarized metric can be approximated by the Bergman metrics. This result has been extended to all Fubini-Study currents (may not be rational) by \cite{cmm17}. With the notation of fiberwise Bergman kernels, we can generlize the approximation result into a uniform version. 

Consider Hermitian line bundles $\{F_j\}_{1\le j\le k}$ on $\CX$ equipped with continuous Hermitian metrics $h_j$. We denote by $h_{j,t}$ the restriction of $h_j$ on $\CX_t$. Assume that $c_1(h_{1,t})\ge \varepsilon\omega_t$ for some uniform constant $\varepsilon>0$ with respect to all $t\in C$, and $c_1(h_{j,t})\ge 0 $ for all $t\in C$ and $2\le j\le k$. For any $1\le j \le k$, let $\{p_{j,m}\}$ be a sequence of positive integers with $\lim_{m\to\infty} \frac{p_{j,m}}{m}=r_j > 0$. Denote $\CL_m = F_1^{p_{1,m}} \otimes \cdots \otimes F_k^{p_{k,m}}$ and $h^{\CL_m}=\prod_{j=1}^k h_{j}^{p_{j,m}}$. The fiberwise Bergman kernel function of $(\CL_m, h^{\CL_m})$ on $\CX$ is simply denoted as $\rho_m(x)=\rho_{\CX_t, \CL_{m,t}}(x)$ for $x\in \CX$ and $t=\pi(x)$. Note that $\sum_{j} r_j c_1 (h_{j,t}) $ is not a integral class in general. 

\begin{thm} \label{uniformly convergencechap1}
The series of functions 
$$\varphi_m(t):=\int_{\CX_t}\frac{1}{m}
|\log(\rho_m)|\omega_t^n$$
converges to $0$ uniformly on compact subsets of $C$ as $m\to\infty$. Consequently, for any smooth $(n-1,n-1)$-form $\alpha$ on $\CX$, the sequence
$$
\int_{\CX_t}\left( \sum_{j=1}^k r_j c_1(h_{j,t}) - 
\frac{1}{m}\omega_{FS,m,t} \right)
\wedge \alpha$$
converges to $0$ uniformly on compact subsets of $C$ as $m\to \infty$.
\end{thm}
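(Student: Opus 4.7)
The plan is to deduce the convergence of Fubini--Study currents (the second assertion) from the $L^1$-convergence of $\tfrac{1}{m}\log\rho_m$ (the first assertion) by fiberwise integration by parts, and to prove the latter by sandwiching $u_m:=\tfrac{1}{m}\log\rho_m$ between a uniform pointwise upper bound and a matching uniform lower bound on the fiberwise integral, leveraging continuity of $\rho_m$ from Theorem \ref{maintheorem}.

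\textbf{Reduction.} Writing $\omega_{FS,m,t}=c_1(h^{\CL_m}_t)+\tfrac{1}{2\pi}i\partial\bar\partial\log\rho_m$, one has on each fiber
\[
\frac{1}{m}\omega_{FS,m,t}-\sum_{j=1}^k r_j c_1(h_{j,t})=\sum_{j=1}^k\!\Bigl(\tfrac{p_{j,m}}{m}-r_j\Bigr)c_1(h_{j,t})+\frac{1}{2\pi m}i\partial\bar\partial\log\rho_m.
\]
Paired with $\alpha$ and integrated over $\CX_t$, the first sum tends to zero uniformly on compacts by $p_{j,m}/m\to r_j$ and continuity of $c_1(h_{j,t})\wedge\alpha$ in $t$; for the second, integration by parts yields $\int_{\CX_t}\tfrac{1}{2\pi m}\log\rho_m\cdot i\partial\bar\partial\alpha$, whose absolute value is bounded by a multiple of $\|\alpha\|_{C^2}$ times $\varphi_m(t)$ after estimating $|i\partial\bar\partial\alpha|$ against $\omega_t^n$ on a compact. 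Hence the second assertion follows from the first.

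\textbf{Sandwich estimates.} The positivity $c_1(h_{1,t})\ge\varepsilon\omega_t$ together with $c_1(h_{j,t})\ge 0$ implies that local weights of $h^{\CL_m}$ are strictly plurisubharmonic along fibers, so the submean inequality applied to unit-$L^2$ holomorphic sections gives $\rho_m(x)\le C_K m^n$ uniformly on compact subsets $K\Subset\CX$; thus $u_m\le\epsilon_m:=(n\log m+\log C_K)/m\to 0$. On the integral side, $\int_{\CX_t}\rho_m\,\omega_t^n=h^0(\CX_t,\CL_{m,t})$, which by flatness of $\pi$ and Serre vanishing for $m\gg 0$ is independent of $t$ and of order $m^n$; Jensen's inequality then yields $\int_{\CX_t}u_m\,\omega_t^n\le o(1)$ uniformly in $t$. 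Combining these via $\int_{\CX_t}|u_m|\,\omega_t^n=2\int_{\CX_t}u_m^+\,\omega_t^n-\int_{\CX_t}u_m\,\omega_t^n$ with $\int u_m^+\le\epsilon_m\cdot\vol(\CX_t)$ reduces the problem to proving the matching lower bound $\int_{\CX_t}u_m\,\omega_t^n\ge -o(1)$ uniformly in $t$.

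\textbf{Main obstacle.} The principal difficulty is this uniform lower $L^1$-bound, especially across the (finite) set of non-reduced fibers. I would proceed in two stages. First, establishing $L^1_{\mathrm{loc}}(\CX)$-convergence $u_m\to 0$: in any local chart, $u_m+\tfrac{1}{m}\phi^{\CL_m}$ is plurisubharmonic and uniformly bounded above, so $L^1$-compactness of quasi-psh families furnishes a subsequential limit $u_\infty\le 0$; fiberwise Tian--Yau--Zelditch asymptotics (via Ohsawa--Takegoshi peak sections on smooth reduced fibers) force $u_\infty\equiv 0$ on the complement of the proper analytic subset $\pi^{-1}(S)\subset\CX$, where $S\subset C$ is the finite locus of non-reduced fibers, and this complement has full measure. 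Fubini then gives $\varphi_m(t)\to 0$ for almost every $t$. Second, to upgrade to uniform convergence on compact subsets of $C$, I would apply Theorem \ref{maintheorem} to $\CL_m$ for $m\gg 0$ (where $h^0(\CX_t,\CL_{m,t})$ is constant), which yields continuity of $\rho_m$ on $\CX$ and hence of $\varphi_m$ on $C$; combining this continuity with the uniform one-sided bounds from the sandwich step and a Dini-type argument converts the almost-everywhere convergence into uniform convergence on compacts.
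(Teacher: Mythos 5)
Your reduction step (pairing with $\alpha$ and isolating the error term $\sum_j(p_{j,m}/m-r_j)c_1(h_{j,t})$) is correct, and your upper bound on $\int_{\CX_t}u_m\,\omega_t^n$ via $\int\rho_m\omega_t^n=h^0(\CX_t,\CL_{m,t})$ and Jensen is a valid shortcut. However, the remainder of the argument has three genuine gaps, the last of which is fatal.

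First, the claim that the submean inequality gives $\rho_m(x)\le C_Km^n$ uniformly on compact $K\Subset\CX$ is not automatic near points where $\CX$ or the reduced fiber $\CX_{t,\red}$ is singular: the local geometry of intersections of Euclidean balls with the singular fibers is not uniformly controlled, and the sublevel sets may have volume behaviour that ruins the $m^{-n}$ factor. The paper first establishes the bound only on $\CX_\reg$ (via smoothing of the Hermitian metrics and the Tian--Yau--Zelditch expansion) and then propagates it to the singular locus by a maximum principle applied along auxiliary curves whose boundary lies in $\CX_\reg$ (Lemma~\ref{circle shell}). Second, the proposed quasi-psh $L^1_{\rm loc}$ compactness on $\CX$ treats $u_m=\tfrac{1}{m}\log\rho_m$ as quasi-psh on the total space, but $\rho_m$ is only defined fiberwise and $\log\rho_m$ is only quasi-psh along the fibers of $\pi$, with no control in the transverse direction; the compactness theorem for quasi-psh families does not apply on $\CX$ as stated.

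Third, and most seriously, the ``Dini-type'' upgrade from almost-everywhere convergence of $\varphi_m(t)$ to uniform convergence on compacts does not work: Dini requires monotonicity in $m$, which $\varphi_m$ does not have, and continuity of each $\varphi_m$ plus a uniform bound plus $\varphi_m(t)\to 0$ for a.e.\ $t$ is simply not enough to exclude mass concentrating near the finitely many bad fibers (think of a travelling bump). What is missing is a quantitative estimate, uniform in $t$, for $\int_{\CX_t\setminus\CK}\tfrac{1}{m}|\log\rho_m|\omega_t^n$ for compact $\CK\Subset\CX_\reg$. This is exactly what the paper supplies in Step~4 via Lemma~\ref{lemmalogfintegral}: after resolving to normal crossings, the integral of $|\log\|s\||$ over a small neighbourhood of $\{s=0\}\cap\CX_0$ is bounded by $C\epsilon|\log\epsilon|$ uniformly over all fibers, and applying this to a finite collection of auxiliary sections of the $\CL_\xi$ controls $|\log\rho_m|$ near the singular part. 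Combined with the two-sided pointwise estimate on compact subsets of $\CX_\reg$ obtained via H\"ormander's $L^2$ estimate and peak sections (Step~3), this yields uniform convergence directly, without any almost-everywhere-to-uniform upgrade. In short, your outline correctly isolates the ``main obstacle'' but does not supply the quantitative singular-part estimate that resolves it.
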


There are some examples that Theorem \ref{maintheorem} and Theorem \ref{uniformly convergencechap1} will apply.

Let $\CL$ be a line bundle on $\CX$ which is relatively ample with respect to $\pi$ (or simply $\pi$-ample), that is, there exists $r\in \IN$, and holomorphic sections $s_0,\cdots,s_N \in H^0(\CX, r\CL)$ inducing a closed embedding $\CX \to \IP^N\times C$, which is compatible with $\pi$. The line bundle $r\CL$ is called relatively very ample with respect to $\pi$. In this case, we conclude that for sufficiently large integer $m>0 $, the fiberwise Bergman kernel of $\CL^m $ is continuous. 

The first nontrivial example is the algebraic elliptic surfaces. Let $(\CX, \CL)$ be a polarized (hence algebraic) elliptic surface, with the corresponding elliptic fibration $\pi:\CX \to C$. By Kodaira's work \cite{kod60, kod63} on the classification of elliptic surfaces, the singular fibers of $\pi$ can be classified into $8$ different types, and most of them are non-reduced, see \cite[Theorem 6.2]{kod63}. 
By the above Corollary, we see that for sufficiently large $m\in \IN$, the fibrewise Bergman kernel of $\CL^m $ is a continuous function on $\CX$. 


Another example is the test configuration (or special degeneration) introduced by \cite{tia97, don02} in studying the existence of K\"ahler-Einstein metric on Fano manifolds. The so-called Yau-Tian-Donaldson conjecture predicts that a Fano manifold $X$ with no nontrivial holomorphic vector field admits K\"ahler-Einstein metric if and only if it is K-stable, that is, for any ample test configuration $(\CX,\CL)$ of $(X, -K_X)$, the generalized Futaki invariant $\Fut(\CX,\CL)$ is positive. A test configuration is a polarized family $\pi:(\CX,\CL)\to \IC$ with a $\IC^*$-action making $\pi$ equivariant. Hence the general fibers are isomorphic, but the central fiber $\CX_0$ may be reducible and non-reduced. See Section \ref{tc} for details.

\begin{cor}
\label{maincoro2}
For any (semiample) test configuration $(\CX,\CL)$ of a polarized projective variety $(X,L)$, the fiberwise Bergman kernels $\rho_m(x)$ of $(\CX, r\CL, h, \omega)$ are continuous on $\CX$, where $r\in \IN$ and $r\CL$ is a globally generated line bundle. Moreover, $ \int_{\CX_t}\frac{1}{m}
|\log(\rho_m)|\omega_t^n$ converges to $0$ uniformly on compact subsets of $C$ as $m\to\infty$.
\end{cor}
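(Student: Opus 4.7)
The plan is to reduce Corollary \ref{maincoro2} to the two main theorems by verifying their hypotheses in the test configuration setting. Take $C = \IC$, the base of the test configuration. The total space $\CX$ is a normal complex space by definition, $\pi:\CX\to\IC$ is proper and surjective, and the $\IC^*$-equivariance gives $\CX_t\cong X$ for $t\ne 0$, so the general fibers are reduced. Flatness of $\pi$ is automatic from Lemma \ref{lemmariemannsurfaceflat} since $\IC$ is a Riemann surface.

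The core technical step is to show that $h^0(\CX_t,(r\CL)^m_t)$ is constant in $t\in\IC$ for each $m$ (or at least all $m$ large, which suffices for the two conclusions). For $t\ne 0$, the $\IC^*$-action identifies $(\CX_t,(r\CL)^m_t)$ with $(X,(rL)^m)$, so $h^0(\CX_t,(r\CL)^m_t)=h^0(X,rmL)$. For $t=0$, I would argue using three ingredients in series: (i) flatness of $\pi$ forces $\chi(\CX_t,(r\CL)^m_t)$ to be locally constant; (ii) since $L$ is ample, Serre vanishing on $X$ gives $h^i(X,rmL)=0$ for $i\ge 1$ and $m$ large, hence $h^0(X,rmL)=\chi(X,rmL)$; (iii) on the total space, relative Serre vanishing yields $R^i\pi_\ast(r\CL)^m=0$ for $i\ge 1$ and $m$ large, so cohomology and base change give $h^0(\CX_0,(r\CL)^m_0)=\chi(\CX_0,(r\CL)^m_0)$, which coincides with the generic value by (i). In the semiample but not relatively very ample case, I would first factor the morphism $\CX\to\IP^N\times\IC$ induced by the global generation of $r\CL$ through its image $\CY$, noting that $(\CY,\CO_\CY(1))$ is a relatively very ample test configuration, and then use Stein factorization together with $\phi_\ast\CO_\CX=\CO_\CY$ to transport the vanishing statement on $\CY$ back to $\CX$ via the projection formula.

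Once the constancy of $h^0$ is in place, continuity of $\rho_m$ on $\CX$ follows immediately from Theorem \ref{maintheorem}. For the uniform convergence of $\int_{\CX_t}\frac{1}{m}|\log\rho_m|\omega_t^n$, I apply Theorem \ref{uniformly convergencechap1} with $k=1$, $F_1=r\CL$, $p_{1,m}=m$ and $r_1=1$. To meet the positivity hypothesis $c_1(h_{1,t})\ge\varepsilon\omega_t$, I would choose $h$ to be the Fubini--Study metric on $\CO(1)$ pulled back through the morphism $\CX\to\IP^N$ associated to $r\CL$, and take $\omega$ as the restriction to $\CX$ of a K\"ahler form on an ambient $\IP^N\times\overline{\IC}$ (after the standard compactification), so that $c_1(h_t)=\omega_t$ fiberwise.

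The step I expect to be the main obstacle is the semiample case: when $r\CL$ is only relatively globally generated, the central fiber $\CX_0$ may have irreducible components on which $(r\CL)_0$ fails to be ample and components that are non-reduced, so the positivity hypothesis of Theorem \ref{uniformly convergencechap1} is not automatic. Reducing to the relatively very ample model $\CY$ and then pulling the statements back to $\CX$---while keeping the Bergman kernels on $\CX$ and $\CY$ identified through the projection formula---is where the argument will require the most care.
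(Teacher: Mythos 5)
Your overall strategy is correct and matches the paper's architecture (reduce to Theorem \ref{maintheorem} and Theorem \ref{uniformly convergencechap1} by establishing constancy of $h^0(\CX_t,mr\CL_t)$), but the route you take to the constancy is genuinely different from the paper's, and as stated it has a gap precisely where you flagged the difficulty. The paper works through the Rees algebra: for the induced filtration $\CF$ on $R=R(X,rL)$ it sets $\CX_\CF=\Proj_{\IC[t]}\Rees_\CF(R)$ and reads off $H^0(\CX_{\CF,0},m\CL_{\CF,0})=\gr_\CF R_m$, whose dimension equals $\dim R_m$ tautologically, for every $m$; for semiample $(\CX,\CL)$ it then cites \cite[Prop.~2.17]{bhj17} for the identification $H^0(\CX_0,r\CL_0)=H^0(\CX_{\CF,0},\CL_{\CF,0})$. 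You instead propose $\chi$-constancy plus Serre vanishing plus cohomology and base change. For the \emph{relatively ample} case this gives constancy for $m\gg0$, which is indeed enough for the asymptotic statement, though it is weaker than the paper's all-$m$ conclusion.

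The gap is in the semiample reduction. The projection formula gives equality of pushforwards on the total space, $\pi_*(mr\CL)=(\pi_\CY)_*\CO_\CY(m)$, and on $\CY$ one does get constancy of $h^0(\CY_t,\CO_{\CY_t}(m))$ for $m\gg0$. But this does not transfer to $\CX$ by itself: the base change map $\pi_*(mr\CL)\otimes k(0)\to H^0(\CX_0,mr\CL_0)$ is always injective (divisibility by $t$), and its source has dimension $h^0(X,mrL)$, so you only recover the semicontinuity inequality $h^0(\CX_0,mr\CL_0)\ge h^0(X,mrL)$ that you already knew. The reverse inequality is exactly what needs proving, and it would follow if $(\phi_0)_*\CO_{\CX_0}=\CO_{\CY_0}$ for the restricted map on central fibers --- but $\phi_*\CO_\CX=\CO_\CY$ on the total space does not imply this when $\phi$ is not flat and the central fiber is non-reduced and reducible; the natural map $\CO_{\CY_0}\to(\phi_0)_*\CO_{\CX_0}$ is in general only injective. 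Stein factorization normalizes the target but does not resolve this. So as written, your semiample step does not close, which is the reason the paper leans on the filtration-theoretic statement of \cite[Prop.~2.17]{bhj17} rather than a sheaf-theoretic projection-formula argument. Your observation about the positivity hypothesis $c_1(h_{1,t})\ge\varepsilon\omega_t$ being problematic in the genuinely semiample case is a fair concern and the paper does not explicitly address it either.
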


\begin{rmk} \rm
In general, the central fiber $\CX_0$ of a test configuration is not a normal space. As far as the authors are aware, this corollary is novel even when only considering the convergence of the Fubini-Study currents on the central fiber.
\end{rmk}

This paper is organized as follows. In Section \ref{section_Fiberwise Bergman kernel}, we recall the classical Bergman kernel and give the definition of the fiberwise Bergman kernel. We prove Theorem \ref{maintheorem} in Section \ref{section_continuity}, and Corollary \ref{maincoro2} in Section \ref{section_uniform convergence}. Some applications of the above theorems will be discussed in Section \ref{section_Applications}.

\section{Fiberwise Bergman kernels}
\label{section_Fiberwise Bergman kernel}
In this section, we define the fiberwise Bergman kernels for proper flat morphisms, and we show that the continuity of the fiberwise Bergman kernels implies the constancy of $0^{th}$ betti numbers of the restriction of line bundle on fibers. 

Let us revisit the standard Bergman kernel on reduced complex space. For further information, refer to \cite{cmm17}. Consider a complex space $X$ with a Radon measure $\mu$ and a holomorphic line bundle $L$ equipped with a continuous Hermitian metric $h$. We define the {\it Bergman space} as the set of holomorphic sections $s$ of $L$ satisfying $\int_{X} \left\Vert s \right\Vert_{h}^2 d\mu < \infty$, denoted as $H^0_{L^2} \left( X ,L \right)$. The space is endowed with the $L^2$-inner product $\left\langle s_1 , s_2 \right\rangle_{L^2 ;X,L ,h} = \int_{X } \left\langle s_1 , s_2 \right\rangle_{h}^2 d\mu$, and the corresponding $L^2$-norm $\left\Vert s \right\Vert_{L^2 ;X,L ,h} = \left\langle s,s \right\rangle_{L^2 ;X,L,h}^{1/2}$. We use the shorthand $\left\langle s_1 , s_2 \right\rangle_{L^2}$ and $\left\Vert s \right\Vert_{L^2}$ for brevity. The {\it Bergman kernel function} is defined as 
$$ \rho_{X, \mu ,L,h} (x) = \sup \left\Vert s(x) \right\Vert_{h}^2, $$ 
where $x\in X$ and the supremum is taken over all $s\in H^0_{L^2} \left( X ,L \right)$ with $\left\Vert s \right\Vert_{L^2}^2=1$. We may also use the abbreviated notation $\rho_L(x)$ when the measure and Hermitian metric are understood.

We make some preparation before defining the fiberwise Bergman kernel functions. 

Consider normal complex spaces $\CX$ and $ C$. Let $\pi:\CX\to C$ be a proper surjective morphism with reduced general fibers. Suppose that $\CL$ is a line bundle on $\CX$ equipped with a continuous Hermitian metric $h$, and $\omega$ is a continuous $2$-form on $\CX$ whose restriction to each fiber is a Hermitian form. Denote the scheme-theoretical fiber of $\pi$ at $t\in C$ by $\CX_t=\pi^{-1}(t)$, whose reduction $\CX_{t,\red}$ is the set-theoretical fiber of $\pi$.  Assume that $\pi$ satisfies one of the following conditions:
\begin{itemize}
	\item $\pi $ is flat, and all fibers of $\pi $ are reduced;
	\item $C$ is a Riemann surface.
\end{itemize}
In the first case, the fiberwise Bergman kernel can be defined as the standard Bergman kernel on reduced complex spaces. We assume that $C$ is a Riemann surface. There may be a descrete set of points over which the fibers are non-reduced. In this case, the flatness of $\pi$ follows automatically by the following lemma. 

\begin{lem}
\label{lemmariemannsurfaceflat}
Let $\CX$ be a reduced and irreducible analytic variety, and $\pi:\CX\to C$ be a surjective holomorphic map to a Riemann surface $C$. Then $\pi$ is flat. 
\end{lem}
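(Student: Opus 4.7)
The plan is to check flatness stalk-by-stalk, exploiting the fact that the base $C$ is one-dimensional. Fix $x \in \CX$ and set $t = \pi(x) \in C$. Because $C$ is a Riemann surface, $\CO_{C, t}$ is a discrete valuation ring with uniformizer $\tau$, and over a DVR a module is flat if and only if it is torsion-free. Therefore the lemma reduces to showing that the pullback $\pi^{\sharp}\tau \in \CO_{\CX, x}$ is a non-zero-divisor at every point $x \in \CX$.

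To establish this, I would argue by contradiction. Since $\CX$ is reduced, the Noetherian local ring $\CO_{\CX, x}$ is reduced, so its zero-divisors form the union of its minimal primes; these minimal primes correspond bijectively to the local analytic branches of $\CX$ at $x$, i.e.\ the irreducible components of the germ $(\CX, x)$. Hence, if $\pi^{\sharp}\tau$ were a zero-divisor, some local branch $B$ through $x$ would satisfy $\pi|_B \equiv t$, i.e.\ $B$ would be contained in the fiber $\CX_t$ near $x$.

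The contradiction then comes from dimension considerations. Since $\CX$ is reduced and irreducible, it has pure dimension $n := \dim \CX$, and consequently every local branch at every point also has dimension $n$. On the other hand, surjectivity of $\pi$ onto the positive-dimensional $C$ implies that $\pi$ is non-constant; irreducibility of $\CX$ then forces every closed analytic subset of $\CX$ of dimension $n$ to equal $\CX$, so no component of any fiber $\CX_t$ can have dimension $n$. Thus $\dim_x \CX_t < n$ at every $x \in \CX_t$, which is incompatible with $B \subset \CX_t$ near $x$ together with $\dim B = n$.

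The only nontrivial inputs are the standard analytic-geometric facts that the minimal primes of $\CO_{\CX, x}$ correspond to local analytic branches of the germ $(\CX, x)$, and that a reduced irreducible complex analytic variety is of pure dimension (so every local branch has the full dimension $n$). With these in hand the argument above is immediate, and the principal obstacle is bookkeeping rather than genuine content.
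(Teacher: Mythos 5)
Your proof is correct, and it follows the same essential strategy as the paper: reduce to checking flatness of the stalks $\CO_{\CX,x}$ over the DVR $\CO_{C,\pi(x)}$, and then use the criterion that a module over a DVR is flat if and only if it is torsion-free (equivalently, if the uniformizer acts as a non-zero-divisor).

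The one place where you diverge is in how you establish torsion-freeness, and your extra care is actually warranted. The paper asserts that since $\CX$ is reduced and irreducible, $\CO_{\CX,x}$ is an integral domain; from this, torsion-freeness over $\IC[t]_{(t)}$ is immediate. But global irreducibility of a reduced analytic variety does \emph{not} by itself make the local rings domains (a nodal curve is globally irreducible yet locally reducible at the node). What actually justifies the paper's claim is the standing hypothesis elsewhere in the section that $\CX$ is \emph{normal}, since normal complex spaces are locally irreducible; the lemma as stated, however, only assumes reduced and irreducible. Your argument --- passing to local analytic branches, identifying zero-divisors with the union of minimal primes, and ruling out a branch contained in a fiber by comparing its dimension $n$ with $\dim \CX_t < n$ --- proves the lemma in the generality it is stated, without invoking normality. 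So you arrive at the same conclusion by the same route, but you have in effect filled a small gap: the paper's one-line domain claim is replaced by a dimension argument that does not need local integrality.
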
 
\begin{proof}
The question is local on $C$. We may assume that $C=\D = \{z\in \IC: |z|^2 < 1\}$ is the unit disk, and $\CX_t$ is reduced for all $t\in \D^\circ=\D\setminus \{0\}$.  We denote by $\CO_\CX$ the holomorphic structure sheaf of $X$, and it suffice to show that the local ring $\CO_{\CX, x}$ is flat over $\IC[t]_{(t)}$ (the local ring of $\IC[t]$ at the ideal $(t)$) for any $x\in \pi^{-1}(0)$. Since $\CX$ is reduced and irreducible, $\CO_{\CX, x}$ is an integral domain. It is a torsion free $\IC[t]_{(t)}$ algebra, hence is flat.  
\end{proof}

Note that $\CX_0$ may be non-reduced. Since $\pi$ is flat, all the irreducible components of $\CX_0$ is of codimension one in $\CX$. Hence we have the decomposition $\CX_0=\sum_{j=1}^q m_j Y_j$ as an effective divisor on $\CX$, where $Y_j$ are the irreducible components of $\CX_0$. Let $f$ be a continuous function on $\CX_{0,\red }$, and $\mu$ a non-negative measure on $\CX_{0,\red}$. We define the integration with multiplicity by
$$\int_{\CX_0}fd\mu:=\sum_{j=1}^q m_j \int_{Y_j} fd\mu, $$
where $\int_{Y_j} fd\mu $ are the standard integration. If $f=\mathbf{1}_{A}$ for some subset $A\subset \CX_{0,\red}$, then we write $\int_{A} d\mu = \int_{\CX_0}fd\mu $. When $\CX_0$ is reduced, we see that all the $m_i$ are $1$, and the integration $\int_{\CX_{0}}fd\mu$ is consistent with the standard definition.

With the notations above, we have the following theorem of J. King.

\begin{thm}{\cite[Theorem 3.3.2]{kin71}}
\label{king continuous thm}
Let $f :\CX \to \IC$ be a continuous function, and let $\omega_t $ be the restriction of $\omega $ on $\CX_{t,\red}$. Then the function of fiberwise integration with multiplicity, $\varphi (t)=\int_{\CX_t}f \omega_t^n$, is continuous on $C$.  
\end{thm}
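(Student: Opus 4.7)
The plan is to localize the question on $C$ and on $\CX$, and then to analyze the degeneration via the explicit local model of a flat one-parameter family near a smooth point of each component $Y_j$. Away from the discrete set of $t\in C$ at which the fiber is non-reduced, the statement is the classical continuity of fiber integrals for a proper holomorphic family of reduced analytic spaces, so the substance lies in continuity at a non-reduced fiber. Fix such a $t_0$, which I may take to be $0$ after choosing a disk $\D\subset C$ around it, and write $\CX_0 = \sum_{j=1}^q m_j Y_j$. The target identity is
$$ \lim_{t\to 0} \int_{\CX_t} f\,\omega_t^n \;=\; \sum_{j=1}^q m_j \int_{Y_j} f\,\omega_0^n. $$

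For the local analysis I would work near a smooth point $x_0 \in Y_j$ which lies on no other component and at which $\CX$ itself is smooth. Flatness of $\pi$ over the disk germ, together with the structure theorem for one-dimensional regular local rings, lets me pick coordinates $(z_1,\ldots,z_n)$ on a neighborhood $U$ of $x_0$ in which $Y_j\cap U = \{z_1 = 0\}$ and $\pi^*t = u(z)\, z_1^{m_j}$ for a nonvanishing holomorphic $u$. For small $t\neq 0$, $\CX_t\cap U$ is an unramified $m_j$-sheeted cover of $Y_j\cap U$ via the projection $(z_2,\ldots,z_n)$, and a direct change-of-variables estimate shows that $\int_{\CX_t\cap U} f\,\omega_t^n \to m_j \int_{Y_j\cap U} f\,\omega_0^n$ as $t\to 0$. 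This is exactly where the multiplicity $m_j$ appearing in the definition of the integral on the non-reduced fiber is reproduced geometrically. Using a partition of unity on a neighborhood of $\CX_0$ subordinate to charts of the above type together with charts in arbitrarily small open neighborhoods of the bad set
$$ B \;:=\; \CX_\sing \cup \bigcup_j (Y_j)_\sing \cup \bigcup_{j\neq k} (Y_j\cap Y_k), $$
one patches the local limits together.

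The main obstacle is a uniform control of the fiber integral in a small tube around $B$. Because $\CX$ is normal and each $Y_j$ is an irreducible hypersurface, $B\cap \CX_{0,\red}$ has complex codimension at least one in $\CX_{0,\red}$, hence is $\omega_0^n$-null, and its contribution to the right-hand side can be made arbitrarily small by shrinking the neighborhood. What is needed is a companion estimate of the form
$$ \int_{\CX_t \cap V} \omega_t^n \;\le\; C \cdot \vol_{\omega_0}\bigl( V \cap \CX_{0,\red} \bigr) $$
for every small open neighborhood $V$ of $B$ and every $|t|$ sufficiently small, with $C$ independent of $V$ and of $t$. The cleanest route, which is the one taken in King's original argument, is to regard $\CX_t$ as a holomorphic cycle in $\CX$, with the multiplicities $m_j$ encoded in the cycle structure, and to invoke the continuity of the current of integration $[\CX_t]$ with respect to $t$. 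Alternatively, one can resolve the family by a birational modification $\widetilde{\CX}\to\CX$ making the total transform of $\CX_0$ a simple normal crossings divisor, apply the local cover computation at each crossing while carefully tracking the multiplicities, and push the identity forward to $\CX$. Either way the theorem reduces to the explicit $m_j$-sheeted model together with a negligible contribution from a codimension-two analytic set.
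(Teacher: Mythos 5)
The paper does not prove this statement; it cites it directly from King \cite[Theorem 3.3.2]{kin71}, so there is no internal argument to compare against. Your sketch correctly identifies the two ingredients such a proof requires: (i) the local $m_j$-sheeted unramified covering model near a smooth point of a single component $Y_j$ of $\CX_{0,\red}$ at which $\CX$ itself is smooth, yielding $\int_{\CX_t\cap U} f\,\omega_t^n \to m_j\int_{Y_j\cap U} f\,\omega_0^n$ (this is the same local picture the paper uses in the proof of Lemma~\ref{gluing lemma}, where the model is $\pi(z_0,\ldots,z_n)=z_0^{m_j}$); and (ii) a uniform-in-$t$ bound showing that the mass of $\CX_t$ in a shrinking tube around the bad set $B$ tends to zero. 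A small slip: since $\dim\CX_t=n$, the total space $\CX$ has dimension $n+1$ and the coordinate system should be $(z_1,\ldots,z_{n+1})$, not $(z_1,\ldots,z_n)$.

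For step (ii), however, your ``cleanest route'' invokes the continuity of $t\mapsto[\CX_t]$ as a current of integration on the cycle $\sum_j m_j Y_j$---but that continuity against continuous forms is exactly what King's theorem asserts, so as written you are citing the statement rather than proving it. The tube estimate is the genuine analytic content; it rests on a uniform local mass bound of Lelong--Bishop type for the family of $n$-cycles $\CX_t$ together with the positivity of the codimension of $B\cap\CX_{0,\red}$, and the precise inequality you propose, $\int_{\CX_t\cap V}\omega_t^n\le C\,\vol_{\omega_0}(V\cap\CX_{0,\red})$ with $C$ independent of both $V$ and $t$, is stronger than what is needed and would itself require justification. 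The resolution-of-singularities alternative you mention is a legitimate second route but likewise needs a separate continuity argument under the modification. In short, your outline matches the expected structure of King's argument and is useful as a heuristic, but the crucial step is presented as an appeal to the very result at hand; as the paper itself simply cites King, that is defensible, but it should be stated as a citation rather than phrased as a derivation.
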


Fix $0\neq s\in H^0 (\CX_0 ,\CL_0 ) $. Then for any point $p\in \CX_{0,\red } $, there exists an open neighbourhood $U$ of $p$ in $\CX$ and a holomorphic section $\tilde{s} \in H^0 (U ,\CL ) $ such that $\tilde{s} |_{\CX_0 } = s$. If $U \cap Y_j \neq \varnothing $, then it is a prime divisor on $U$. We may define 
\begin{displaymath}
\ord_{Y_j}(s) = \left\{ \begin{aligned}
\quad\ord_{Y_j}(\tilde{s}) & \quad \textrm{ if } \ord_{Y_j}(\tilde{s}) \leq m_j -1 ; \\
\quad\infty\quad\;\; & \quad \textrm{ if } \ord_{Y_j}(\tilde{s}) \geq m_j,
\end{aligned} \right. 
\end{displaymath}
which is independent of the choice of $U$ and $\tilde{s}$. 
And we denote by
$$\ord_0(s) = \min_{1\leq j\leq q} \frac{\ord_{Y_j}(s)}{m_j}. $$
Since $s$ is non-zero in $H^0 (\CX_0 ,\CL_0 )$, we have $\ord_0(s) <1$. 
By reordering those $j$, we may assume that $\ord_0(s) = \frac{\ord_{Y_1}(s)}{m_1} $. Hence $ t^{-\ord_{Y_1} (s)}\tilde{s}^{m_1}$ is a non-zero holomorphic section in $H^0 (U ,\CL^{m_1})$, and we have a continuous function 
$$h(t^{-\ord_{Y_1}(s)}\tilde{s}^{m_1})^{\frac{1}{m_1}} = |t|^{-2\ord_0(s)}h(\tilde{s}), $$
on $U$. Hence for any $0\le \alpha \le \ord_0(s)$, we may define
$$\tilde{h}_{0,\alpha}(\tilde{s})(x) = |\pi(x)|^{-2\alpha}h(\tilde{s})(x), x\in U, $$
which is a continuous function on $U$.
Note that the restriction of this function on $U\cap \CX_{0,\red}$ is independent of the choice of $\tilde{s}$. We get a continuous function on $U\cap \CX_{0,\red}$ by 
$$h_{0, \alpha}(s)=\tilde{h}_{0,\alpha}(\tilde{s})|_{U\cap \CX_{0,\red}}. $$
By taking an open cover, we see that the function $h_{0,\alpha}(s)$ is defined on the whole $\CX_{0, \red}$. 
The function $h_{0,\alpha}(s)$ is identically zero on $\CX_{0,\red}$ when $\ord_0(s) > \alpha$, and is non-zero on some irreducible component of $\CX_{0,\red}$ when $\ord_{0}(s)=\alpha$.



Similarly, we may define a function $h_{0,\alpha}(s,s')$ on $\CX_{0,\red}$ for $s,s' \in H^0(\CX_0, \CL_0)$ with $\ord_0(s),$ $\ord_0(s') \ge \alpha$. For any point $p\in \CX_{0, \red}$ there exists open neighbourhood $U$ of $p$ in $\CX$ such that $s, s'$ extends to holomorphic sections $\tilde{s}, \tilde{s}'$ on $U$ respectively. We first define 
\begin{eqnarray*}
2\tilde{h}_{0,\alpha} (\tilde{s},\tilde{s}') (x) & = & 
\tilde{h}_{0,\alpha}(\tilde{s}+\tilde{s}')(x)-\tilde{h}_{0,\alpha}(\tilde{s})(x)-\tilde{h}_{0,\alpha}(\tilde{s}')(x)
 \\
& & + \sqrt{-1} \left( \tilde{h}_{0,\alpha}(\tilde{s}+\sqrt{-1}\tilde{s}')(x)-\tilde{h}_{0,\alpha}(\tilde{s})(x)-\tilde{h}_{0,\alpha}(\tilde{s}')(x) \right) ,
\end{eqnarray*}
for $x\in U$. The restriction of such a function on $U\cap \CX_{0,\red}$ is also independent of the choice of $\tilde{s}, \tilde{s}'$. Then we denote by
$$h_{0,\alpha}(s,s'):=\tilde{h}_{0,\alpha} (\tilde{s},\tilde{s}')|_{U\cap \CX_{0, \red}}, $$
which is indeed defined on the whole $\CX_{0, \red}$. It's clear that $h_{0,\alpha}(s,s)=h_{0,\alpha}(s)$. The function $h_{0,\alpha}(s, s')$ is identically zero if $\ord_0(s)+\ord_0(s')>2\alpha$.

The function $\ord_0$ induces a descending filtration $\CF$ on $H^0 (\CX_0 ,\CL_0 )$, which is defined by 
$$\CF^{\lam}H^0 (\CX_0 ,\CL_0 ) = \left\lbrace s\in H^0 (\CX_0 ,\CL_0 ) : \ord_0(s) \geq\lambda \right\rbrace, \lam\in\IR.$$
There are rational numbers $0\leq \lambda_1 <\cdots <\lambda_l <1$ such that the locally constant function $\lam \mapsto \dim \CF^{\lam}H^0 (\CX_0 ,\CL_0 ) $ only jumps at those $\lambda_i$. Hence the image of the map $\ord_0 : H^0 (\CX_0 ,\CL_0 ) \to [0,\infty ] $ is just $\{ \lambda_1 ,\cdots ,\lambda_l ,\infty \}$. Write $\lambda_{l+1} =1 $, then $\CF^{\lam_{l+1}}H^0 (\CX_0 ,\CL_0 ) =0 $.

By definition, the pairing $h_{0,\lam_i}(-,\sim)(x)$ is a Hermitian form on $\CF^{\lam_i}H^0 (\CX_0 ,\CL_0)$ for any $x\in \CX_{0, \red}$, which is only semi-positive definite. 
One can see that $h_{0,\lam_i} (s,s') (x) = 0$ when $s$ or $s'\in \CF^{\lam_{i+1}}H^0 (\CX_0 ,\CL_0 ) $. Hence $h_{0,\lambda_i } (-,\sim) (x)$ is actually a Hermitian form on $\gr_\CF^{\lam_i} H^0 (\CX_0 ,\CL_0 ) $, where 
$$\gr_\CF^{\lam_i} H^0 (\CX_0 ,\CL_0 ) = \CF^{\lam_i}H^0 (\CX_0 ,\CL_0 ) / \CF^{\lam_{i+1}}H^0 (\CX_0 ,\CL_0 ). $$
Let $\gr_\CF H^0 (\CX_0 ,\CL_0 ) = \oplus_{i=0}^l \gr_\CF^{\lam_i} H^0 (\CX_0 ,\CL_0 ) $. For any $x\in\CX_{0,\red }$, we denoted by
$$h_0 (- ,\sim ) (x) := \sum_{i=1}^l h_{0,\lambda_i } (-,\sim) (x), $$ 
which is a Hermitian form on $\gr_\CF H^0 (\CX_0 ,\CL_0 )$. 

For any $s\in \gr_\CF H^0 (\CX_0 ,\CL_0 ) \backslash \{ 0 \}$, we have $h_0 (s,s) (x)> 0$ for some $x\in\CX_{0,\red} $. Hence the integral
\begin{displaymath}
\left\langle s,s' \right\rangle_{L^2 ;\CX_0 } = \int_{\CX_0 } h_0 (s,s') \omega_0^n, 
\end{displaymath}
defines a Hermitian metric on $\gr_\CF H^0 (\CX_0 ,\CL_0 )$. And the graded decomposition 
$$\gr_\CF H^0 (\CX_0 ,\CL_0 ) = \oplus_{i=0}^l \gr_\CF^{\lam_i} H^0 (\CX_0 ,\CL_0 ),$$ 
is orthogonal with respect to this Hermitian metric. We simply denoted by $\Vert s \Vert_{L^2 ;\CX_0 } = ( \left\langle s,s \right\rangle_{L^2 ;\CX_0 } )^{\frac{1}{2}} $.

Now we are ready to state the definition of fiberwise Bergman kernel on $\CX_0 $.

\begin{defi} \rm
\label{definitionfiberwise bergmankernel}
The {\it fiberwise Bergman kernel} of the system $(\CX, \CL, h, \omega )$ on $\CX_0 $ is defined to be the function on $\CX_{0,\red }$
$$\rho_{\CL_0} (x)=\sup \lVert s(x) \rVert_{h_0}^2,\quad x\in \CX_{0,\red } , $$
where the supremum runs over all $s\in \gr_\CF H^0(\CX_0, \CL_0 )$ with $\lVert s \rVert_{L^2 ;\CX_0 } = 1$. 
\end{defi}

\begin{rmk} \rm
If $\CX_0$ is reduced, $0\neq s\in H^0 (\CX_0 ,\CL_0 ) $ implies that $\ord_0 (s) =0 $, $h_0 =h $, and $\rho_{\CL_0}$ is just the standard Bergman kernel on the reduced complex space. Hence the fiberwise Bergman kernel can be viewed as a generalization of the standard Bergman kernel.
\end{rmk}

Let $\{s_j\}_{1\le j\le N}$ be an $L^2$-orthonormal basis of $\gr_\CF H^0(\CX_0, \CL_0 )$. Then we have 
$$\rho_{\CL_0} (x)=\sum_{j=1}^N \lVert s_j(x) \rVert_{h_0}^2. $$ 
Indeed, for any $s\in \gr_\CF H^0(\CX_0, \CL_0 )$ with $\Vert s \Vert_{L^2 ;\CX_0 }^2=1$, we have $s=\sum_ja_js_j$ for some $a_j\in\IC$ and $\sum_j|a_j|^2=1$. Then by Schwarz lemma we have 
$\lVert s(x) \rVert_{h_0}^2 \le\sum_j\lVert s_j(x) \rVert_{h_0}^2$, and the equality holds if and only if $a_js_i(x)=a_is_j(x)$ for all $i,j$.

As a consequence, we have $\int_{\CX_0 } \rho_{\CL_0} \omega_0^n = \dim_{\mathbb{C}} \gr H^0(\CX_0, \CL_0 ) = h^0(\CX_0, \CL_0 )$. By Theorem \ref{king continuous thm}, we can obtain the following result.

\begin{cor}
\label{bgmc0imH0constant}
Let $(\CX,\CL,\omega, h, \pi, C)$ be as above. Assume that either $C$ is a Riemann surface or all fibers of $\pi$ are reduced. 
If the fiberwise Bergman kernel $\rho_{\CL_{\pi (x)}} (x)$ is continuous. Then the function $h^0(\CX_t, \CL_t)=\dim H^0(\CX_t, \CL_t)$ is constant for all $t\in C$.
\end{cor}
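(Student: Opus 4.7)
The plan is to deduce the corollary immediately from the identity $\int_{\CX_t}\rho_{\CL_t}\omega_t^n = h^0(\CX_t,\CL_t)$ that was established in the paragraph just before the statement, combined with King's continuity theorem (Theorem \ref{king continuous thm}) and the standard observation that an integer-valued continuous function on a connected space is constant.

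More precisely, I would proceed as follows. First, recall that for every $t\in C$ the definition of the fiberwise Bergman kernel together with the choice of an $L^2$-orthonormal basis $\{s_j\}$ of $\gr_\CF H^0(\CX_t,\CL_t)$ yields the pointwise formula $\rho_{\CL_t}(x)=\sum_j \lVert s_j(x)\rVert_{h_t}^2$, and integrating with multiplicity along the fiber gives
$$\int_{\CX_t}\rho_{\CL_t}\,\omega_t^n = \dim_{\IC}\gr_\CF H^0(\CX_t,\CL_t) = h^0(\CX_t,\CL_t),$$
since the graded pieces decompose $H^0(\CX_t,\CL_t)$ as a vector space. So it suffices to show that the right-hand side is constant in $t$.

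Next, by the hypothesis of the corollary, the function $x\mapsto \rho_{\CL_{\pi(x)}}(x)$ is continuous on $\CX$. Theorem \ref{king continuous thm} then applies directly to this continuous function (in the Riemann surface case; in the other case, where all fibers are reduced, the classical fiberwise integration statement likewise applies since multiplicities are all one), yielding that
$$t\longmapsto \int_{\CX_t}\rho_{\CL_t}\,\omega_t^n$$
is continuous on $C$. Combining this with the identity of the first step shows that $t\mapsto h^0(\CX_t,\CL_t)$ is continuous.

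Finally, since $h^0(\CX_t,\CL_t)\in\IZ_{\ge 0}$ takes only integer values and $C$ is connected (or the argument is applied on each connected component), a continuous integer-valued function on $C$ must be constant, which is the desired conclusion. There is no real obstacle here; the only point that requires a little care is verifying that King's theorem, which is stated for continuous functions integrated with multiplicity, matches precisely the convention used in the definition of $\rho_{\CL_t}$ on non-reduced fibers, but this is exactly how the fiberwise Bergman kernel and the identity $\int_{\CX_t}\rho_{\CL_t}\omega_t^n=h^0$ were set up in the previous pages.
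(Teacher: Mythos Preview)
Your proposal is correct and follows exactly the approach indicated in the paper: the paragraph preceding the corollary establishes $\int_{\CX_t}\rho_{\CL_t}\,\omega_t^n = h^0(\CX_t,\CL_t)$, and the paper then simply invokes Theorem~\ref{king continuous thm} to conclude. Your write-up is in fact more detailed than the paper's one-line justification, and your remark about connectedness of $C$ (or working componentwise) fills in a point the paper leaves implicit.
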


\section{The continuity of fiberwise Bergman kernels}
\label{section_continuity}

We will prove Theorem \ref{maintheorem} in this section. The key ingredient is the following theorem. 

\begin{thm}[Grauert] \label{Grauertuppersemicontinuous}
Let $\pi :\CX\to C$ be a proper flat morphism between reduced complex spaces. For any line bundle $ \CL $ on $\CX$, the function $t \mapsto h^0 (\CX_t ,\CL_t ) $ is upper semi-continuous on $C$ in Zariski topology. 
Moreover, if $h^0 (\CX_t ,\CL_t )$ is constant for $t\in C$, then the coherent sheaf $\pi_*\CL $ is locally free, and the restriction map induces an isomorphism $ (\pi_*\CL)_t / \mathfrak{m}_t (\pi_*\CL)_t \cong H^0(\CX_t, \CL_t)$ for any $t\in C$.
\end{thm}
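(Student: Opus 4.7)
The plan is to reduce both assertions to a statement about ranks of maps between locally free sheaves, using the analytic version of the cohomology and base change theorem. The question is local on $C$, so we fix a point $t_0 \in C$ and work in a neighbourhood where the needed finite complex exists.

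The first step is to invoke Grauert's theorem on the existence of a finite resolution. By flatness of $\pi$, the sheaf $\CL$ is $\pi$-flat. After shrinking $C$ around $t_0$, there exists a finite complex of locally free coherent $\CO_C$-modules of finite rank,
\begin{equation*}
K^\bullet : 0 \to K^0 \xrightarrow{\phi^0} K^1 \xrightarrow{\phi^1} \cdots \xrightarrow{\phi^{n-1}} K^n \to 0,
\end{equation*}
together with a functorial base change isomorphism
\begin{equation*}
R^i\pi_*(\CL \otimes_{\CO_\CX} \pi^*\CG) \;\cong\; h^i(K^\bullet \otimes_{\CO_C} \CG)
\end{equation*}
for every coherent $\CO_C$-module $\CG$. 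I would then specialize this in two ways: taking $\CG = \CO_C$ gives $\pi_*\CL = \ker\phi^0$, and taking $\CG = \IC(t) = \CO_C/\mathfrak{m}_t$ (the skyscraper sheaf at $t\in C$) gives
\begin{equation*}
H^0(\CX_t,\CL_t) \;\cong\; \ker\bigl(\phi^0(t): K^0(t) \to K^1(t)\bigr),
\end{equation*}
where $K^i(t) = K^i \otimes_{\CO_C} \IC(t)$ is the fibre.

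For the upper semi-continuity, the observation is that
\begin{equation*}
h^0(\CX_t,\CL_t) \;=\; \dim_\IC K^0(t) - \operatorname{rank}\phi^0(t).
\end{equation*}
The first term is locally constant because $K^0$ is locally free, and the function $t\mapsto \operatorname{rank}\phi^0(t)$ is lower semi-continuous in the Zariski topology (the locus where the rank drops is cut out by vanishing of minors of a matrix representative of $\phi^0$, hence is Zariski closed). Subtracting therefore produces an upper semi-continuous function.

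For the second statement, assume $h^0(\CX_t,\CL_t)$ is constant near $t_0$. Then $\operatorname{rank}\phi^0(t)$ is constant, so after further shrinking, both the image and cokernel of $\phi^0$ are locally free (a morphism of locally free sheaves of constant fibrewise rank has locally free image and cokernel). From the short exact sequence $0\to \ker\phi^0 \to K^0 \to \operatorname{im}\phi^0 \to 0$ with both outer terms locally free, I conclude that $\pi_*\CL = \ker\phi^0$ is locally free. Moreover, since $\operatorname{im}\phi^0$ is locally free, the sequence stays exact under $-\otimes \IC(t)$, which yields
\begin{equation*}
(\pi_*\CL)_t / \mathfrak{m}_t (\pi_*\CL)_t \;\cong\; \ker\bigl(\phi^0(t)\bigr) \;\cong\; H^0(\CX_t,\CL_t),
\end{equation*}
and this composition is precisely the restriction map, finishing the proof.

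The main obstacle is genuinely the first step: the existence of the finite complex $K^\bullet$ of locally free sheaves realising $R\pi_*\CL$ universally under base change is the heart of Grauert's direct image theorem in the analytic category, and its proof requires a delicate Cartan-Serre type argument with Fréchet/DFN spaces and the Schwartz finiteness theorem. Once that complex is in hand, everything else is linear-algebraic bookkeeping about ranks of morphisms between locally free sheaves.
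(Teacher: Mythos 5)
Your proof is correct and is exactly the standard argument that the references the paper cites (Grauert--Remmert Section~10.5, Kiehl--Verdier, and the algebraic counterpart in Hartshorne III.12) carry out: reduce to a bounded complex $K^\bullet$ of finite-rank locally free $\CO_C$-modules computing $R\pi_*\CL$ universally under base change, then do linear algebra with ranks. The paper itself gives no proof and simply points to those sources, so there is nothing to contrast; you have accurately reconstructed the argument behind the citation. One point worth highlighting explicitly, since it silently enters your step ``a morphism of locally free sheaves of constant fibrewise rank has locally free image and cokernel'': this uses that $C$ is \emph{reduced}. On a non-reduced base, constant fibre rank does not imply local freeness (multiplication by $\epsilon$ on $\IC[\epsilon]/(\epsilon^2)$ is the standard counterexample), and indeed the theorem as stated assumes $\CX$ and $C$ are reduced. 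The rest of your bookkeeping --- that $\mathrm{rank}\,\phi^0(t)$ is lower semi-continuous because the degeneracy locus is cut out by vanishing of minors (an analytic, i.e.\ Zariski-closed, subset), that constancy of $h^0$ forces local freeness of $\ker\phi^0$ via the locally split sequence, and that $-\otimes\IC(t)$ then commutes with forming $\ker\phi^0$ --- is all sound. Your closing sentence correctly identifies the genuinely hard input: the existence of $K^\bullet$ is the content of Grauert's coherence/direct image theorem, which your proof (appropriately) treats as a black box.
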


\begin{proof}
See \cite[Section 10.5]{gs84} or \cite{kiv1} for details. An algebraic version of this theorem can be found in \cite[Theorem 3.12.8]{har77}.
\end{proof}

As a consequence, we have the following lemma.

\begin{lem} \label{Grauertlemma}
Let $\pi :\CX\to C$ be a proper flat morphism between reduced complex spaces. For any line bundle $ \CL $ on $\CX$,  the following properties are equivalent:
\begin{enumerate}
\item The function $h^0 (\CX_t ,\CL_t ) $ is constant for $t\in C$.
\item For any $t_0 \in C$, there are holomorphic sections $\tilde{s}_1, \cdots, \tilde{s}_N$ $\in H^0(\pi^{-1} (U ), \CL)$, where $U$ is an open neighbourhood of $ t_0 $ in $C$, such that the restriction map induces an isomorphism $ \mathbb{C} \la\tilde{s}_1, \cdots, \tilde{s}_N \ra \cong H^0(\CX_{t }, \CL_{t } )$ for any $t\in U $.
\end{enumerate}
\end{lem}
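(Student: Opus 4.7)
The plan is to derive (2) from (1) as a direct application of Grauert's theorem above, and to recover (1) from (2) by a short local-constancy argument. No serious analytic obstacle is anticipated: the lemma is essentially a repackaging of the content of Theorem \ref{Grauertuppersemicontinuous} in terms of concrete sections.

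For the implication (1) $\Rightarrow$ (2), fix $t_0 \in C$. By Theorem \ref{Grauertuppersemicontinuous}, the assumption that $h^0(\CX_t, \CL_t) \equiv N$ ensures that $\pi_*\CL$ is locally free on $C$ and that for every $t$ the restriction map induces an isomorphism $(\pi_*\CL)_t / \mathfrak{m}_t (\pi_*\CL)_t \cong H^0(\CX_t, \CL_t)$. Shrink $C$ to an open neighbourhood $U$ of $t_0$ on which $\pi_*\CL|_U \cong \CO_U^N$, and pick sections $\tilde{s}_1, \ldots, \tilde{s}_N \in (\pi_*\CL)(U) = H^0(\pi^{-1}(U), \CL)$ corresponding to a trivialising frame. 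At each stalk $(\pi_*\CL)_t$ these sections form an $\CO_{C,t}$-basis, so by Nakayama their reductions modulo $\mathfrak{m}_t$ are a $\IC$-basis of the quotient. Through the base-change isomorphism above, this basis is precisely $\{\tilde{s}_j|_{\CX_t}\}_{j=1}^N \subset H^0(\CX_t, \CL_t)$, which gives the desired isomorphism $\IC\la \tilde{s}_1, \ldots, \tilde{s}_N \ra \cong H^0(\CX_t, \CL_t)$ for every $t \in U$.

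For the converse (2) $\Rightarrow$ (1): given $t_0 \in C$ and sections as in (2), the restrictions $\tilde{s}_1|_{\CX_t}, \ldots, \tilde{s}_N|_{\CX_t}$ form a basis of $H^0(\CX_t, \CL_t)$ for every $t \in U$, hence $h^0(\CX_t, \CL_t) = N$ throughout $U$. Thus $t \mapsto h^0(\CX_t, \CL_t)$ is locally constant on $C$, and as an integer-valued locally constant function it is constant on each connected component of $C$.

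The one delicate point is the passage, for fixed $t_0$, from a local frame of $\pi_*\CL$ to sections that restrict to bases of $H^0(\CX_t, \CL_t)$ simultaneously for \emph{every} nearby $t$; this is exactly where the hypothesis of constant $h^0$ (rather than mere upper semi-continuity) is indispensable, as it is precisely the content of Theorem \ref{Grauertuppersemicontinuous} that the base-change isomorphism holds at every fiber once $h^0$ is constant. Everything else is a formal manipulation with Nakayama's lemma and the identification $(\pi_*\CL)(U) = H^0(\pi^{-1}(U), \CL)$.
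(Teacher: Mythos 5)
Your proof is correct and follows essentially the same route as the paper: invoke Theorem \ref{Grauertuppersemicontinuous} to get local freeness of $\pi_*\CL$ and the fiberwise base-change isomorphism, take a trivializing frame over a small $U$, and observe (via Nakayama) that its restriction gives a basis of $H^0(\CX_t,\CL_t)$ for every $t\in U$; the converse is the same trivial local-constancy observation the paper makes. The only difference is that you spell out the Nakayama step and the converse direction, which the paper leaves implicit.
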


\begin{proof}
$(2) \Rightarrow (1) $ is trivial. Now we consider $(1) \Rightarrow (2) $. 

Assume that $h^0 (\CX_t ,\CL_t ) $ is constant. By Theorem \ref{Grauertuppersemicontinuous}, we see that $\pi_*\CL $ is a holomorphic vector bundle on $C$, and for any $ t\in C$, the restriction map gives an isomorphism $ (\pi_*\CL)_t / \mathfrak{m}_t (\pi_*\CL)_t \cong H^0(\CX_t, \CL_t)$. Fix $t_0 \in C $. Then there exist an open neighbourhood $U $ of $t_0 $ and a frame $ \tilde{s}_1, \cdots, \tilde{s}_N$ $\in H^0( U , \pi_* \CL) \cong H^0(\pi^{-1} (U ), \CL) $. Since the restriction map $ (\pi_*\CL)_t / \mathfrak{m}_t (\pi_*\CL)_t \to H^0(\CX_t, \CL_t)$ are isomorphisms, we conclude that for any $t\in U$, the restriction map induces an isomorphism $ \mathbb{C} \la\tilde{s}_1, \cdots, \tilde{s}_N \ra \cong H^0(\CX_{t }, \CL_{t } )$.
\end{proof}

In order to demonstrate  Theorem \ref{maintheorem}, it is necessary to utilize the subsequent lemma.

\begin{lem} 
\label{gluing lemma}
Let $\CX$ be a normal complex space, $\omega $ be a smooth Hermitian metric on $\CX_\reg$, and $\pi:\CX\to C$ be a surjective proper flat morphism with reduced general fibers, where $C$ is a Riemann surface. Let $\CX_{t_0} =\sum_{j=1}^q m_j Y_j$, where $t_0 \in C $, and $Y_j$ are the irreducible components of $\CX_{t_0} $. Let $K$ be a compact subset of $ \CX_{0,\red,\reg }\setminus \CX_\sing $. Then there exists an open neighbourhood $\D_{t_0 } $ of $t_0 \in C$ satisfying the following properties. 

For any $t\in\D_{t_0} $, there exist an open subset $\Omega_{K,t} \subset \CX_{t,\red,\reg }\setminus \CX_\sing $ and a non-degenerate $C^\infty $ map $F_{K,t} : \Omega_{K,t} \to \CX_{0,\red,\reg }\setminus \CX_\sing $ such that the image of $F_{K,t}$ containing $K$, $\sup_{x\in \Omega_{K,t} } d_{\omega } (x,F_{K,t} (x) ) \to 0 $ as $t\to t_0 $, $ 
\Vert F_{K,t}^{*} \omega_{t_0} - \omega_t \Vert_{C^m ,\omega_t} \to 0 $ as $t\to t_0$, $\forall m\in\mathbb{N} $, and $F^{-1}_{K,t} (x) $ has $m_j$ points when $x\in Y_j \cap K $, $j=1,\cdots ,q$, where $\omega_t $ is the restriction of $\omega $ on $\CX_{t,\red,\reg }\setminus \CX_\sing $.
\end{lem}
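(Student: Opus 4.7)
The plan is to produce $F_{K,t}$ from an explicit local holomorphic normal form for $\pi$ along each component $Y_j$, glued together by a tubular retraction of $Y_j$ in $\CX_\reg$ with respect to the Riemannian metric determined by $\omega$.

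First I would decompose $K=\bigsqcup_{j=1}^q K_j$ with $K_j:=K\cap Y_j$; since $K$ lies in the reduced regular locus and avoids $\CX_\sing$, each $K_j$ is compact in the honest complex manifold $Y_j^\circ:=Y_{j,\reg}\setminus(\CX_\sing\cup\bigcup_{j'\neq j}Y_{j'})\subset\CX_\reg$. For each $x_0\in K_j$ choose holomorphic coordinates $(z_1,\dots,z_n)$ on a polydisc $V_{x_0}\subset\CX_\reg$ centred at $x_0$ with $Y_j\cap V_{x_0}=\{z_1=0\}$. Flatness of $\pi$ together with the divisorial identity $\CX_{t_0}=\sum_j m_j Y_j$ forces $\pi-t_0=u(z)\,z_1^{m_j}$ with $u$ nowhere vanishing; absorbing an $m_j$-th root of $u$ into $z_1$ reduces $\pi$ to the normal form $\pi=t_0+z_1^{m_j}$. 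In these coordinates, for $t$ near $t_0$ the set $\CX_t\cap V_{x_0}$ is the disjoint union of $m_j$ smooth graphs $\{z_1=\zeta^k(t-t_0)^{1/m_j}\}_{k=0,\dots,m_j-1}$ over $(z_2,\dots,z_n)$, and $(z_1,\dots,z_n)\mapsto(0,z_2,\dots,z_n)$ realises a local $m_j$-to-one covering of $\CX_t\cap V_{x_0}$ onto $Y_j\cap V_{x_0}$.

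To turn these local projections into a single smooth map, I would equip $\CX_\reg$ with the Riemannian metric $g_\omega$ induced by $\omega$ and take the normal exponential map of $Y_j^\circ$. Pick relatively compact neighbourhoods $K_j\Subset\widetilde K_j\Subset Y_j^\circ$; for some uniform $\varepsilon_0>0$, the $\varepsilon_0$-tube $T_j$ about $\widetilde K_j$ inside $\CX_\reg$ carries a smooth retraction $r_j:T_j\to\widetilde K_j$, and after shrinking the $T_j$ may be taken pairwise disjoint. Define $\Omega_{K,t}:=\bigsqcup_{j}\bigl(r_j^{-1}(\widetilde K_j)\cap\CX_t\bigr)$ and $F_{K,t}:=r_j$ on the $j$-th piece. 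In each chart $V_{x_0}$, the retraction $r_j$ differs from the model projection $(z_1,\dots,z_n)\mapsto(0,z_2,\dots,z_n)$ by a smooth correction that vanishes to first order along $\{z_1=0\}$; combined with the fact that the sheets of $\CX_t$ lie within $O(|t-t_0|^{1/m_j})$ of $\{z_1=0\}$, this forces the restriction $F_{K,t}|_{\CX_t\cap T_j}$ to be, for $t$ sufficiently close to $t_0$, a $C^\infty$ non-degenerate $m_j$-to-one covering of $\widetilde K_j$, whose image therefore contains $K_j$.

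Finally I would verify the two convergence statements chart by chart. Each sheet sits at $g_\omega$-distance $O(|t-t_0|^{1/m_j})$ from $Y_j^\circ$, so $\sup_{x\in\Omega_{K,t}}d_\omega(x,F_{K,t}(x))\to 0$. For the $C^m$ bound, parameterise a sheet by $(z_2,\dots,z_n)\mapsto(\zeta^k(t-t_0)^{1/m_j},z_2,\dots,z_n)$: then $\omega_t$ corresponds to the coefficients of $\omega$ evaluated at $z_1=\zeta^k(t-t_0)^{1/m_j}$, while $F_{K,t}^*\omega_{t_0}$ corresponds to those at $z_1=0$, and smoothness of $\omega$ on $\CX_\reg$ gives $C^m$-convergence uniformly in $x$ as $|t-t_0|^{1/m_j}\to 0$. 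The main obstacle is the middle step: proving uniformly over the compact set $K$ that the smoothly defined exponential retraction genuinely implements the holomorphic $m_j$-to-one covering, which is where compactness of $K$, a uniform choice of tubular radius $\varepsilon_0$, and the first-order agreement between $r_j$ and the model projection must be combined carefully.
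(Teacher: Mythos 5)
Your proposal is correct in its main line and reaches the same conclusion, but it takes a genuinely different route from the paper. The paper's proof starts from the same local holomorphic normal form $\pi - t_0 = z_1^{m_j}$ and the same local sheet-to-$Y_j$ biholomorphisms, but then \emph{globalizes by gluing}: it fixes a locally finite system of coordinate charts, orders them, and inductively interpolates the local maps with a partition of unity, using the observation that overlapping local biholomorphisms become ``arbitrarily close'' in $C^m$ as $t \to t_0$ to control the glued map. Your proposal instead globalizes \emph{geometrically}: after the same normal-form reduction, you take the normal exponential retraction $r_j$ of $Y_j^\circ$ inside $(\CX_\reg,g_\omega)$ over a precompact $\widetilde K_j\Supset K_j$ and simply restrict it to the nearby fiber sheets. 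This buys you a single globally defined smooth map with no consistency-of-gluing step, and the $C^m$-convergence of $F_{K,t}^*\omega_{t_0}-\omega_t$ then falls out of smoothness of $r_j$ together with Taylor expansion in the normal direction, exactly as you sketch. The price is the extra Riemannian machinery (choosing a uniform injectivity radius $\varepsilon_0$ over compacts, checking nondegeneracy of $r_j$ restricted to sheets via the inverse function theorem uniformly over $\overline{\widetilde K_j}$), which is standard but must be carried out; the paper's chart-gluing avoids any appeal to the exponential map at the cost of the bookkeeping in the induction. The one place you should be explicit: $\Omega_{K,t}=r_j^{-1}(\widetilde K_j)\cap\CX_t$ must land in $\CX_{t,\red,\reg}\setminus\CX_\sing$, which follows because $T_j\subset\CX_\reg$ and, in the normal-form charts covering the compact $\overline{\widetilde K_j}$, the nearby fibers are smooth graphs; you assert this implicitly but it is worth recording since $\CX_\reg$ alone does not force $\CX_t$ to be smooth.
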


Loosely speaking, this means that there are maps $F_{K,t}$ between big open subsets of the general fibers and the central fiber inducing metric convergence as $t\to 0$. 

\begin{proof}
Similar to the argument in \cite[Theorem 11.3.6 (C)]{pet16}, we use the standard gluing program to prove the lemma.

The question is local on $C$, we just assume that $C=\D$, $t_0 =0$, and $\CX_t$ is reduced for all $t\in \D^\circ=\D\setminus \{0\}$. 
Let $\CX_{0} = \cup_{j=1}^q Y_j$ be the irreducible decomposition. Since $\pi$ is flat, we see that $Y_j$ are prime divisors on $\CX$. Note that $\CX_0$ may be non-reduced. We may assume that $Y_j$ has multiplicity $m_j$ in $\CX_0$.

Let $Y_{j,\reg}:=Y_j\setminus (\CX_{0,\red,\sing }\cup \CX_\sing)$. Since $\CX$ is normal, $Y_{\reg} = \CX_{0,\red,\reg }\setminus \CX_\sing = \amalg_j Y_{j,\reg}$ is a disjoint union of open dense subsets of $\CX_{0,\red}$. 

For any point $y\in Y_{j,\reg}$, there exist a coordinate chart $U_y$ of $y$ in $\CX$ with coordinates $z_0,\cdots,z_n$ such that $\pi(z_0,\cdots, z_n)=z_0^{m_j}$. Hence $U_y \cap \CX_t=\amalg_{k=1}^{m_j} U_{y,t}^k$ is a disjoint union of $m_j$ open subsets which are biholomorphic to each other, and we have biholomorphic maps $f_{t}^k :U_{y,t}^k \to U_{y,0} = U_y\cap Y,\; 1\le k\le m_j$ for $t\in\D^\circ $ with sufficiently small norm. We have the metric convergence $\lVert f_t^{k*}\omega_t-\omega_0 \rVert_{C^m ,\omega_t}\to 0$ as $t\to 0$ for any $m\ge2$. 

For another coordinate chart $V$ of $y$ in $\CX$, similarly we have $V\cap \CX_t=\amalg_{k=1}^{m_j}V_t^k$ and biholomorphic maps $g_{t}^k :V_t^k\to V_0, 1\le k\le m_j$. We may assume that $U_t^k\cap V_t^k\ne \varnothing$. The restrictions $f_t^k|_{U_t^k\cap V_t^k}$ and $g_t^k|_{U_t^k\cap V_t^k}$ may be different. Instead, we have $\lVert f_t^k-g_t^k\rVert_{C^\infty } \to 0$ as $t\to 0$. More explicitly
$$\lVert \zeta^{-1}\circ f_t^k|_{U_t^k\cap V_t^k}\circ\eta 
 - \zeta^{-1}\circ g_t^k|_{U_t^k\cap V_t^k}\circ \eta \rVert_{C^{m+1} , \omega_{\rm Euc} } \to 0, $$
as $t\to 0$, for any $m\ge2$, and some (hence any) holomorphic coordinate charts $\eta: \Delta^{n}\to U_t^k\cap V_t^k$ and $\zeta: \D^n\to U_0\cap V_0$. We just say that $f_t^k$ and $g_t^k$ are {\it arbitrarily close}.

We may choose a sequence of such coordinate charts $\{U_s^j\}_{1\le j\le q, s\in \IN}$ with corresponding biholomorphic maps $f_{ts}^{jk}:U_{ts}^{jk} \to U_{0s}^{j}$ where $U_{0s}^{j}=U_s^j\cap Y_j$ and $U_{ts}^{jk}\seq U_s^k\cap \CX_t$ inducing the metric convergence $\lVert f_{ts}^{jk*}\omega_0-\omega_t\rVert_{C^m} \to 0$. We denote by 
$$\Omega_{l}^{j}=\cup_{s=1}^l U_{s}^j \seq \CX, \quad
\Omega_{0l}^{j}=\cup_{s=1}^l U_{0s}^j \seq Y_j, \quad
\Omega_{tl}^{jk}=\cup_{s=1}^l U_{ts}^{jk}\seq \CX_t$$
the corresponding ascending chains of open subsets, and assume that $K\subset 
\cup_{j=1}^q \Omega_{0l}^j $ for some $l\in\mathbb{N}$. We may assume that, for any $l\in\IN$, the open subsets $\Omega_l^j (1\le j\le q)$  are disjoint. We may also assume that, there exist $\delta >0$, for any $j,l$ and $t$ with $|t|\le\delta$, $\Omega_{tl}^{jk} (1\le k\le m_j)$ are disjoint.

For any given integer $1\le j\le q$, we may choose the lexicographic order on the parameter set $\{(s,k)\}_{1\le k\le m_j, s\in \IN}$ of $\{ f_{ts}^{jk} \}_{1\le k\le m_j, s\in \IN}$, that is, $(s,k)>(s',k')$ if $s>s'$ or $s=s', k>k'$. We denote by $\tilde{\Omega}_{tl}^{jk}=\cup_{(s,p)\le(l,k)}U_{ts}^{jp}$. Note that $\tilde{\Omega}_{tl}^{jm_j} = \cup_{k=1}^{m_j}\Omega_{tl}^{jk}$. 

Now we define the maps $\tilde{F}_{tl}^{jk}:\tilde{\Omega}_{tl}^{jk}\to \Omega_{tl}^{j}$ by induction on $(l,k)$, whose restriction on $\Omega_{tl}^{jk}$ will be the $F_{tl}^{jk}$. For $(l,k)=(1,1)$ we simply define $F_{t1}^{j1}=f_{t1}^{j1}$. Suppose we have defined $\tilde{F}_{tl_0}^{jk_0}$ for all $(l_0,k_0)<(l,k)$ for $|t|<\delta$, where $\delta>0$ is a constant depend on $(l,k)$. We denote by $(l',k')$ the pair one less than $(l, k)$. 
If $U_{tl}^{jk}\cap\tilde{\Omega}_{tl'}^{jk'} = \varnothing$, then we just define $\tilde{F}_{tl}^{jk}=\tilde{F}_{tl'}^{jk'}$ on $\tilde{\Omega}_{tl'}^{jk'}$ and $\tilde{F}_{tl}^{jk}=f_{tl}^{jk}$ on $U_{tl}^{jk}$. 
If $U_{tl}^{jk}\seq\tilde{\Omega}_{tl'}^{jk'}$, we define $\tilde{F}_{tl}^{jk}=\tilde{F}_{tl'}^{jk}$. Otherwise, we define $\tilde{F}_{tl}^{jk}$ on $\tilde{\Omega}_{tl}^{jk}=U_{tl}^{jk}\cup \tilde{\Omega}_{tl'}^{jk'}$ by 
$$\tilde{F}_{tl}^{jk}(x)=\varphi_{0l}^{j}\circ\big(\mu_1(x)\cdot (\varphi_{0l}^{j})^{-1}\circ f_{tl}^{jk}(x)+\mu_2(x)\cdot (\varphi_{0l}^{j})^{-1}\circ \tilde{F}_{tl'}^{jk'}(x) \big), $$
where $\{\mu_1, \mu_2\}$ is a partition of unity for $U_{tl}^{jk}, \tilde{\Omega}_{tl'}^{jk'}$, and $\varphi_{0l}^j:\D^n\to U_{0l}^j$ is a coordinate chart. We simply write
\begin{displaymath}
\left\{ \begin{array}{ccc}
F_0
&:=& (\varphi_{0l}^{j})^{-1}\circ \tilde{F}_{tl}^{jk} \circ \varphi_{tl}^{jk}, \\
F_1
&=& (\varphi_{0l}^{j})^{-1}\circ f_{tl}^{jk} \circ \varphi_{tl}^{jk}, \\
F_2
&:=& (\varphi_{0l}^{j})^{-1}\circ \tilde{F}_{tl'}^{jk'} \circ \varphi_{tl}^{jk}, \\
\end{array} \right.
\quad
\left\{ \begin{array}{ccc}
\tilde{\mu}_1
&:=& \mu_1\circ\varphi_{tl}^{jk}, \\
\tilde{\mu}_2
&:=& \mu_2\circ\varphi_{tl}^{jk}, 
\end{array}\right. 
\end{displaymath}
where $\varphi_{tl}^{jk}:\D^n\to U_{tl}^{jk}$ is a coordinate chart. 
Hence $F_0=\tilde{\mu}_1F_1 + \tilde{\mu}_2F_2$, and
\begin{eqnarray*}
\lVert F_0 - F_1 \rVert_{C^{m+1} ,\omega_t }
&=& \rVert \tilde{\mu}_2(F_2-F_1)\rVert_{C^{m+1} ,\omega_t } \\
&\le& C_1 \cdot \lVert\tilde{\mu}_2\rVert_{C^{m+1}}\cdot \lVert F_2-F_1\rVert_{C^{m+1} ,\omega_t }, 
\end{eqnarray*}
where $C_1 =C_1 (m) >0 $ is a constant. 
Since $f_{ts}^{jp} ((1,1)\le (s,p) \le (l',k'))$ are arbitrarily close, we see that $F_{tl'}^{jk'}$ and $f_{tl}^{jk}$ are arbitrarily close. Hence $\lVert F_2-F_1\rVert_{C^{m+1} ,\omega_t}\to 0$ as $t\to 0$. We conclude that $F_{tl}^{jk}$ is arbitrarily close to $f_{ts}^{jp}$ for $(1,1)\le (s,p) \le (l,k)$. Let $F_{K,t}$ be a holomorphic map on $ \Omega_{K,t} = \cup_{j=1}^{q} \tilde{\Omega}_{tl}^{jk}$ such that $F_{K,t} =F_{tl}^{jk} $ on $\tilde{\Omega}_{tl}^{jk}$. By the construction, one can easily see that $\sup_{x\in \Omega_{K,t} } d_{\omega } (x,F_{K,t} (x) ) \to 0 $ as $t\to 0 $. Then the proof is complete.
\end{proof}

\begin{rmk} \rm
By the argument above, for any $m\in\mathbb{Z}_{\geq 0} $ and $\alpha \in [0,1 ]$, we conclude that $ \Vert F_{K,t}^{*} \omega_{t_0} - \omega_t \Vert_{C^{m,\alpha } ,\omega_t} \to 0 $ as $t\to t_0$ when $\omega $ is a $C^{m,\alpha}$ 2-form on $\CX $ such that the restriction $\omega_t $ is a $C^{m,\alpha}$-Hermitian metric on $\CX_t $.
\end{rmk}

We are ready to prove Theorem \ref{maintheorem}. 

\begin{thm}[={Theorem \ref{maintheorem}}]
Let $\pi:\CX\to C$ be a proper surjective morphism with reduced general fibers between normal complex spaces. Suppose that $\CL$ is a line bundle on $\CX$ equipped with a continuous Hermitian metric $h$, and $\omega$ is a continuous $2$-form on $\CX$ whose restriction to each fiber is a Hermitian form. We assume that either $C$ is a Riemann surface or all fibers of $\pi$ are reduced. 
Then the following statements are equivalent:
\begin{enumerate}
\item The function $h^0(\CX_t, \CL_t)=\dim H^0(\CX_t, \CL_t)$ is constant for $t\in C$.
\item The fibrewise Bergman kernel function $\rho_{\CL_{\pi (x)} } (x)$ is continuous on $\CX$.
\end{enumerate}
\end{thm}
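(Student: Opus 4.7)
The first direction (2) $\Rightarrow$ (1) is already contained in Corollary \ref{bgmc0imH0constant}: since $\int_{\CX_t}\rho_{\CL_t}\omega_t^n = h^0(\CX_t,\CL_t)$, Theorem \ref{king continuous thm} shows the right-hand side is a continuous $\IZ$-valued function of $t$, hence locally constant on $C$. The substance of the theorem therefore lies in (1) $\Rightarrow$ (2).

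The assertion is local on $C$, so the plan is to fix $t_0\in C$ and verify continuity of $\rho_{\CL_{\pi(x)}}$ at every point of $\CX_{t_0}$. In the Riemann-surface case the non-reduced locus is discrete, so I may shrink to $C=\D$ centered at $t_0=0$ with $\CX_t$ reduced for $t\neq 0$ (the ``all fibers reduced'' case is the special instance below in which all $\lambda(i)$ vanish and the argument simplifies). Applying Lemma \ref{Grauertlemma} to hypothesis (1) produces global sections $\tilde s_1,\dots,\tilde s_N\in H^0(\pi^{-1}(\D),\CL)$ whose restrictions form a basis of each $H^0(\CX_t,\CL_t)$. After a linear change of basis, arrange that $\lambda(i):=\ord_0(s_i)\in\{\lambda_1,\dots,\lambda_l\}$, where $s_i=\tilde s_i|_{\CX_0}$, and that the classes $\{[s_i]\}$ provide a basis of $\gr_\CF H^0(\CX_0,\CL_0)$ compatible with the grading.

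The crux is to show that the rescaled Gram matrix
\begin{displaymath}
G(t)_{ij}\;:=\;|t|^{-\lambda(i)-\lambda(j)}\int_{\CX_t}\la\tilde s_i,\tilde s_j\ra_h\,\omega_t^n,\qquad t\in\D^\circ,
\end{displaymath}
extends continuously across $t=0$ to the block-diagonal positive definite matrix $G(0)_{ij}=\la[s_i],[s_j]\ra_{L^2;\CX_0}$ on $\gr_\CF H^0(\CX_0,\CL_0)$. When $\lambda(i)=\lambda(j)=\lambda$, the integrand equals $\tilde h_{0,\lambda}(\tilde s_i,\tilde s_j)=|\pi|^{-2\lambda}\la\tilde s_i,\tilde s_j\ra_h$, a continuous function on $\pi^{-1}(\D)$, and Theorem \ref{king continuous thm} delivers continuity with the correct value at $t=0$. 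When $\lambda(i)\neq\lambda(j)$, I would use the local model from Lemma \ref{gluing lemma} near each regular point of $Y_{k,\reg}$, where $\pi$ is presented as the $m_k$-fold branched cover $z\mapsto z^{m_k}$; the leading contribution to the local integral acquires a factor $\sum_{\zeta^{m_k}=1}\zeta^{a_{ik}-a_{jk}}$ with $a_{ik}=\ord_{Y_k}(\tilde s_i)$, which vanishes because $a_{ik},a_{jk}\in[0,m_k)$ are distinct, while subleading corrections are $O(|t|^\varepsilon)$. This matches the graded orthogonality $\la[s_i],[s_j]\ra_{L^2;\CX_0}=0$.

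Granted continuity and invertibility of $G(t)$, the Bergman kernel has the expansion
\begin{displaymath}
\rho_{\CL_t}(x)\;=\;\sum_{i,j}(G(t)^{-1})_{ij}\cdot|t|^{-\lambda(i)-\lambda(j)}\la\tilde s_i,\tilde s_j\ra_h(x),\qquad x\in\CX_t,\ t\neq 0,
\end{displaymath}
obtained by writing out an $L^2$-orthonormal basis $\{e_j(t)\}$ in terms of the $\tilde s_i$. The same-grade terms pass to the limit by continuity of the $\tilde h_{0,\lambda}$; the different-grade terms have $(G(t)^{-1})_{ij}=O(|t|^\varepsilon)$ by block inversion, while the factor $|t|^{-\lambda(i)-\lambda(j)}\la\tilde s_i,\tilde s_j\ra_h(x)$ remains bounded on $\CX_t$ by the same local cover computation, so these vanish as $t\to 0$. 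Comparing with the analogous expression of $\rho_{\CL_0}(x_0)$ in terms of $G(0)^{-1}$ on $\gr_\CF H^0(\CX_0,\CL_0)$ gives $\rho_{\CL_{\pi(x)}}(x)\to\rho_{\CL_0}(x_0)$, establishing continuity. The main obstacle is precisely the off-diagonal vanishing $G(t)_{ij}\to 0$ for $\lambda(i)\neq\lambda(j)$: the hard part is promoting the sheet-wise phase cancellation in the local branched cover to a global integral estimate, for which the fine metric control from Lemma \ref{gluing lemma} is essential.
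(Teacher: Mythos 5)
Your proposal reproduces the paper's proof, but with two points articulated more carefully than the paper does. First, for the cross terms $\lambda(i)\neq\lambda(j)$, the paper simply asserts that the rescaled local sum $\psi_x(t)=|t|^{-m_j^{-1}(\ord_{Y_j}(s_{i,0})+\ord_{Y_j}(s_{k,0}))}\sum_{y\in F^{-1}_{K,t}(x)}h(s_{i,t}(y),s_{k,t}(y))$ ``can be extended to be continuous at $0$''; you correctly identify that the mechanism is the vanishing of the root-of-unity sum $\sum_{\zeta^{m_j}=1}\zeta^{a_{ik}-a_{jk}}$, without which the expression would retain a dependence on $\arg t$ and fail to converge. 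Second, the paper's closing formula $\rho=\sum_{i,j,k}b_{ki}\overline{b_{kj}}h(\tilde s_i,\tilde s_j)$ is stated after a Gram--Schmidt in which the $b_{ij}$ are declared continuous with $b_{ij}(0)=\delta_{ij}$; read literally this cannot orthonormalize the raw Gram matrix $A(t)=\int_{\CX_t}h(\tilde s_i,\tilde s_j)\omega_t^n$, which degenerates as $t\to 0$. The orthonormalization is really applied to the rescaled matrix $G(t)=D(t)^{-1}A(t)D(t)^{-1}$ with $D(t)=\mathrm{diag}(|t|^{\lambda(i)})$, and the Bergman kernel is $\rho(x)=\sum_{i,j}(G(t)^{-1})_{ij}|t|^{-\lambda(i)-\lambda(j)}\langle\tilde s_i,\tilde s_j\rangle_h(x)$ exactly as you write — the paper's formula is missing the $|t|^{-\lambda(i)-\lambda(j)}$ factors. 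One small overclaim on your part: you do not need $(G(t)^{-1})_{ij}=O(|t|^\varepsilon)$ for off-block $(i,j)$; the argument only requires $(G(t)^{-1})_{ij}\to 0$, which follows from continuity of matrix inversion once $G(0)$ is block-diagonal and positive definite, together with boundedness of $|t|^{-\lambda(i)-\lambda(j)}\langle\tilde s_i,\tilde s_j\rangle_h$ near $\CX_0$. That boundedness, as well as the continuity of $\tilde h_{0,\lambda}(\tilde s_i,\tilde s_j)=|\pi|^{-2\lambda}\langle\tilde s_i,\tilde s_j\rangle_h$ across the singular locus that your invocation of Theorem \ref{king continuous thm} requires, is a consequence of normality of $\CX$ (a suitable power of $f_i/\pi^{\lambda(i)}$ is a rational function with effective divisor, hence regular), a step that neither you nor the paper spells out but which is routine.
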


\begin{proof}
$(2) \Rightarrow (1) $ follows from Corollary \ref{bgmc0imH0constant}. Now we prove $(1) \Rightarrow (2) $.

We first consider the case that all the fibers of $\pi$ are reduced. Fix $t_0 \in C $. Since $h^0(\CX_t, \CL_t)=\dim H^0(\CX_t, \CL_t)$ is constant, we can use Lemma \ref{Grauertlemma} to find a neighbourhood $\CU $ of $t_0 \in C $ and holomorphic sections $\tilde{s}_1, \cdots, \tilde{s}_N$ $\in H^0(\pi^{-1} (\CU ), \CL)$ such that for any $t\in\CU $, the restriction map induces an isomorphism $ \mathbb{C} \la\tilde{s}_1, \cdots, \tilde{s}_N \ra \cong H^0(\CX_t, \CL_t )$.

By taking $f_{ij}=h( \tilde{s}_i, \tilde{s}_j)$, which is continuous on $\CX$, we see that the functions
$a_{ij}(t)=\int_{\CX_t} f_{ij} \omega_t^n$
are continuous for any $1\le i,j\le N$ by Proposition \ref{king continuous thm}. Clearly, the isomorphism $ \mathbb{C} \la\tilde{s}_1, \cdots, \tilde{s}_N \ra \cong H^0(\CX_{t }, \CL_{t } )$ implies that the matrices $(a_{i,j} (0) )_{1\leq i,j\leq N}  > 0$ on $\CU$. By the Schmit orthogonalization program, we can find continuous functions $b_{ij}(t)$ on $\CU$ such that $\{\hat{s}_{i}=\sum_j b_{ij}(t) \tilde{s}_j\}_{1\le i\le N}$ is an $L^2$-orthonormal basis of $H^0(\CX_t, \CL_t)$ for any $t\in \CU $. We conclude that the function
$$\rho_{L_{\pi (x)}}(x)=\sum_kh(\hat{s}_k(x),\hat{s}_k(x))=\sum_{i,j,k}b_{ki}(\pi(x))\overline{b_{kj}(\pi(x))} h(\tilde{s}_i(x),\tilde{s}_j(x))$$
is continuous on $\CU $, which proves Theorem \ref{maintheorem} in the case $(\alpha)$.

Next we consider the case that $C$ is a Riemann surface. The argument is almost the same as above, but we need to deal with non-reduced fibers here. The question is local on $C$. We assume that $C=\D$ and $\CX_t$ is reduced for all $t\ne 0$.

By Lemma \ref{Grauertlemma}, the constancy of $h^0(\CX_t, \CL_t)=\dim H^0(\CX_t, \CL_t)$ shows that there are holomorphic sections $\tilde{s}_1, \cdots, \tilde{s}_N$ $\in H^0(\CX, \CL)$ such that for any $t\in\D $, the restriction map induces an isomorphism $ \mathbb{C} \la\tilde{s}_1, \cdots, \tilde{s}_N \ra \cong H^0(\CX_t, \CL_t )$. Note that the question is local on the base, and one can always shrink $\D$ if necessary.

Write $h^0_{0,i} =\dim \CF^{\lam_i}H^0 (\CX_0 ,\CL_0 ) $, where $\CF^{\lam_i}H^0 (\CX_0 ,\CL_0 ) $ is the linear subspace of $H^0 (\CX_0 ,\CL_0 ) $ defined in Section \ref{section_Fiberwise Bergman kernel}. Let $s_{j,t}$ denotes the restriction of $s_j$ on $\CX_t$. By taking a linear transformation, we assume that for any $ j\leq h_{0,i}^0 $, the 
restriction $s_{j,0} = \tilde{s}_j |_{\CX_0 } \in \CF^{\lam_i}H^0 (\CX_0 ,\CL_0 ) $, and the set $\{ s_{j,0} \}_{j= h_{0,i+1}^0 +1}^{h_{0,i }^0 } $ gives an $L^2$ orthonormal basis of $\gr_\CF^{\lam_i} H^0 (\CX_0 ,\CL_0 ) = \CF^{\lam_i}H^0 (\CX_0 ,\CL_0 ) /H_{\lambda_{i+1}}^0 (\CX_0 ,\CL_0 ) $. Hence $\{ s_{j,0} \}_{j =1 }^N $ gives an $L^2$ orthonormal basis of $\gr H^0 (\CX_0 ,\CL_0 ) $. By definition, we have $\rho_{L_0 } = \sum_{j=1}^{N} h_0 (s_{j,0} ,s_{j,0} ) $.

For $t\in\D$ and $i,k = 1,\cdots ,N$, we define
\begin{displaymath}
a_{ik} (t) = \left\{ \begin{aligned}
|t|^{-\ord_0 (s_{i,0}) - \ord_0 (s_{k,0}) }\int_{\CX_t} h ( s_{i,t} ,s_{k,t} ) \omega_t^n , & \textrm{ if } t\neq 0 ; \\
\int_{\CX_0} h_0 ( s_{i,0} ,s_{k,0} ) \omega_0^n , \;\;\quad\quad\quad & \textrm{ if } t=0 ,
\end{aligned} \right.
\end{displaymath}
where $s_{i,t}$ denotes the restriction of $s_i$ on $\CX_t$.

Since $t$ is continuous, we see that $a_{i,k} $ are continuous functions on $ \D^* = \D \setminus \{ 0 \} $.

By definition, for any given constant $\epsilon >0$, there exists a compact subset $K\subset \CX_{0,\red ,\reg} \setminus \CX_{\sing} $ such that the integral $\int_{ \CX_{0,\red } \setminus K } \omega_0^n < \epsilon $. Now we can apply Lemma \ref{gluing lemma} to find an open neighbourhood $\D_\delta $ of $0\in\D $ such that for any $t\in\D_{\delta} $, there are an open subset $\Omega_{K,t} \subset \CX_{t,\red,\reg }\setminus \CX_\sing $ and a non-degenerate $C^\infty $ map $F_{K,t} : \Omega_{K,t} \to \CX_{0,\red,\reg }\setminus \CX_\sing $ such that the image of $F_{K,t}$ containing $K$, $ \Vert F_{K,t}^{*} \omega^n_{0} - \omega^n_t \Vert_{C^0 ,\omega_t} \to 0 $ as $t\to 0$, and $F^{-1}_{K,t} (x) $ has $m_j$ points when $x\in Y_j \cap K $, $j=1,\cdots ,q$, where $\omega_t $ is the restriction of $\omega $ on $\CX_{t,\red,\reg }\setminus \CX_\sing $. Clearly, for any $t\in\D_\delta $ and continuous function $f$ on $\CX_t $, we have
$$\int_{K} \sum_{y\in F_{K,t}^{-1} (x) } f(y) \omega_0^n (x) = \int_{F_{K,t}^{-1} (K)} f (y) \left( F^{*}_{K,t} \omega_0 \right)^n  (y) .$$

For any $x\in K\cap Y_j$, there exists a coordinate chart $U_x$ of $x$ in $\CX$ with coordinates $z_0,\cdots,z_n$ such that $\pi(z_0,\cdots, z_n)=z_0^{m_j}$. Applying Lemma \ref{gluing lemma} again, one can see that 
$$\mathop{\lim}\limits_{t\to 0} \, \mathop{\sup}\limits_{ y\in F^{-1}_{K,t} (x) } \sum_{i=0}^n |z_i (x) - z_i (y) | =0 .$$
Now we conclude that the function 
$$\psi_x (t) = |t|^{-m_j^{-1} (\ord_{Y_j} (s_{i,0}) + \ord_{Y_j} (s_{k,0}) ) } \sum_{y\in F_{K,t}^{-1} (x) } h(s_{i,t} (y) , s_{k,t} (y) ) $$
can be extended to be a function on $\D_\delta $ such that $\psi_x $ is continuous at $0\in\D $. It follows that
$$ \mathop{\lim}\limits_{t\to 0} |t|^{-\ord_0 (s_{i,0}) - \ord_0 (s_{k,0}) }\int_{F_{K,t}^{-1} (K) } h ( s_{i,t} ,s_{k,t} ) \omega_t^n  = \int_{K} h ( s_{i,0} ,s_{k,0} ) \omega_0^n .$$

Since $\int_{ \CX_{0,\red } \setminus K } \omega_0^n < \epsilon $, we have $\lim_{t\to 0} \int_{ \CX_{t,\red } \setminus F_{K,t}^{-1} (K) } \omega_t^n < \epsilon $, hence there exists a constant $C>0$ which is independent of $\epsilon $, such that 
$$\mathop{\limsup}\limits_{t\to 0} \int_{ \CX_{t,\red } \setminus F_{K,t}^{-1} (K) } |t|^{-\ord_0 (s_{i,0}) - \ord_0 (s_{k,0}) } h ( s_{i,t} ,s_{k,t} ) \omega_t^n < C\epsilon .$$
Letting $\epsilon \to 0 $, we see that $a_{ik}$ are continuous on $\D$. 

As in the proof in case $(\alpha )$, we can use Schmit orthogonalization program to find continuous functions $b_{ij}(t)$ on $\D$ such that the set $\{\hat{s}_{i}=\sum_j b_{ij}(t) \tilde{s}_j\}_{1\le i\le N}$ is an $L^2$-orthonormal basis of $H^0(\CX_t, \CL_t)$ for any $t\in \D $, and $b_{ij} (0) =\delta_{ij} $. It follows that the function
\begin{displaymath}
\rho_{L_{\pi (x)}}(x)  = \left\{ \begin{aligned}
\sum_{i,j,k}b_{ki}(\pi(x))\overline{b_{kj}(\pi(x))} h_{\pi (x)} (\tilde{s}_i(x),\tilde{s}_j(x)) , & \textrm{ if } \pi (x)\neq 0 ; \\
\sum_k h_0 (s_{k,0}(x),s_{k,0}(x)) , \;\;\quad\quad\quad & \textrm{ if } \pi (x)=0 ,
\end{aligned} \right.
\end{displaymath}
is a continuous function on $\CX $, and the proof is completed.
\end{proof}

\section{Uniform convergence of Fubini-Study currents}
\label{section_uniform convergence}

We will prove Theorem \ref{uniformly convergencechap1} in this section. Let $\pi:\CX\to C$ be a proper surjective morphism, where $\CX$ is a normal complex space and $C$ is a Riemann surface (hence $\pi$ is flat by Lemma \ref{lemmariemannsurfaceflat}). Let $\omega $ be a continuous $2$-form on $\CX$ whose restriction on each fiber is a Hermitian metric.
Consider Hermitian line bundles $\{F_j\}_{1\le j\le k}$ on $\CX$ equipped with continuous Hermitian metrics $h_j$. We denote by $h_{j,t}$ the restriction of $h_j$ on $\CX_t$. Assume that $c_1(h_{1,t})\ge \varepsilon\omega_t$ for some uniform constant $\varepsilon>0$ with respect to all $t\in C$, and $c_1(h_{j,t})\ge 0 $ for all $t\in C$ and $2\le j\le k$. For any $1\le j \le k$, let $\{p_{j,m}\}$ be a sequence of positive integers with $\lim_{m\to\infty} \frac{p_{j,m}}{m}=r_j > 0$. Denote $\CL_m = F_1^{p_{1,m}} \otimes \cdots \otimes F_k^{p_{k,m}}$ and $h^{\CL_m}=\prod_{j=1}^k h_{j}^{p_{j,m}}$. The fiberwise Bergman kernel function of $(\CL_m, h^{\CL_m})$ on $\CX$ is simply denoted as $\rho_m(x)=\rho_{\CX_t, \CL_{m,t}}(x)$ for $x\in \CX$ and $t=\pi(x)$. 

\begin{thm} \label{uniformly convergence section 4}
The series of functions 
$$\varphi_m(t):=\int_{\CX_t}\frac{1}{m}
|\log(\rho_m)|\omega_t^n$$
converges to $0$ uniformly on compact subsets of $C$ as $m\to\infty$. 
\end{thm}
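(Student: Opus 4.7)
I would split $|\log\rho_m|=\log^+\rho_m+\log^-\rho_m$ and bound each contribution separately, then combine (i) the continuity of $\rho_m$ from Theorem \ref{maintheorem}, (ii) the deformation technique of Lemma \ref{gluing lemma}, and (iii) classical peak-section estimates together with quasi-plurisubharmonic integrability, to upgrade fiberwise estimates to uniform convergence on compacts.

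For the positive part: fix $K\seq C$ compact. The standard submean-value inequality applied to the plurisubharmonic function $\log|s|_{h^{\CL_m}}^2$ plus a local potential of $h^{\CL_m}$, on Stein charts of $\CX$ covering $\pi^{-1}(K)$, yields $\rho_m(x)\le C_Km^n$ on $\pi^{-1}(K)$. Since $\int_{\CX_t}\omega_t^n$ is continuous hence bounded on $K$ by Theorem \ref{king continuous thm}, this gives $\tfrac{1}{m}\int_{\CX_t}\log^+\rho_m\,\omega_t^n = O(\log m/m)\to 0$ uniformly. To set up the negative part, the hypothesis $c_1(h_{1,t})\ge\varepsilon\omega_t$ makes $F_1$ fiberwise ample near $K$, so for $m\ge m_0(K)$ the bundle $\CL_m$ is $\pi$-ample and $\pi$-very ample; by Serre vanishing and flatness, $h^0(\CX_t,\CL_{m,t})$ is locally constant in $t$ on a neighborhood of $K$, so Theorem \ref{maintheorem} gives continuity of $\rho_m>0$ on $\pi^{-1}(K)$.

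For the negative part: given $\epsilon>0$, Theorem \ref{king continuous thm} produces a compact set $A_\epsilon\seq\pi^{-1}(K)$ of smooth fiber points with $\sup_{t\in K}\int_{\CX_t\setminus A_\epsilon}\omega_t^n<\epsilon$. On $A_\epsilon$, Tian's peak-section construction via Hörmander/Ohsawa--Takegoshi $L^2$-extension, applied fiberwise with constants uniformized across $K$ by Lemma \ref{gluing lemma}, yields a lower bound $\rho_m\ge c_Km^n$ for $m\ge m_1(K,\epsilon)$; hence $\tfrac{1}{m}\int_{A_\epsilon\cap\CX_t}\log^-\rho_m\,\omega_t^n=O(\log m/m)$ uniformly in $t\in K$. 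Outside $A_\epsilon$, the potentials $\tfrac{1}{m}\log\rho_m+\tfrac{1}{m}\phi_{\CL_m}$ are plurisubharmonic on local charts with uniformly bounded Monge--Amp\`ere mass (since $\tfrac{1}{m}c_1(h^{\CL_m})\to\sum_jr_jc_1(h_j)$) and bounded above by $o(1)$ via Step 1. Standard uniform $L^1$-integrability of quasi-psh families then yields $\int_{\CX_t\setminus A_\epsilon}\tfrac{1}{m}|\log\rho_m|\omega_t^n\le C\sqrt{\epsilon}$ uniformly in $m$ and in $t\in K$. Sending first $m\to\infty$ and then $\epsilon\to 0$ completes the proof.

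The main obstacle is the uniform-in-$(t,m)$ peak-section lower bound on $A_\epsilon$: Hörmander's constants depend on fiberwise curvature and metric data that can degenerate near special fibers, so a naive fiberwise argument does not give uniform constants. Lemma \ref{gluing lemma} is exactly what is needed: its $C^\infty$-close identifications $F_{K,t}:\Omega_{K,t}\to\CX_{0,\red,\reg}\setminus\CX_\sing$ transport the $L^2$-extension data from a reference fiber uniformly across a neighborhood of the central fiber, which is exactly what the uniform peak estimate requires. A secondary subtlety is to ensure the equi-integrability estimate on $\CX_t\setminus A_\epsilon$ is truly uniform in $t$, which uses the continuity of $\rho_m$ on the total space together with King's theorem to pass masses continuously across fibers.
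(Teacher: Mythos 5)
Your overall plan—separate the contribution of $\log^+\rho_m$ from that of $\log^-\rho_m$, use a Tian-Yau-Zelditch-type upper bound for the positive part, and use an $L^2$-extension argument for the lower bound on a large compact piece of $\CX_\reg$—shares the skeleton of the paper's four-step proof, and your invocation of Lemma \ref{gluing lemma} to transport reference data between fibers is in the right spirit. However, there are two real gaps.

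The decisive one is in your treatment of $\CX_t\setminus A_\epsilon$. You assert that ``standard uniform $L^1$-integrability of quasi-psh families'' yields $\int_{\CX_t\setminus A_\epsilon}\frac{1}{m}|\log\rho_m|\,\omega_t^n\le C\sqrt\epsilon$ uniformly in $m$ and $t$. The standard Hörmander/Demailly compactness results for quasi-psh families live on a \emph{fixed} domain (or a fixed compact complex manifold) and control $L^1$-norms only on compact subsets of that domain, in terms of a normalization on a slightly larger compact. Here the domain $\CX_t\setminus A_\epsilon$ is a neighbourhood of the fiber singularities and it varies with $t$, and near a non-reduced central fiber the local biholomorphism type of the fiber jumps, so there is no ambient fixed chart on which these uniform constants live. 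This is precisely the difficulty the paper isolates and attacks in Step~4: it replaces the soft equi-integrability claim by the explicit resolution-of-singularities estimate of Lemma \ref{lemmalogfintegral}, which bounds $\int_{\CX_t\cap U_\delta}|\log\|s\||\,\omega_t^n$ uniformly in $t$ for a fixed holomorphic section $s$ whose zero set hits $\CX_{t_0}$ in codimension one. It then covers $\CL_m$ by finitely many auxiliary bundles $\CL_\xi$ (with $\xi$ in the lattice boxes $A_j$) each admitting such a section $s_\xi$, and uses the subadditivity of $\log\rho$ under tensor products to propagate the integral bound to $\rho_m$ for all large $m$. Your argument has no substitute for this mechanism, so the $C\sqrt\epsilon$ claim near the singular fibers is unsupported as written.

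Second, your set-up invokes Theorem \ref{maintheorem} (via Serre vanishing and local constancy of $h^0(\CX_t,\CL_{m,t})$ for $m$ large) to get continuity of $\rho_m$ on $\pi^{-1}(K)$; this is not used in the paper's proof of the present theorem and it is not free: Serre vanishing gives a threshold $m_0(t)$ depending on the fiber, and making it uniform on $K$ is exactly the content of a separate argument (Theorem \ref{relamplethm}, and even that is stated under compactness of $C$). The paper's proof of the upper bound near $\CX\setminus\CX_\reg$ sidesteps this entirely: it first bounds $\frac1m\log\rho_m$ from above on compact subsets of $\CX_\reg$ (Step~1), and then propagates the bound across the singular locus by a maximum-principle argument on curves through the fibers, using Lemma \ref{circle shell} to produce curves whose boundary avoids $\CX\setminus\CX_\reg$ (Step~2). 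If you want to keep your submean-value approach for the upper bound, you need a separate device to handle points near the fiber singularities—the submean-value inequality on Stein charts near those points does not obviously deliver the $m^n$ bound with uniform constants, since the local potential of $h^{\CL_m}$ is only continuous there.

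In short: the positive part can be salvaged (at the cost of adding the maximum-principle step), the peak-section lower bound on $A_\epsilon$ is plausible though organized differently from the paper (the paper runs H\"ormander on the resolution $\tilde\CX$ with the Mo\u i\u sezon metric, not fiberwise via the gluing lemma), but the equi-integrability claim near the singular set is a genuine missing step and is where most of the work in the actual proof lives.
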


\begin{cor}
For any smooth $(n-1,n-1)$-form $\alpha$ on $\CX$, we have 
$$\mathop{\lim}_{m\to \infty} 
\int_{\CX_t}\left( \sum_{j=1}^k r_j c_1(h_{j,t}) - 
\frac{1}{m}\omega_{FS,m,t} \right)
\wedge \alpha \to 0 $$
uniformly on compact subsets of $C$ as $m\to \infty$. 
\end{cor}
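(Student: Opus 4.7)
The strategy is to decompose the current difference into an arithmetic drift of the coefficients $r_j - p_{j,m}/m$ and a $\partial\bar\partial$ of the Bergman kernel, then control each piece using Theorem \ref{uniformly convergence section 4}. Specifically, since the Fubini--Study current on each reduced fiber $\CX_{t,\red}$ is the pullback under the Kodaira map associated to an $L^2$-orthonormal basis of $\gr_\CF H^0(\CX_t, \CL_{m,t})$, we have the pointwise identity
\[
\omega_{FS,m,t} = \sum_{j=1}^k p_{j,m}\, c_1(h_{j,t}) + \frac{\sqrt{-1}}{2\pi}\partial\bar\partial \log \rho_m,
\]
so that
\[
\sum_{j=1}^k r_j c_1(h_{j,t}) - \frac{1}{m}\omega_{FS,m,t} = \sum_{j=1}^k\Big(r_j - \tfrac{p_{j,m}}{m}\Big) c_1(h_{j,t}) - \frac{\sqrt{-1}}{2\pi m}\partial\bar\partial \log \rho_m.
\]

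After wedging with $\alpha$ and integrating, I would treat the two pieces separately. The arithmetic drift contributes $\sum_j (r_j - p_{j,m}/m)\int_{\CX_t} c_1(h_{j,t}) \wedge \alpha$; the fiber integrals $t\mapsto \int_{\CX_t} c_1(h_{j,t}) \wedge \alpha$ are continuous (by Theorem \ref{king continuous thm} applied to a local smoothing of $h_j$ and passage to the limit), hence uniformly bounded on any compact $K \seq C$, so this contribution vanishes uniformly on $K$ as $m\to\infty$. For the $\partial\bar\partial$ piece, I would integrate by parts fiberwise to obtain
\[
\int_{\CX_t} \frac{\sqrt{-1}}{2\pi m}\partial\bar\partial \log \rho_m \wedge \alpha = \frac{1}{2\pi m}\int_{\CX_t} \log \rho_m \cdot \sqrt{-1}\partial\bar\partial \alpha.
\]
Since $\alpha$ is smooth on $\pi^{-1}(K)$, the pointwise norm of the $(n,n)$-form $\sqrt{-1}\partial\bar\partial \alpha$ with respect to $\omega_t^n$ is uniformly bounded by some $C(\alpha, K) < \infty$ for $t\in K$, so this expression is bounded in absolute value by $C(\alpha, K) \cdot \varphi_m(t)$, which tends to $0$ uniformly on $K$ by Theorem \ref{uniformly convergence section 4}.

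The main obstacle is justifying the integration by parts on fibers that may be singular, reducible, or non-reduced. My approach is a standard cutoff argument: on each irreducible component of $\CX_{t,\red}$, excise a shrinking tubular neighborhood of $\CX_{t,\red,\sing} \cup \CX_\sing$ by a smooth cutoff, apply Stokes on the remaining smooth locus, and pass to the limit. The boundary terms vanish because the singular locus has complex codimension at least one and $\log \rho_m$ lies in $L^1(\omega_t^n)$ (as guaranteed by Theorem \ref{uniformly convergence section 4}), while for non-reduced fibers one sums over irreducible components with multiplicities in accordance with the convention of Section \ref{section_Fiberwise Bergman kernel}. Combining the two uniform limits yields the claim.
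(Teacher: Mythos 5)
Your proof is correct and follows essentially the same route as the paper: reduce the Fubini--Study difference to a $\partial\bar\partial$ of $\log\rho_m$, integrate by parts to move the $\partial\bar\partial$ onto $\alpha$, bound $|\iddbar\alpha_t|_{\omega_t}$ uniformly on $\pi^{-1}(K)$, and invoke Theorem~\ref{uniformly convergence section 4}. One point where you are actually \emph{more} careful than the paper: the paper's displayed identity $\sum_j r_j c_1(h_{j,t}) - \frac1m\omega_{FS,m,t} = -\frac1m\iddbar\log\rho_m$ silently drops the arithmetic drift $\sum_j(r_j - p_{j,m}/m)\,c_1(h_{j,t})$, since the Bergman kernel $\rho_m$ is taken with respect to $h^{\CL_m}=\prod_j h_j^{p_{j,m}}$, whose curvature is $\sum_j p_{j,m}c_1(h_{j,t})$, not $\sum_j m\,r_j\,c_1(h_{j,t})$; you correctly isolate this term and dispose of it by the continuity (hence local boundedness) of $t\mapsto\int_{\CX_t}c_1(h_{j,t})\wedge\alpha$. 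Your sketch of Stokes on singular/non-reduced fibers via shrinking cutoffs is a bit terse---the vanishing of boundary terms really rests on $\iddbar\log\rho_m$ being a difference of closed positive currents of order zero (FS current minus a continuous form) rather than on the $L^1$ bound for $\log\rho_m$ alone---but this is a standard point that the paper itself does not spell out.
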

\begin{proof}
Since $\sum_{j=1}^k r_j c_1(h_{j,t}) -\frac{1}{m}\omega_{FS,m,t}=- \frac{ 1 }{m} \iddbar \log(\rho_m|_{\CX_t})$, we have 
$$\int_{\CX_t}\left( \sum_{j=1}^k r_j c_1(h_{j,t}) - 
\frac{1}{m}\omega_{FS,m,t} \right)
\wedge \alpha_t
= 
-\int_{\CX_t} \frac{1}{m}
\log(\rho_m|_{\CX_t})
\cdot \iddbar\alpha_t. $$
Note that for any compact subset $K$ of $C$, $|\iddbar \alpha_t|_{\omega_t} \le C_K$ for some constant $C_K>0$, which is independent of $t\in K$. We conclude by applying the above theorem. 
\end{proof}

We will prove Theorem \ref{uniformly convergence section 4} in several steps. Since this question is local on $C$, we assume that $C=\D$ from now on.

{\it Step 1}. Upper bound on the regular part. We denote by $\CX_\reg$ the smooth locus of $\CX$ minus the singluarities of the reduction of all the fibers, in other word, $\CX_\reg$ is the largest open subset of $\CX$ such that it is smooth and the reduction of any fiber $\CX_{\reg,t,\red}$ is smooth. 

\begin{prop} \label{upper bound of the regular part}
For any compact subset $K\seq \CX_\reg$, we have 
$$\mathop{\limsup}_{m\to \infty} \,
\mathop{\sup}_{x\in K}\frac{\log(\rho_m)}{m}\le 0. $$
\end{prop}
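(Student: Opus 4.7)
The approach is the classical Tian--Catlin--Zelditch submean-value argument for a pointwise upper bound on the Bergman kernel, adapted to accommodate non-reduced fibers on $\CX_\reg$. Since $\rho_m(x)=\sup\{\lVert s(x)\rVert^2_{h_0}:s\in\gr_\CF H^0(\CX_{\pi(x)},\CL_{m,\pi(x)}),\ \lVert s\rVert_{L^2;\CX_{\pi(x)}}=1\}$, it suffices to bound $\lVert s(x)\rVert^2_{h_0}$ uniformly for all such normalized $s$ and all $x\in K$, and then take $\frac{1}{m}\log$ of that bound.

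Fix $x\in K$ and $t_0=\pi(x)$. Because $x\in\CX_\reg$, both $\CX$ and the component $Y_j$ of $\CX_{t_0,\red}$ through $x$ are smooth at $x$, so one may choose holomorphic coordinates $(z_0,z_1,\ldots,z_n)$ on a neighbourhood $U\ni x$ with $\pi-t_0=z_0^{m_j}$, where $m_j$ is the multiplicity of $Y_j$ in $\CX_{t_0}$ (so $m_j=1$ when the fiber is reduced near $x$). Trivialize each $F_i$ on $U$, write $h_i=e^{-\psi_i}$, and set $\Psi_m:=\sum_{i=1}^k p_{i,m}\psi_i$, so that $h^{\CL_m}=e^{-\Psi_m}$ and $\Psi_m/m\to\Psi_\infty:=\sum_i r_i\psi_i$ uniformly on compact subsets of $U$. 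For a normalized $s$ with $\ord_{t_0}(s)=\lam$, lift it to a holomorphic section $\tilde s$ on $U$; since $\ord_{Y_j}(\tilde s)=m_j\lam$, the function $g:=z_0^{-m_j\lam}\tilde s$ is genuinely holomorphic on $U$, and the defining formula of $h_{0,\lam}$ from Section \ref{section_Fiberwise Bergman kernel} yields
\[
\lVert s(y)\rVert^2_{h_0}=\lvert g(y)\rvert^2 e^{-\Psi_m(y)},\qquad y\in Y_j\cap U.
\]

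Since $\lvert g\rvert^2$ is plurisubharmonic, the submean value inequality on a coordinate ball $B_r(x)\seq Y_j\cap U$ combined with $\lvert g\rvert^2 e^{-\Psi_m}\le\lVert s\rVert^2_{h_0}\, e^{\sup_{B_r}\Psi_m-\Psi_m}$ gives
\[
\lVert s(x)\rVert^2_{h_0}\le \frac{C_1\, e^{\sup_{B_r}\Psi_m-\Psi_m(x)}}{r^{2n}}\int_{B_r(x)}\lVert s\rVert^2_{h_0}\,\omega_0^n \le \frac{C_1}{r^{2n}\,m_j}\, e^{\sup_{B_r}\Psi_m-\Psi_m(x)},
\]
where the last step uses $\sum_{j'}m_{j'}\int_{Y_{j'}}\lVert s\rVert^2_{h_0}\omega_0^n=\lVert s\rVert^2_{L^2;\CX_{t_0}}=1$, together with the comparison on $K$ between $\omega_0^n$ and the Euclidean volume (which absorbs the constant $C_1$). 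Taking $\frac{1}{m}\log$,
\[
\tfrac{1}{m}\log\lVert s(x)\rVert^2_{h_0}\le \tfrac{1}{m}\log\tfrac{C_1}{r^{2n}m_j}+\sup_{B_r}(\Psi_m/m)-\Psi_m(x)/m,
\]
and letting first $m\to\infty$ (the prefactor vanishes, and $\Psi_m/m\to\Psi_\infty$ uniformly on $\overline{B_r(x)}$) then $r\to 0$ (oscillation of the continuous function $\Psi_\infty$ vanishes) yields $\limsup_m \frac{1}{m}\log\rho_m(x)\le 0$. Uniformity in $x\in K$ follows from a finite cover of $K$ by such coordinate patches and a uniform choice of $r$. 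The main obstacle is the local identification $\lVert s\rVert^2_{h_0}=\lvert g\rvert^2 e^{-\Psi_m}$ on $Y_j$ when $m_j>1$: one must verify that $z_0^{-m_j\lam}\tilde s$ is a true holomorphic function (using $\ord_{Y_j}(\tilde s)=m_j\lam$), and that its boundary value on $Y_j$ is independent of the branch of $m_j$-th root implicit in the coordinates. Once this identification is in hand, the remainder is a routine submean-value estimate.
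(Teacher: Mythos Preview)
Your approach is correct and is, in fact, more elementary than the paper's. The paper regularizes the continuous metrics $h_j$ to smooth metrics $h_{j,\delta}$ satisfying $(1-\delta)h_{j,\delta}\le h_j\le(1+\delta)h_{j,\delta}$ with bounded curvature, compares $\rho_m$ to the smooth-metric Bergman kernel $\rho_m'$, and then invokes the Tian--Yau--Zelditch expansion to obtain $\rho_m'\le C_{K,\delta}\,m^n$. Your direct submean-value argument bypasses both the regularization and the appeal to TYZ, trading the sharper polynomial bound for an $e^{\epsilon m}$-type bound, which is all that is needed here; this is essentially the route used in \cite{cmm17} on a single reduced space, transplanted fiberwise.

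Two small points in your write-up deserve tightening. First, you implicitly restrict to homogeneous $s\in\gr_\CF^{\lambda}$; to pass to general $s\in\gr_\CF H^0$, observe that the decomposition $\gr_\CF H^0=\bigoplus_i\gr_\CF^{\lambda_i}$ is orthogonal both for $\langle\cdot,\cdot\rangle_{L^2;\CX_0}$ and for the pointwise form $h_0(\cdot,\cdot)(x)$, so a bound $\lVert s(x)\rVert_{h_0}^2\le M\lVert s\rVert_{L^2}^2$ for homogeneous $s$ immediately yields the same bound for all $s$. Second, the assertion $\ord_{Y_j}(\tilde s)=m_j\lambda$ is not correct as written: from $\lambda=\ord_0(s)=\min_{j'}\ord_{Y_{j'}}(s)/m_{j'}$ one only gets $\ord_{Y_j}(\tilde s)\ge m_j\lambda$. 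If the inequality is strict for the component $Y_j$ through $x$, then $h_{0,\lambda}(s)$ vanishes identically on $Y_j\cap U$ and the estimate is trivial; if equality holds, then $m_j\lambda=\ord_{Y_j}(\tilde s)\in\IZ_{\ge 0}$, so $g=z_0^{-m_j\lambda}\tilde s$ is a genuine holomorphic function on $U$ (no branch ambiguity arises), and the identity $\lVert s\rVert_{h_0}^2=|g|^2e^{-\Psi_m}$ on $Y_j\cap U$ follows directly from the definition of $h_{0,\lambda}$. With these two clarifications your argument is complete.
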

\begin{proof}
We may choose open coverings $\{W_i\seq \CX\}$ and $\{U_i\seq \CX\}$ of $K$ such that $\cup \overline{W}_i\seq U_i \seq \CX_\reg$, and there exist coordinates $z_{i,1},\cdots, z_{i,n+1}$ on $U_i $ such that $\pi(z_{i,1},\cdots, z_{i,n+1})=z_{i,n+1}^{k_i}$ for some $k_i$. 

For any $0<\delta<1$, one may construct $C^\infty$ Hermitian metric $h_{j, \delta}$ on a neighbourhood of $\overline{W}_i $ by using the modification of $h_j$ on $U_{i}$, such that $(1-\delta )h_{j,\delta}\le h_j \le (1+\delta )h_{j,\delta} $ and $c_1(h_{\delta ,t})\le \delta^{-1} \omega_t $ on each $\overline{W}_i \cap \CX_{t} $.
Hence we have 
$\rho_m\le C_{K,\delta } (1-\delta)^{-C_{r } m} (1+\delta)^{C_{r } m} \rho_m'$ on $U_i$, where $C_{r}>0$ is a constant depending only on $r_j $, $C_{K,\delta}>0$ is a constant depending only on $K$ and $\delta $, $\rho_m'$ is the fiberwise Bergman kernel defined using the Hermitian metric $\prod_{j=1}^k h^{p_{j,m}}_{j,\delta}$ and the K\"ahler form $ \sum_{j=1}^k \frac{p_{j,m}}{m} c_1(h_{j,\delta}) $. 

Since $h_{j,\delta}$ are smooth and $F_j |_{W_i} $ are trivial line bundles, we can use the boundedness of the curvature of $ c_1(h_{j,\delta ,t}) $ and the standard Tian-Yau-Zelditch expansion to show that there exists a constant $C_{K,\delta }>0$ depending only on $K$ and $\delta $ such that $\rho_m' \le C m^n$ on $K$, $\forall m\in\mathbb{N} $. Hence we have
\begin{eqnarray*}
\mathop{\sup}_{x\in K}\frac{\log(\rho_m)}{m} & \le &
 \frac{\log(C_{K,\delta } (1-\delta)^{- C_{ r } m} (1+\delta)^{C_{ r } } m^n)}{m} \\
 & \le & C_{ r} \log (1+\delta ) - C_{ r} \log (1-\delta ) + \frac{\log(C_{K,\delta } m^n)}{m} .
\end{eqnarray*}
Then we get this proposition by taking $\limsup_{m\to \infty}$ and $\delta\to 0$. 
\end{proof}

{\it Step 2}. Global upper bound. We will bound $\frac{\log(\rho_m)}{m}$ from above near $\CX\setminus \CX_\reg$ by the upper bound of the regular part and using maximal principle. We need the following lemma to avoid the singularities when applying maximal principle. 

\begin{lem} \label{circle shell}
For any $x\in \CX\setminus\CX_\reg$ and any open neighbourhood $U\seq \CX $ of $x$, there exists an open neighbourhood $U_x \seq \CX$ of $x$ and an open subset $W$ of $\CX_\reg$ with compact closure $\overline{W}\seq \CX_\reg$ such that,  any $y\in U$ is lying on a curve $C\seq \CX_{\pi(y)}$, and $y$ has a connected open neighbourhood $V$ in $C\cap U$ with $\partial V \seq W\cap U $. 
\end{lem}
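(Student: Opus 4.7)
Since $\CX$ is a normal complex space of dimension $n+1$ mapping to the Riemann surface $C$ with $n$-dimensional fibers, local embeddability of normal complex spaces lets me shrink $U$ and assume there is a closed analytic embedding $U \hookrightarrow \Omega \subset \IC^N \times \Delta$, where $\Omega$ is open in $\IC^N \times \Delta$, $\Delta\subset C$ is a disk containing $t_0 := \pi(x)$, and $\pi$ agrees with the projection to $\Delta$. After translation I may assume $x = (0, t_0)$.

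My strategy is to produce, via a single generic linear slicing, a continuous family of curves $\{C(y)\}_{y}$ lying in the respective fibers and each passing through $y$. Concretely, I fix a generic $\IC$-linear projection $p:\IC^N \to \IC^{n-1}$ and set
$$C(y) := \{z \in U \cap \CX_{\pi(y)} : p(z)= p(y)\}.$$
Since $\dim_\IC \CX_{\pi(y),\red} = n$ and $p$ imposes $n-1$ generic linear conditions, a Bertini-type dimension argument (applied componentwise to the reduction $(\CX)_\red$) guarantees that for a generic $p$ each $C(y)$ is a one-dimensional analytic subset of $\CX_{\pi(y)} \cap U$ containing $y$. Moreover, the set $\CX\setminus \CX_\reg$ intersects the fiber $\CX_{t_0,\red}$ in a proper analytic subset, so a generic codimension-$(n-1)$ linear slice meets it in a discrete set; shrinking $U$, I may arrange that $C(x) \cap (\CX\setminus\CX_\reg) \cap U \subseteq \{x\}$.

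To extract $V$ and $W$, fix a Euclidean ball $B \subset \IC^N \times \Delta$ centered at $x$ with $\overline{B}\subset \Omega \cap U$, small enough that $C(x) \cap \overline{B} \cap (\CX \setminus \CX_\reg) \subseteq \{x\}$. Then $C(x)\cap \partial B$ is a finite set of points lying in $\CX_\reg$. I choose an open set $W \subset U \cap \CX_\reg$ with compact closure $\overline{W}\subset \CX_\reg$ containing $C(x)\cap \partial B$. By the continuity of the family $y \mapsto C(y)\cap \partial B$ (which varies with the parameters $p(y)$ and $\pi(y)$, and is a proper intersection away from $x$), there is a neighborhood $U_x \subset B\cap U$ of $x$ such that for every $y\in U_x$ the finite set $C(y) \cap \partial B$ is contained in $W$. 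I take $V$ to be the connected component of $C(y) \cap B$ containing $y$; its topological boundary in $C(y)$ lies in $C(y)\cap \partial B \subset W\cap U$, as required.

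The main obstacle is the genericity and continuity of the slicing: I must guarantee that a single linear projection $p$ can be chosen so that (i) $C(y)$ is one-dimensional for every $y$ in a neighborhood of $x$, (ii) $C(x)$ meets the analytic set $\CX\setminus \CX_\reg$ only at $x$ inside a small ball, and (iii) the ``boundary circles'' $C(y)\cap \partial B$ vary continuously in $y$ so that they remain uniformly inside the prescribed open $W$. Items (i) and (ii) follow from standard dimension theory applied to the reduction $\CX_\red$ and Bertini-type transversality with respect to the proper analytic subset $\CX\setminus \CX_\reg$; possible non-reducedness of $\CX_{t_0}$ is harmless here, since the construction is purely set-theoretic. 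Item (iii) is a standard consequence of Hausdorff continuity of analytic intersections together with the transversality of the slicing along $\partial B$, which holds for generic $p$ by Sard's theorem.
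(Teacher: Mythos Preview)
Your proposal is correct and follows essentially the same approach as the paper: embed locally in $\IC^N\times\Delta$ compatibly with $\pi$, slice the fibers by generic affine subspaces of complementary dimension to obtain one-dimensional curves through each $y$, cut by a small ball so the boundary of the slice lands in $\CX_\reg$, and then use Hausdorff continuity of the slices as $y\to x$ to find the uniform neighbourhood $U_x$ and the compact shell $W$.

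The only notable difference is cosmetic. You fix a single generic linear projection $p:\IC^N\to\IC^{n-1}$ and take $C(y)=p^{-1}(p(y))\cap\CX_{\pi(y)}$, so all your slicing planes are parallel; the paper instead takes, for each $y$, an affine subspace $L_y$ that is a small perturbation of the translate $L_0+(y-x)$, allowing the direction to wiggle. Your uniform choice is justified by upper semicontinuity of fiber dimension for the map $(p,\pi):\CX\to\IC^{n-1}\times\Delta$ (once the fiber through $x$ is one-dimensional, so are all nearby fibers), which makes the extra perturbation in the paper unnecessary and yields a slightly cleaner argument. Both routes arrive at the same Hausdorff-continuity endgame for $C(y)\cap\partial B$.
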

\begin{proof}
Let $U_0\seq \CX$ be an open neighbourhood of $x$ with an embedding $U_0\seq \IC^{N+1}, x\mapsto 0$ such that $\pi|_{U_0}$ is compatible with the projection of $\IC^{N+1}$ to the first coordinate. And we denote by $\IC^N_t$ the fiber over $t\in\IC$ of this projection. 
Let $B_{r_0 }(x )$ be the restriction of the standard open ball of $\IC^{N+1}$ with radius $r_0 >0$ on $\CX$. Without loss of generality, we assume that $B_{r_0} (x) \cap \CX \subset U $. 

Note that $\dim(\CX_t)=n$ and $\dim(\CX\setminus \CX_\reg)\le n-1$, we can choose a $(N-n+1)$-dimensional linear subspace $L_0\seq \IC^{N}_0$ such that $L_0\cap \CX_0$ is pure of dimension one near $x$ and $L_0\cap (\CX\setminus\CX_\reg)$ is a discrete set of points (both conditions are open). See also \cite[Theorem II-6.2]{dm1}. Hence there exists $r\in (\frac{r_0 }{2} ,r_0 ) $ such that $L_0\cap (\CX\setminus\CX_\reg)\cap \partial B_r(x)=\emptyset $. Now $V_0=L_0\cap\CX_0\cap B_r(x) \seq \IC^{N}_0 $ is an analytic variety of pure dimension $1$ such that $x \in V_0$ and $\partial V_0 \seq \CX_\reg $. 

Let $W$ be an open neighbourhood of $\partial V_0$ in $\CX_\reg$ with compact closure $\overline{W}\seq \CX_\reg$. 
For any $y\in B_r(x)$, we consider the $(N-n+1)$-dimensional linear subspace $L_{y,0} = L_0 + y-x \seq \IC^N_t $. Apply the above argument again, one can see that there exists a $(N-n+1)$-dimensional linear subspace $L_{y } \seq \IC^N_t $ such that $y\in L_y $, $V_y=L_y\cap \CX_{\pi(y)} \cap B_r(x)$ is an analytic variety of pure dimension $1$ and $d_H ( L_{y,0} \cap B_{1} (y ) , L_y \cap B_{1} (y ) ) \le |x-y| $, where $d_H$ is the Hausdorff distance in $\IC^{y+1} $. Note that $V_y $ converges to $V_0$ as $y\to x$ in the Hausdorff sense in $\IC^{n+1}$. Then we can find a small open neighbourhood $U_x \seq \CX$ of $x$ such that $\partial V_y$ lies in $W$. If we replace $V_y$ by a connected component containing $y$, we assume that $V_y $ is connected. Then the open subsets $W, U_x $ are as desired. 
\end{proof}

\begin{prop}
For any compact subset $K\seq C$, we have 
$$\mathop{\limsup}_{m\to \infty} \,
\mathop{\sup}_{x\in \pi^{-1}(K)}\frac{\log(\rho_m)}{m}\le 0. $$
\end{prop}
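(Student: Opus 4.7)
The plan is to extend the regular-locus estimate of Proposition \ref{upper bound of the regular part} to all of $\pi^{-1}(K)$ via a maximum principle argument along the one-dimensional slices produced by Lemma \ref{circle shell}. Since $\pi$ is proper, $\pi^{-1}(K)$ is compact, and hence so is the closed subset $S := \pi^{-1}(K) \setminus \CX_\reg$. The claim therefore reduces to controlling $\limsup_m \sup_S \frac{1}{m}\log\rho_m$; the remaining part of $\pi^{-1}(K)$ can be enclosed in a compact subset of $\CX_\reg$, on which the previous proposition already gives the bound.

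The key local input is subharmonicity. Around any $x_0 \in S$, choose a coordinate chart $U \subset \CX$ on which each $F_j$ is trivialized with $h_j = e^{-\varphi_j}$ for a continuous $\varphi_j$, which is plurisubharmonic on each fiber by the hypothesis $c_1(h_{j,t}) \ge 0$. For $t \neq 0$ the fiber $\CX_t$ is reduced; taking a fiberwise $L^2$-orthonormal basis $\{s_i\}$ of $H^0(\CX_t,\CL_{m,t})$ and writing its coefficients in the trivialization as holomorphic functions $f_i$, we have
$$u_{m,t}(x) := \log \rho_m(x) + \sum_{j=1}^k p_{j,m}\,\varphi_j(x) = \log \sum_i |f_i(x)|^2, \qquad x \in \CX_t \cap U,$$
which is plurisubharmonic on the smooth locus of $\CX_t \cap U$ and extends plurisubharmonically across its singular set. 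Shrinking $U$ so that $\mathrm{osc}_U\,\varphi_j < \delta$ for a prescribed $\delta > 0$, and invoking Lemma \ref{circle shell}, we obtain $U_{x_0} \subset U$ together with an open set $W \subset \CX_\reg$ having compact closure $\overline W \subset \CX_\reg$, such that every $y \in U_{x_0}$ has a connected neighborhood $V_y$ in a one-dimensional analytic set $C \subset \CX_{\pi(y)}$, with $V_y \subset C \cap U$ and $\partial V_y \subset W \cap U$. The maximum principle applied to $u_{m,\pi(y)}$ on $V_y$ then yields
$$\frac{\log \rho_m(y)}{m} \;\le\; \sup_{\overline W}\frac{\log \rho_m}{m} \;+\; \delta\sum_{j=1}^k \frac{p_{j,m}}{m}.$$

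By compactness of $S$, cover it by finitely many such $U_{x_i}$; then $\overline W_\ast := \bigcup_i \overline{W_i}$ is compact in $\CX_\reg$, and together with any compact subset of $\CX_\reg$ containing $\pi^{-1}(K) \setminus \bigcup_i U_{x_i}$ it forms a compact subset of $\CX_\reg$. Proposition \ref{upper bound of the regular part} gives $\limsup_m \sup \frac{1}{m}\log\rho_m \le 0$ on this compact subset. Combining with the displayed pointwise bound and letting $\delta \to 0$ (using $p_{j,m}/m \to r_j$), the conclusion follows.

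The main obstacle I anticipate lies in applying the subharmonicity argument at points $y \in \CX_0$ whose underlying component is non-reduced: the fiberwise Bergman kernel on $\CX_0$ is defined through the filtration $\CF^{\bullet}$ on $H^0(\CX_0,\CL_{m,0})$ together with the twisted norm $h_0$, and so is not manifestly plurisubharmonic on $\CX_{0,\red}$ in the naive sense. I expect this to be bypassed by first running the argument on the reduced fibers $\CX_t$ ($t \neq 0$) and then passing to the limit $t \to 0$: for $m$ sufficiently large, the strict positivity $c_1(h_{1,t}) \ge \varepsilon\omega_t$ combined with relative Kodaira vanishing forces $h^0(\CX_t,\CL_{m,t})$ to be constant in $t$, so by Theorem \ref{maintheorem} the function $\rho_m$ is continuous on $\CX$, and the bounds on the generic fibers transfer to the central one.
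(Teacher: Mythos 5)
Your plan — extending the regular-locus bound via the maximum principle along the one-dimensional slices supplied by Lemma \ref{circle shell} — is the same strategy the paper uses, and the subharmonicity of $\log\rho_m + \sum_j p_{j,m}\varphi_j$ on the reduced fibers is a legitimate (indeed slightly cleaner) variant of the paper's step of applying the maximum modulus principle to the local holomorphic representative $f$ of a single extremizing section. On $\CX_t$ with $t\neq 0$ your displayed pointwise estimate is correct, and the covering/compactness finish is fine.

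The gap is exactly the one you flag at the end, and your proposed repair does not close it. You suggest deducing constancy of $h^0(\CX_t,\CL_{m,t})$ for large $m$ from "relative Kodaira vanishing" using $c_1(h_{1,t})\ge \varepsilon\omega_t$, and then invoking Theorem \ref{maintheorem} to get continuity of $\rho_m$ so that the generic-fiber bound passes to $\CX_0$. But the hypotheses of Theorem \ref{uniformly convergencechap1} do not give you this: the metrics $h_j$ are only continuous (so there is no curvature form to feed into a Bochner--Kodaira identity), the fibers $\CX_t$ for $t\neq 0$ are reduced but can be singular, and the bundles $F_j$ for $j\ge 2$ are only assumed nef on the fibers. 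None of the standard vanishing theorems applies in this generality, and constancy of $h^0(\CX_t,\CL_{m,t})$ is simply not part of the setup of the proposition (nor is it claimed anywhere in Section~\ref{section_uniform convergence}). So the appeal to Theorem \ref{maintheorem} is unavailable, and the reduced-fiber estimate is left stranded away from $\CX_0$.

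The paper's proof instead runs the maximum-principle argument \emph{at} the central fiber using the fiberwise Bergman kernel directly: an extremizer $s\in \gr_\CF H^0(\CX_0,\CL_{m,0})$ lifts to a local holomorphic section $\tilde s$ near $\CX_0$, one writes $\tilde s = f\prod_j e_j^{p_{j,m}}$ in a trivialization, and the quantity $h_0(s)(y)$ is (up to the $\prod\|e_j\|^{2p_{j,m}}$ factor, controlled by $\delta$) the modulus squared of the holomorphic function $\pi^{-\ord_{Y_1}(\tilde s)}f^{\,m_1}$ along the slice, which obeys the maximum modulus principle with boundary in $\overline W\subset \CX_\reg$; the comparison $h_0(s)(z)\le\rho_m(z)$ on the boundary then closes the estimate. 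If you want to keep your subharmonicity formulation, the correct fix is along the same lines: on $\CX_{0,\red}$, the fiberwise Bergman kernel is a finite sum $\sum_i\|s_i\|^2_{h_0}$ whose twisted local representatives are $|\text{holomorphic}|^2$ on each component, so the maximum principle can still be applied along the $1$-dimensional slice after this bookkeeping — but you cannot replace this by an appeal to continuity of $\rho_m$, which is not available here.
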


\begin{proof}
Fix $0<\delta<1$. For any $x\in \CX\setminus\CX_\reg$, we choose an open neighbourhood $U$ of $x$ in $\CX$ and holomorphic frames $e_j$ of $F_j$ on $U$ such that $ 1-\delta \le \lVert e_j \rVert_{h_j}^2 \leq 1+\delta $. Then we have open subsets $U_x ,W\seq \CX$ as in Lemma \ref{circle shell}. 
For any $y\in U_x ,$ $ t=\pi(y)$, we may choose $s\in H^0(\CX_t, \CL_t)$ such that $\rho_m(y)=\lVert s(y) \rVert_{\prod_{j=1}^k h_{j}^{p_{j,m}} }^2$. We can write $s=f\prod_{j=1}^k e_{j}^{p_{j,m}}$ for some $f\in \CO_{\CX_t}(U\cap\CX_t)$, then $\rho_m(y)=|f(y)|^2 \prod_{j=1}^k \lVert e_j \rVert_{h_j}^{2 p_{j,m}}$. By Lemma \ref{circle shell}, there is a curve $C\seq \CX_t$ passing through $y$, and a connected neighbourhood $V$ of $y$ in $C$ such that $\partial V\seq W$. Since $\lVert s(z) \rVert_{\prod_{j=1}^k h_{j}^{p_{j,m}} }^2\le \rho_m(z)$ for any $z\in W \cap \CX_t $, by maximal principle, we have 
$$\rho_m(y)=|f|^2 \prod_{j=1}^k \lVert e_j \rVert_{h_j}^{2 p_{j,m}}
\le  (1-\delta)^{-C_r m} (1+\delta)^{C_r m} | f(z)|^2 \le (1-\delta)^{-C_r m} (1+\delta)^{C_r m} \rho_m(z), $$
for some $z\in W$, where $C_r >0$ is a constant depending only on $r_1 ,\cdots ,r_k$. Then we can get the estimate
\begin{eqnarray*}
    \sup_{y\in U_x } \frac{\log ( \rho_m(y) )}{m} & \le & C_r \log (1+\delta) - C_r \log (1-\delta) + \sup_{z\in \overline{W}} \frac{\log ( \rho_m(z) )}{m} \\
    & \le & 2\delta +C_r \log (1+\delta) - C_r \log (1-\delta)
\end{eqnarray*}
for sufficiently large $m $, where the second inequality follows from the argument in Proposition \ref{upper bound of the regular part}. 

Since $\pi$ is proper and $K$ is compact, we can choose a finite open cover $\{U_i\}$ of $\pi^{-1}(K)\setminus \CX_\reg$ such that any $U_i$ has the property stated in Lemma \ref{circle shell}. Then we get the global estimate 
$$\mathop{\sup}_{y\in \cup_i U_i} \frac{\rho_m(y)}{m} \le 2\delta +\log (1+\delta) - \log (1-\delta) . $$ 
We may choose an open subset $V\seq \CX_\reg\cap \pi^{-1}(K)$ with compact closure and $\CX\setminus V\seq \cup_i U_i$. Hence
$$\mathop{\sup}_{y\in \pi^{-1}(K)} \frac{\rho_m(y)}{m} \le 
\max\Big\{\mathop{\sup}_{y\in \cup_i U_i} \frac{\rho_m(y)}{m},
\mathop{\sup}_{y\in \overline{V}} \frac{\rho_m(y)}{m} \Big\}
\le 2\delta +\log (1+\delta) - \log (1-\delta) , $$
for sufficiently large $m $. We conclude by taking $\limsup_{m\to \infty}$ and $\delta\to 0$. 
\end{proof}

{\it Step 3}. Lower bound on the regular part. Now we will give an estimate of the pointwise lower bound of $\frac{\log(\rho_m)}{m}$ on compact subsets of $\CX_\reg $. We need to use H\"ormander's $L^2 $ estimate and the desingularization of analytic varieties in this step.

We first state the H\"omander's $L^2$ estimate without proof. The proof can be found in \cite{dm1, lh1}. Note that the $L^2$ estimate holds also on weakly pseudoconvex manifold (the metric is not assumed to be complete).
\begin{prop}
\label{prophormanderl2m}
Let $(M,\omega )$ be an $n$-dimensional complete K\"ahler manifold. Let $(L ,h)$ be a Hermitian holomorphic line bundle, and $\psi $ be a function on $M$, which can be approximated by a decreasing sequence of smooth functions $\left\lbrace \psi_i \right\rbrace_{i=1}^{\infty} $. Suppose that $$\sqrt{-1}\partial\bar{\partial} \psi_i + \Ric(\omega )+\Ric(h)  \geq c\omega $$
for some positive constant $c >0 $. Then for any $L $-valued $(0,q)$-form $\zeta\in L^{2}$ on $M$ with $\bar{\partial} \zeta =0$ and $\int_{M} ||\zeta ||^{2} e^{-\psi} \omega^n $ finite, there exists an $L $-valued $(0,q-1)$-form $u\in L^{2}$ such that $ \bar{\partial} u =\zeta$ and $$\int_{M} \Vert u\Vert^{2} e^{-\psi } \omega^n \leq \int_{M} c^{-1} \Vert \zeta \Vert^{2} e^{-\psi} \omega^n ,$$
where $||\cdot ||$ denotes the norms associated with $h$ and $\omega $, and $q=1,\cdots ,n$.
\end{prop}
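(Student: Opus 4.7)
The plan is to follow the classical H\"ormander--Andreotti--Vesentini functional analytic approach built on the Bochner--Kodaira--Nakano identity. First I would fix a smooth approximant $\psi_i$ and work in the weighted Hilbert space of $L$-valued $(0,q)$-forms on $M$ with inner product $\int_M \langle \cdot , \cdot\rangle_h e^{-\psi_i}\omega^n$; on this space $\bar\partial$ is a densely defined closed operator with Hilbert space adjoint $\bar\partial^{*}_{\psi_i}$. By the standard Hahn--Banach plus Riesz representation duality, producing $u$ with $\bar\partial u = \zeta$ and $\|u\|^2 \le c^{-1}\|\zeta\|^2$ reduces to the a priori inequality
$$|\langle v,\zeta\rangle|^2 \le c^{-1}\|\zeta\|^2\,\|\bar\partial^{*}_{\psi_i}v\|^2,$$
for every $v\in \ker\bar\partial \cap \mathrm{dom}(\bar\partial^{*}_{\psi_i})$, since components of $v$ orthogonal to $\ker\bar\partial$ pair trivially with $\zeta\in\ker\bar\partial$.

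The analytic heart of the proof is the twisted Bochner--Kodaira--Nakano identity: for smooth compactly supported $L$-valued $(0,q)$-forms $v$,
$$\|\bar\partial v\|^2 + \|\bar\partial^{*}_{\psi_i}v\|^2 \ge \int_M \bigl\langle [\sqrt{-1}\partial\bar\partial\psi_i + \Ric(h) + \Ric(\omega),\Lambda_\omega] v, v\bigr\rangle e^{-\psi_i}\omega^n.$$
Under the curvature hypothesis, the commutator on the right is bounded below by $c$ on $(0,q)$-forms with $q\ge 1$, yielding $c\|v\|^2 \le \|\bar\partial v\|^2 + \|\bar\partial^{*}_{\psi_i}v\|^2$. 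Completeness of $\omega$ enters here through the Andreotti--Vesentini density theorem, which guarantees that smooth compactly supported forms are dense in the graph norm of $\bar\partial \oplus \bar\partial^{*}_{\psi_i}$, so this pointwise-type estimate extends to the full domain. Restricting to $v\in\ker\bar\partial$ and combining with Cauchy--Schwarz yields the required a priori inequality, and hence a solution $u_i$ for each smoothing $\psi_i$.

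Finally I would let $i\to\infty$ and recover the original $\psi$. Since $\psi_i\searrow \psi$, one has $\int_M \|u_i\|^2 e^{-\psi_i}\omega^n \le c^{-1}\int_M\|\zeta\|^2 e^{-\psi_i}\omega^n \le c^{-1}\int_M\|\zeta\|^2 e^{-\psi}\omega^n$, so $\{u_i\}$ is uniformly bounded in the weighted $L^2$ space associated to any fixed $\psi_{i_0}$. A diagonal weak compactness argument extracts a weak limit $u$ with $\bar\partial u = \zeta$, and the combination of Fatou's lemma with the monotone convergence $e^{-\psi_i}\nearrow e^{-\psi}$ delivers the stated bound. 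The main obstacle I anticipate is the completeness assumption on $\omega$, since in the intended applications one works on non-complete open pieces of $\CX_{\reg}$; to stay within the hypotheses a preliminary step must add a small multiple of a complete K\"ahler metric (for instance coming from a strictly plurisubharmonic exhaustion on the ambient weakly pseudoconvex set) and then let this perturbation shrink, arranging that the curvature lower bound and the sharp constant $c^{-1}$ are preserved in the limit.
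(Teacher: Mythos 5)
Your proposal is correct and follows exactly the standard Hörmander--Andreotti--Vesentini route (duality plus twisted Bochner--Kodaira--Nakano, density in graph norm via completeness, then a monotone approximation and weak compactness limit), which is precisely what the paper invokes by citing Demailly and H\"ormander rather than supplying its own argument. Your closing remark about relaxing completeness to weak pseudoconvexity by adding and then shrinking a complete K\"ahler perturbation is also exactly the remark the paper makes immediately after the statement, so there is nothing to fix.
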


We will use the following version of resolution of singularities of analytic varieties. See for example \cite[Theorem 0.2]{km98} or \cite{hir64, bm97}.


\begin{thm}
\label{thmdesingularization}
Let $\CX$ be a reduced irreducible analytic variety, and $Z\subset \CX$ be a closed subvariety. Then there are a smooth analytic variety $\tilde{\CX}$ and a projective holomorphic morphism $\sigma : \tilde{\CX} \to \CX $ such that $Ex(\sigma)\cup\sigma^{-1} (Z) $ is a locally finite normal-crossing divisor of $\tilde{\CX}$.

Moreover, $\sigma$ is the composition of a locally finite sequence of blowing-ups on $\CX$, and the centers are smooth subvarieties of $\CX_{\sing} \cup Z $.
\end{thm}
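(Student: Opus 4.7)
The plan is to invoke Hironaka's theorem on resolution of singularities in the analytic category, in the variant that blows up along smooth centers contained in the singular locus while keeping track of a prescribed subvariety $Z$. Since this is a deep classical result whose full proof would fill a monograph, I will only sketch the standard constructive algorithm (Hironaka, Bierstone--Milman, Villamayor) rather than redo the details; in practice one would simply cite the references given in the statement.

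First I would define an upper-semicontinuous local invariant $\mathrm{inv}(x)$ at each point $x\in\CX$ taking values in a totally well-ordered set, measuring jointly the singularity of $\CX$ at $x$ and the failure of the preimage of $Z$ together with the exceptional divisors created so far to form a normal-crossing configuration there. A standard choice combines the Hilbert--Samuel function of $\CO_{\CX,x}$ with multiplicity data of the exceptional and $Z$-components through $x$. Bennett's upper-semicontinuity theorem then guarantees that the locus $S\seq\CX$ where $\mathrm{inv}$ attains its maximum is a closed analytic subvariety contained in $\CX_{\sing}\cup Z$.

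The core technical step is to show that $S$ is smooth (possibly after finitely many preliminary blow-ups with smooth centers having the same maximal invariant) and that blowing up $S$ strictly decreases $\sup_{x}\mathrm{inv}(x)$. This is proved by induction on $\dim\CX$, using coefficient ideals and Weierstrass preparation to descend the problem to a hypersurface of smaller dimension via the maximal-contact construction. This is the main obstacle: essentially all of Hironaka's original proof, and of every modern refinement, is devoted to verifying that $\mathrm{inv}$ is well-defined independently of auxiliary local choices, that it behaves functorially under blow-up and under restriction to the strict transform of a maximal-contact hypersurface, and that it does in fact strictly drop.

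Finally, because the algorithm is canonical, the local resolutions on overlapping charts patch to a global morphism $\sigma:\tilde{\CX}\to\CX$. Each individual blow-up is projective with smooth center lying in the strict transform of $\CX_{\sing}\cup Z$, so $\sigma$ is a locally finite composition of projective blow-ups; local finiteness of the sequence follows from termination of the algorithm on any relatively compact open subset of $\CX$, which in turn is forced by the well-ordering of the invariant together with its strict decrease. That $\mathrm{Ex}(\sigma)\cup\sigma^{-1}(Z)$ is a normal-crossing divisor is built into the very definition of $\mathrm{inv}$, which only reaches its terminal value once this configuration has been achieved.
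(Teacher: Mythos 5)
The paper does not prove this theorem; immediately before stating it, the authors simply cite \cite[Theorem 0.2]{km98} and \cite{hir64, bm97} and move on. Your proposal is therefore not competing with an in-paper argument but with a citation, and it does the right thing: it identifies the same body of work (Hironaka, Bierstone--Milman, Villamayor) and gives an accurate high-level account of the canonical desingularization algorithm — the upper-semicontinuous invariant, maximal-contact induction on dimension, strict decrease of the invariant under blow-up, patching via canonicity, and local finiteness on relatively compact subsets — together with an honest acknowledgement that the full verification is monograph-length and is exactly what is carried out in the cited references. This matches the paper's intent.

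One small remark for precision: the theorem as stated says the centers are ``smooth subvarieties of $\CX_{\sing}\cup Z$,'' but after the first blow-up the centers live in the intermediate varieties, not in $\CX$ itself; your phrasing — centers contained in the strict transform of $\CX_{\sing}\cup Z$ (together with the accumulated exceptional divisors) — is the correct reading, and is the form in which Bierstone--Milman's embedded desingularization with a distinguished divisor is actually proved.
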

The following construction of metrics on $\tilde{\CX}$ is essentially due to Mo\u i\u sezon \cite[Lemma 1]{moi67} and Coman-Ma-Marinescu \cite[Lemma 2.2]{cmm17}.

\begin{lem}
\label{lemmoishezonconstruction}
Let $\CX$ be a reduced irreducible analytic variety with smooth Hermitian metric $\omega $, and $\sigma : \tilde{\CX} \to \CX $ be a resolution of singularities as in Theorem \ref{thmdesingularization}, with exceptional divisor $E$. Then for any compact subset $K\subset \tilde{\CX} $, there exist a smooth Hermitian metric $\theta $ on $\CO_{\tilde{\CX}} (-E) |_{K} $ and a constant $C_K >0$ such that $C_K \sigma^* \omega +c_1 (\theta ) >0 $ on $K$.
\end{lem}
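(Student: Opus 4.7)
The plan is to induct on the length $N$ of the composition $\sigma = \sigma_N \circ \cdots \circ \sigma_1$ afforded by Theorem \ref{thmdesingularization}. The base case is a single blowup $\sigma: \tilde{\CX} \to \CX$ along a smooth center $Z \subset \CX_\sing$. Near any point of the exceptional divisor $E = \sigma^{-1}(Z)$, I would choose local holomorphic functions $f_1, \ldots, f_r$ on $\CX$ cutting out $Z$, where $r$ is the codimension of $Z$. The rational map $[f_1 : \cdots : f_r] : \CX \dashrightarrow \IP^{r-1}$ is resolved by the blowup, yielding a holomorphic map $\phi: \tilde{\CX} \to \IP^{r-1}$ defined near $E$ such that $\phi^* \CO_{\IP^{r-1}}(1) \cong \CO_{\tilde{\CX}}(-E)$. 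Pulling back the Fubini--Study metric produces a smooth Hermitian metric $\theta_0$ on $\CO_{\tilde{\CX}}(-E)$ defined near $E$, whose curvature is the semipositive form $\phi^*\omega_{FS}$; away from a neighborhood of $E$, I would extend $\theta_0$ by any smooth metric using a partition of unity.

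To verify positivity on the compact set $K$, work at a point $p \in E \cap K$ in blowup coordinates $(w_1, \ldots, w_n)$ in which $\sigma$ takes the standard form $\sigma(w) = (w_1,\, w_1 w_2,\, \ldots,\, w_1 w_r,\, w_{r+1},\, \ldots,\, w_n)$, so that $E = \{w_1 = 0\}$ and $\phi(w) = [1 : w_2 : \cdots : w_r]$. Then $c_1(\theta_0)$ is strictly positive in the fiber directions $\partial_{w_2}, \ldots, \partial_{w_r}$ and vanishes along the normal direction $\partial_{w_1}$ and the horizontal directions $\partial_{w_{r+1}}, \ldots, \partial_{w_n}$. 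Meanwhile $\sigma^*\omega$ is strictly positive in the normal and horizontal directions but degenerates in the fiber directions along $E$. The two forms therefore complement each other at $p$, and a compactness argument yields a uniform $C_K > 0$ such that $C_K \sigma^*\omega + c_1(\theta_0) > 0$ on all of $E \cap K$; away from $E$ the pullback $\sigma^*\omega$ is already strictly positive, so enlarging $C_K$ if necessary gives positivity on all of $K$.

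For the inductive step, I would decompose $\sigma = \sigma' \circ \tau$, where $\sigma': \CX' \to \CX$ accounts for the first $N-1$ blowups (with exceptional divisor $E'$) and $\tau: \tilde{\CX} \to \CX'$ is the final blowup along a smooth center (with exceptional divisor $E_N$); then $E = \tau^{-1}(E') + E_N$ and $\CO_{\tilde{\CX}}(-E) \cong \tau^*\CO_{\CX'}(-E') \otimes \CO_{\tilde{\CX}}(-E_N)$. The inductive hypothesis furnishes a metric $\theta'$ on $\CO_{\CX'}(-E')$ with $C'\sigma'^*\omega + c_1(\theta') > 0$ on $\tau(K) \subset \CX'$. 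The resulting smooth strictly positive $(1,1)$-form $\omega' := C'\sigma'^*\omega + c_1(\theta')$ plays the role of $\omega$ in the base case applied to $\tau$, yielding a metric $\theta_N$ on $\CO_{\tilde{\CX}}(-E_N)$ with $C''\tau^*\omega' + c_1(\theta_N) > 0$ on $K$. Setting $\theta := \tau^*\theta' \otimes \theta_N$ and $C_K := C'C''$ then gives $C_K \sigma^*\omega + c_1(\theta) > 0$ on $K$, as required.

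The main obstacle is ensuring that the Fubini--Study-type construction goes through on the possibly still singular intermediate spaces $\CX'$ at each inductive step. Since the construction of $\theta_0$ is analytic-local around the smooth center of each individual blowup, it carries over once Hironaka's theorem guarantees that each center is a smooth subvariety of the current ambient. A secondary subtlety is that at the inductive step $\omega'$ is defined and strictly positive only on a neighborhood of $\tau(K) \subset \CX'$ rather than globally, but since the relevant construction and positivity estimate are themselves local around $K$, this causes no serious difficulty.
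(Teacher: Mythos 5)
Your proof takes a genuinely different route from the paper's. The paper applies the projectivity of $\sigma$ globally and at once: it covers $\sigma(K)$ by charts $U_i\subset W_i\subset\IC^{N_i}$ in which $\sigma^{-1}(U_i)$ embeds in $W_i\times\IP^{N_i'}$ compatibly with the projection, with $\CO_{\tilde{\CX}}(-E)$ identified there with the restriction of $\CO_{\IP^{N_i'}}(1)$, pulls back the Fubini--Study metric, and glues by a partition of unity. You instead induct on the length of the Hironaka tower, treating each elementary blowup through its tautological map to $\IP^{r-1}$.

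There is, however, a concrete gap in your inductive step. You have $\omega'=C'\sigma'^*\omega+c_1(\theta')>0$ near $\tau(K)$ and $C''\tau^*\omega'+c_1(\theta_N)>0$ on $K$. Expanding the latter gives
\[
  C''C'\,\sigma^*\omega \;+\; C''\,\tau^*c_1(\theta') \;+\; c_1(\theta_N)\;>\;0 ,
\]
whereas your chosen $\theta=\tau^*\theta'\otimes\theta_N$ has $c_1(\theta)=\tau^*c_1(\theta')+c_1(\theta_N)$, with coefficient $1$ rather than $C''$ on the middle term. Since $c_1(\theta')$ is \emph{not} semipositive (only $\omega'$ is), the extra factor $C''$ cannot simply be dropped, and $C_K\sigma^*\omega+c_1(\theta)>0$ does not follow. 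To close the induction one must take $\theta=(\tau^*\theta')^{a}\otimes\theta_N$ for an integer $a\geq C''$; then, using $\tau^*c_1(\theta')>-C'\sigma^*\omega$ on $K$ and a uniform lower bound $C''\tau^*\omega'+c_1(\theta_N)\geq\epsilon\tilde\omega$ on the compact set $K$ (for a fixed auxiliary Hermitian metric $\tilde\omega$), a short computation yields $aC'\sigma^*\omega+c_1(\theta)\geq\epsilon\tilde\omega>0$. But this corrected $\theta$ lives on $\CO(-a\,\tau^*E'-E_N)$, not on the fixed divisor you wanted, and these coefficients inflate at each stage of the tower.

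That last point is not entirely your fault: the lemma as written is already imprecise about what $E$ is. For a composition of two or more blowups, no Hermitian metric on $\CO(-E)$ with $E$ the reduced exceptional divisor (nor the naive total transform) can satisfy the claimed inequality. For instance, blow up $\IC^2$ at the origin and then at a point of the first exceptional curve; the second exceptional curve $E_2$ is $\sigma$-contracted, so $\sigma^*\omega|_{E_2}=0$, while $\deg\bigl(\CO(-E_{\red})|_{E_2}\bigr)=0$, so the curvature of any metric integrates to $0$ over $E_2$ and cannot be strictly positive there. Both Moishezon and Coman--Ma--Marinescu assert the existence of \emph{some} effective divisor $D$ supported on the exceptional locus with the required property; your corrected induction produces exactly such a $D$, and in this sense your strategy is sound once the statement is read in its intended form. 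One further point to make explicit: the normal form $\sigma(w)=(w_1,\,w_1w_2,\dots,w_1w_r,\,w_{r+1},\dots,w_n)$ in your base case presumes a smooth ambient, while the intermediate stages of the Hironaka tower may be singular; you should locally embed into a smooth $\IC^M$, extend the smooth center to a smooth subvariety of $\IC^M$, and carry out the computation in the blowup of $\IC^M$, viewing $\tilde{\CX}$ via its strict transform inside $\IC^M\times\IP^{r-1}$.
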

\begin{proof}
We only give an outline of the proof of this lemma here. A detailed proof can be found in \cite[Lemma 1]{moi67} or \cite[Lemma 2.2]{cmm17}.

Since $\sigma$ is projective, 
we can find an open covering $\{ U_i \}_{i\in I} $ of $\sigma (K)$, such that $U_i $ is a analytic subvariety of an open subset $W_i$ of $\mathbb{C}^{N_i}$, and $\sigma^{-1} (U_i) $ can be embedded in $W_i \times \mathbb{CP}^{N'_i} $ such that $\sigma $ is the restriction of the projection $W_i \times \mathbb{CP}^{N'_i} \to W_i $ on $\sigma^{-1} (U_i) $, and $\CO_{\tilde{\CX}} (-E) = \CO_{\mathbb{CP}^{N'_i}} (1) $ on $\sigma^{-1} (U_i) $.

By a smooth partition of unity on $X$, we can construct the smooth Hermitian metric $\theta $ on the line bundle $\CO_{\tilde{\CX}} (-E) |_{K} $ that we need.
\end{proof}

We are ready to give an estimate of the pointwise lower bound of $\frac{\log(\rho_m)}{m}$ on compact subsets of $\CX_\reg $. Our argument is similar to the one in Coman-Ma-Marinescu \cite[Theorem 1.1]{cmm17}, using the peak section method for the lower bound estimation.

\begin{prop}
\label{proppart3lowerestimate}
Let $(\CX , \omega,\CL_m ,h^{\CL_m } )$ be a system as in the beginning of this section. Then for any compact subset $K\subset \CX_\reg$, we have 
$$\mathop{\limsup}_{m\to \infty} \,
\mathop{\sup}_{x\in K}\frac{|\log(\rho_m)|}{m}= 0. $$
\end{prop}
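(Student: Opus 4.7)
The upper bound $\limsup_{m} \sup_{K}\tfrac{\log\rho_m}{m}\le 0$ is already established, so the task reduces to producing, for each $x_0\in K$ and each $\delta>0$, a holomorphic section $s_m$ of $\CL_{m,t_0}$ ($t_0=\pi(x_0)$) with
\[
\frac{\|s_m(x_0)\|^2_{h_{0}^{\CL_m}}}{\|s_m\|^2_{L^2;\CX_{t_0}}}\ge e^{-C\delta\,m}
\]
for all large $m$, uniformly in $x_0\in K$. This will be done by the standard peak-section / H\"ormander construction, but on the desingularization of $\CX_{t_0,\red}$.

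The plan is as follows. Since $x_0\in \CX_\reg$, I may pick coordinates $(z_1,\dots,z_n,w)$ on a neighborhood $U\subset\CX$ around $x_0$ in which $\pi=w^{m_j}$ and in which the component $Y_j$ of $\CX_{t_0,\red}$ containing $x_0$ is cut out by $w=0$. Choose local holomorphic frames $e_i$ of $F_i$ with $(1-\delta)\le\|e_i\|^2_{h_i}\le 1+\delta$ and with $|c_1(h_i)|\le \delta^{-1}\omega$. The trial section is the usual Gaussian peak model,
\[
\sigma_m(z):=\chi(|z|)\,\prod_{i=1}^{k}e_i^{p_{i,m}}(z)
\]
on the fiber slice $U\cap Y_j$, where $\chi$ is a smooth cutoff supported in a small ball $B_{r_m}(x_0)\subset Y_j$ with $r_m\to 0$ slower than $1/\sqrt{m}$. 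Then $\|\sigma_m(x_0)\|^2_{h^{\CL_m}}\ge (1-\delta)^{C_rm}$ while $\bar\partial\sigma_m$ is supported in the shell $\{r_m/2\le|z|\le r_m\}$.

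Next I pass to a resolution $\sigma:\tilde Y\to Y:=\CX_{t_0,\red}$ as in Theorem \ref{thmdesingularization}, which is biholomorphic over $x_0$. Using Lemma \ref{lemmoishezonconstruction}, I equip a neighborhood of the relevant compact set in $\tilde Y$ with a genuine K\"ahler form $\omega':=C\sigma^*\omega_{t_0}+c_1(\theta)$. On $\tilde Y$ I solve $\bar\partial u_m=\sigma^*\bar\partial\sigma_m$ via Proposition \ref{prophormanderl2m}, using the weight
\[
\psi_m:=(n+1)\,\log|z-x_0|^2+\varepsilon_m\,\log\|s_E\|^2_\theta,
\]
where the first summand forces the solution to vanish at $x_0$, and the second tames the exceptional divisor $E$ ($\varepsilon_m\to 0$). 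The decisive curvature estimate
\[
\iddbar\psi_m+\Ric(\omega')+c_1(\sigma^*h^{\CL_m})\;\ge\;c\,m\,\omega'
\]
follows because the hypothesis $c_1(h_{1,t_0})\ge\varepsilon\omega_{t_0}$ upgrades to $c_1(\sigma^*h^{\CL_m})\ge \varepsilon\, p_{1,m}\sigma^*\omega_{t_0}$, which is of order $m$ and therefore dominates the fixed forms $\Ric(\omega')$, $\iddbar\psi_m$, and the difference between $\sigma^*\omega_{t_0}$ and $\omega'$. H\"ormander yields $u_m\in L^2_{\psi_m}$ with $\|u_m\|^2_{L^2;\omega'}\le C/m$ and $u_m(x_0)=0$; setting $s_m:=\sigma_m-u_m$ on $Y$ gives a holomorphic section with $\|s_m(x_0)\|^2\ge(1-\delta)^{C_rm}$ and $\|s_m\|^2_{L^2;Y}\le C(1+\delta)^{C_rm}$.

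Finally I translate this back to the non-reduced fiber. Because $\ord_{t_0}(s_m)=0$ (as $s_m(x_0)\neq 0$ on $Y_j$), the section $s_m$ represents a nonzero class in $\gr^0_\CF H^0(\CX_{t_0},\CL_{m,t_0})$ on which the weighted metric $h_0$ coincides with $h$, and its contribution to $\rho_m(x_0)$ is $\|s_m(x_0)\|^2/\|s_m\|^2_{L^2;\CX_{t_0}}$. The lift of $s_m$ from $H^0(Y_j,\CL_m|_{Y_j})$ to an actual element of $H^0(\CX_{t_0},\CL_{m,t_0})$ is the one potentially delicate point: one uses the short exact sequence relating $\CL_{m,t_0}$ to its restrictions to the components $Y_j$ together with the vanishing afforded by the high positivity of $\CL_m$ (or, alternatively, one constructs $\sigma_m$ directly on the scheme-theoretic fiber using the coordinate $w$), yielding a lift whose $L^2;\CX_{t_0}$-norm differs from $\|s_m\|_{L^2;Y_j}$ only by a factor controlled by $m_j$ and $\omega$. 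Combining these bounds yields $\rho_m(x_0)\ge e^{-C\delta m}$, and letting $\delta\to 0$ after $\limsup_{m\to\infty}$ completes the proof.

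The principal obstacle is arranging the construction so that the sections produced on $\CX_{t_0,\red}$ extend to sections on the non-reduced scheme-theoretic fiber $\CX_{t_0}$ with controlled weighted norm; once this is accomplished, the H\"ormander/resolution machinery proceeds essentially as in Coman--Ma--Marinescu.
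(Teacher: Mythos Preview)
Your overall strategy---regularize the metrics up to a $\delta$-error, build a peak section, pass to a resolution, and solve $\bar\partial$ via H\"ormander with a logarithmic weight---is indeed the backbone of the paper's argument. The essential divergence is \emph{where} you resolve and solve: you resolve the reduced fiber $Y=\CX_{t_0,\red}$ and work on $\tilde Y$, whereas the paper resolves the total space $\CX$ and solves the $\bar\partial$-equation on the pseudoconvex open set $\sigma^{-1}(\pi^{-1}(\D_t))\subset\tilde\CX$. The output of the paper's construction is a holomorphic section $s''_{x,y}\in H^0(\pi^{-1}(\D_t),\CL_m)$ on an open neighbourhood of the whole fiber in $\CX$; its restriction to the \emph{scheme-theoretic} fiber $\CX_{t'}$ is then automatically an element of $H^0(\CX_{t'},\CL_{m,t'})$ for every $t'\in\D_t$, reduced or not, and its $h_0$-norm is controlled simultaneously on all components $Y_j$ by the pointwise upper bound already obtained in Steps~1--2. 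This single move dissolves both issues you flag as delicate.

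By contrast, your lifting step from $H^0(Y_j,\CL_m|_{Y_j})$ to $H^0(\CX_{t_0},\CL_{m,t_0})$ is a genuine gap. The short-exact-sequence argument needs cohomology vanishing on the (possibly very singular, non-projective) components $Y_i$, which is not available here: the $F_i$ carry only continuous metrics with semi-positive curvature, not ampleness. Even granting a lift, you would still have to bound $\int_{Y_i}\|s_m\|^2\omega_0^n$ for \emph{all} $i$ (the $L^2$-norm on $\CX_{t_0}$ is the multiplicity-weighted sum over all components), and nothing in your construction controls the lift away from $Y_j$. The alternative of ``constructing $\sigma_m$ directly on the scheme-theoretic fiber'' does not help: the cutoff trial section is fine on the non-reduced local piece, but the $\bar\partial$-equation must be solved on some smooth manifold, and correcting on $\tilde Y$ only yields holomorphicity on the reduction. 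A secondary but related problem is uniformity in $t_0=\pi(x_0)$: resolving each fiber separately gives constants (from Lemma~\ref{lemmoishezonconstruction} and from H\"ormander) that depend on the fiber, with no mechanism to make them locally uniform in $t$; the paper's single resolution of $\CX$ and single neighbourhood $\pi^{-1}(\D_t)$ handle this automatically.

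In short: replace ``resolve $\CX_{t_0,\red}$ and solve on $\tilde Y$'' by ``resolve $\CX$ and solve on $\sigma^{-1}(\pi^{-1}(\D_t))$'', then restrict the resulting ambient section to the fiber. With that change your sketch becomes the paper's proof.
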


\begin{proof}
Fix $\delta >0$ and $t\in C$. For any $x\in\CX_{t,\red }$, we can find an open neighbourhood $W_x $ of $x$ in $\CX$ and a holomorphic embedding $\tau_x : W_x \to U_x \subset \mathbb{C}^{N_x} $ such that $F_i $ are trivial line bundle on $W_x $, and $\tau_x (W_x ) $ is a closed subvariety of $U_x $, where $U_x $ is an open subset of $\mathbb{C}^{N_x} $. Without loss of generality, we assume that $U_x$ is the unit ball in $\mathbb{C}^{N_x} $. Choosing local frames $e_{i,x} \in H^0 (W_x ,F_i ) $, we may write $\psi_{i,x } = -\log h_i (e_{i,x} ,e_{i,x} ) $. By definition, $\psi_{i,x }$ are plurisubharmonic functions on $W_x$. 

Then $\psi_{i,x }$ can be extended to a continuous quasi-plurisubharmonic function $\tilde{\psi}_{i,x} $ on $U_x$ such that $\iddbar \tilde{\psi}_{i,x} \geq -\delta \omega_{ \rm Euc } $, $i=1,\cdots ,k$, where $\omega_{\rm Euc}$ is the Euclidean metric on $\mathbb{C}^{N_x}$. Moreover, we assume that $\iddbar \tilde{\psi}_{1,x} \geq \epsilon' \omega_{\rm Euc} $ for some constant $\epsilon' >0$ independent of $\delta$. See also \cite{cgz13}. By the standard regularization as in \cite{bk07, dp04}, we can construct smooth Hermitian metrics $\tilde{h}_i $ on $F_i |_{\pi^{-1} (\D_t ) } $, where $\D_t $ is an open neighbourhood of $t\in C$, such that $ 1-\delta
\leq \frac{\tilde{h}_i}{h_i} \leq 1+\delta $, $c_1 ( \tilde{h}_{i } ) \geq -\delta \tilde{\omega} $ on $\pi^{-1} (\D_t ) $, $i=1,\cdots ,k$, and $c_1 ( \tilde{h}_{1 } ) \geq \epsilon' \tilde{\omega} $ on $\pi^{-1} (\D_t ) $, where $\tilde{\omega} $ is a smooth Hermitian metric on $\pi^{-1} (\D_t ) $, and $\epsilon' >0$ is a constant independent of $\delta$. Since $c_1 ( \tilde{h}_{1 } ) >0 $, we can choose $\tilde{\omega} = c_1 ( \tilde{h}_{1 } ) $, hence $\tilde{\omega} $ is a K\"ahler metric on $\pi^{-1} ( \D_t ) $. Without loss of generality, we assume that $\epsilon' \geq 100\delta \sum_{i=1}^k r_i $. 

Now we consider the resolution of singularities $\sigma : \tilde{\CX} \to \CX $. By Lemma \ref{lemmoishezonconstruction}, there exists an integer $d\in\mathbb{N}$, and a Hermitian metric $\theta$ on $\mathcal{O}_{\tilde{\CX}} |_{\pi^{-1} (\D_t )} $ such that for any $m\in\mathbb{N}$, $ c_1 (\tilde{h}_{d,m}) >0 $ on $\mathcal{O}_{\tilde{\CX}} |_{\pi^{-1} (\D_t )} $, where $\tilde{h}_{d,m} = \prod_{j=1}^k  \sigma^* \tilde{h}^{p_{j,m+d} -p_{j,m} }_{j } \otimes \theta $. Note that we can replace the open set $\D_t$ by a precompact subset. Let $s_E \in H^0 (\tilde{\CX} ,\mathcal{O}_{\tilde{\CX}} ( E ) ) $ such that $ [s_E = 0] =E $.

Let $x\in K \cap \pi^{-1} ( \D_t ) \subset \CX_\reg \cap \pi^{-1} ( \D_t ) $. Then we can find open neighbourhoods $W_x \Subset W'_x \Subset W''_x $ of $x$ and compactly supported smooth $\CL $-valued functions $\xi_m$ on $W''_x $ such that $\xi_m$ are holomorphic on $W'_x$, $ \left\Vert \xi_m \right\Vert +\left\Vert \bar{\partial} \xi_m \right\Vert_{\tilde{\omega} } \leq C_x (1+\delta )^m $, and $ \left\Vert \xi_m \right\Vert \geq C^{-1}_x (1-\delta )^m $ on $W_x$, where $C_x >0 $ is a constant independent of $m$. Choose $\psi_y (z) = 10n^2 \eta_x \log \left| \tau_x (z) - \tau_x (y) \right| $, where $\eta_x$ is a cut off function such that $\eta_x =1 $ on $ W'_x $ and $\eta_x =0 $ outside $W''_x $. Note that $\sigma^{-1} ( \pi^{-1} (\D_t ) )$ is a pseudoconvex manifold. Then for sufficiently large $m$, we can apply H\"ormander's $L^2$ estimate to find a holomorphic section $s'_{x,y} \in H^0 \left( \sigma^{-1} ( \pi^{-1} (\D_t ) ) , \sigma^* L_m \otimes \mathcal{O}_{\tilde{\CX}} (-E)^{\lfloor \sqrt{m} \rfloor } \right) $ such that 
$$\Vert s'_{x,y} (y) \Vert_{ \prod_{j=1}^k  \sigma^* \tilde{h}^{ p_{j,m -d\lfloor \sqrt{m} \rfloor } }_{j } \otimes \prod_{j=1}^{\lfloor \sqrt{m} \rfloor } \tilde{h}_{d,m-dj} } = \left\Vert \xi_m (y) \right\Vert_{\prod_{j=1}^k  \sigma^* \tilde{h}^{ p_{j,m} }_{j }} \left\Vert s_E (y) \right\Vert_{\theta^{-\lfloor \sqrt{m} \rfloor}} ,$$
and $ \int_{\sigma^{-1} ( \pi^{-1} (\D_t ) ) } \Vert s'_{x,y} (y) \Vert^2 \leq C_x (1+\delta )^m $ where $\lfloor \cdot \rfloor $ is the greatest integer function. By definition, we can find constants $0<a<b$ such that $\left\Vert s_E \right\Vert_{\theta^{-1}} \leq b $ on $\sigma^{-1} ( \pi^{-1} (\D_t ) ) $ and $ \left\Vert s_E \right\Vert_{\theta^{-1}} \geq a $ on $K$. Let $s'_{x,y} = s''_{x,y} \otimes s_{E}^{\lfloor \sqrt{m} \rfloor} $ on $\sigma^{-1} ( \pi^{-1} (\D_t ) )  \setminus E$. Since $\sigma |_{\tilde{\CX}\setminus E} $ is biholomorphic morphism and $\CX$ is normal, $s''_{x,y}$ can be extended to a holomorphic section on $ \pi^{-1} (\D_t ) $, and $ \left\Vert s''_{x,y} (y) \right\Vert_{h^{\CL_m}}^2 \geq C_x^{-1} (1-\delta)^m $. As the arguments in Step 1 and Step 2, one can easy to see that $\left\Vert s''_{x,y} \right\Vert_{h^{\CL_m}}^2 \leq C_x (1+\delta)^m $ on $\pi^{-1} (\D_t ) $.

Combining the above results, for any constant $\delta >0$ and $x\in K$, we have an open neighbourhood $W_x$ of $x$ such that
$$\mathop{\limsup}_{m\to \infty} \,
\mathop{\sup}_{y\in K \cap W_x }\frac{|\log(\rho_m)|}{m}\leq \delta . $$

By the compactness of $K$, we conclude by letting $\delta \to 0 $.
\end{proof}

{\it Step 4}. Integration bound around the singular part. By giving a uniform estimate for the integral of $ \frac{ |\log (\rho_m ) | }{m} $ around the singular part, we will prove Theorem \ref{uniformly convergence section 4} in this step. We give the following estimate.

\begin{lem}
\label{lemmalogfintegral}
Let $\left( \CL ,h \right) $ be a line bundle with a continuous Hermitian metric on $\CX $, and $s\in H^0 (\CX ,\CL) $. Assume that $ \dim (\{s=0\} \cap \CX_{t_0} ) \leq n-1 $ for some $t_0 \in C $. Then for any  $\delta >0$, there exists an open neighbourhood $U_\delta $ of $ \CX_{t_0} \cap \{s=0\} $ in $\CX$ such that 
$$ \int_{\CX_t \cap U_{\delta} } \left| \log \Vert s \Vert \right| \omega_t^n \leq \delta ,\;\; \forall t\in C .$$
\end{lem}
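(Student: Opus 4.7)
\emph{Proof proposal.}
The plan is to apply a log resolution of singularities to reduce to an explicit estimate in simple normal crossings coordinates, then patch the local estimates by compactness. Since $\pi$ is proper, the set $K := \CX_{t_0} \cap \{s = 0\}$ is a compact subset of $\CX$; it therefore suffices to show that for each $\epsilon > 0$ and each $x \in K$ there is an open neighbourhood $V_x \ni x$ with $\int_{\CX_t \cap V_x} |\log \|s\|| \omega_t^n \leq \epsilon$ for all $t \in C$. A finite subcover of $K$ by such $V_x$, with $\epsilon$ small relative to $\delta$, then furnishes $U_\delta$. By further shrinking, I may assume each $V_x$ has compact closure and that $\CX_t \cap V_x$ is empty for $t$ outside a small disc around $t_0$.

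Fix $x \in K$ and a neighbourhood $W \ni x$ in $\CX$. Applying Theorem \ref{thmdesingularization} to the pair $(W, \{s=0\} \cup (W \cap \CX_{t_0,\red}))$, there exists a proper birational morphism $\sigma : \tilde{W} \to W$ with $\tilde{W}$ smooth such that $\sigma^*(\{s=0\}) + \sigma^*(\CX_{t_0,\red}) + \mathrm{Ex}(\sigma)$ has simple normal crossing support. The compact preimage $\sigma^{-1}(K \cap \overline{V_x})$ is covered by finitely many coordinate polydiscs $\Delta_\rho^{n+1}$ with local coordinates $(z_1, \ldots, z_{n+1})$ in which
\[
\sigma^* s = u \cdot z_1^{a_1} \cdots z_{n+1}^{a_{n+1}}, \qquad \sigma^* \pi - t_0 = v \cdot z_1^{b_1} \cdots z_{n+1}^{b_{n+1}},
\]
with holomorphic units $u, v$ and non-negative integer exponents $a_i, b_i$. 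Because $\sigma$ is birational and $\CX$ normal, one has $\int_{\CX_t \cap \sigma(\Delta_\rho)} |\log \|s\|| \omega_t^n = \int_{\tilde{\CX}_t \cap \Delta_\rho} |\log \|\sigma^* s\|| (\sigma^* \omega)_t^n$ for generic $t$, the exceptional locus contributing nothing to the fibre measure on $\tilde{\CX}_t$.

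In each chart, the bound $|\log \|\sigma^* s\|| \leq C + \sum_i a_i |\log|z_i||$ reduces the task to estimating $\int_{\tilde{\CX}_t \cap \Delta_\rho} |\log|z_i|| (\sigma^*\omega)_t^n$ for each $i$ with $a_i > 0$. If $b_i = 0$, then $z_i$ is a free coordinate on the fibre and $\int_{|z_i| < \rho} |\log|z_i|| \, dA(z_i) = O(\rho^2 |\log\rho|)$, yielding a total contribution $O(\rho^{2n} |\log\rho|)$ uniformly in $t$. If $b_i > 0$, I solve the fibre equation $v \prod z_j^{b_j} = t - t_0$ to parametrize the intersection by $z_i = \xi (t-t_0)^{1/b_i} \bigl(v \prod_{j \neq i} z_j^{b_j}\bigr)^{-1/b_i}$ on $b_i$ sheets. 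The substitution $z_i = (t-t_0)^{1/b_i} w_i$, combined with the local boundedness of $(\sigma^*\omega)_t^n$ in the remaining free variables, shows that the apparent $\log|t-t_0|$ divergence in $a_i \log|z_i|$ is offset by the fact that the fibre enters $\{|z_i| < \rho\}$ only on a subregion of the free variables whose measure has the exactly compensating $t$-dependence. The net contribution is again $O(\rho^{2n} |\log\rho|)$, uniform in $t$. Summing over the finite chart cover of $\sigma^{-1}(K \cap \overline{V_x})$ and over a finite cover of $K$ by neighbourhoods $V_x$, with $\rho$ chosen small enough, produces the desired $U_\delta$.

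The principal obstacle is the case $a_i, b_i > 0$ for the same index $i$, which corresponds geometrically to the situation where the strict transform of $\{s=0\}$ meets a component of $\sigma^{-1}(\CX_{t_0, \red})$ along an exceptional divisor. A naive pointwise estimate of $|a_i \log|z_i||$ on the fibre produces a divergent $\log|t-t_0|$ term as $t \to t_0$, which can only be controlled by the delicate cancellation with the volume of the fibre's parametrization domain described above. Making this cancellation explicit and uniform in $t$, in the presence of the bounded but non-trivial units $u, v$ and the possibly degenerate pullback $\sigma^*\omega$, is the heart of the argument.
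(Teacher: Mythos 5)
There is a genuine gap in the case you correctly identify as the hard one, namely when $a_i > 0$ and $b_i > 0$ for the same index $i$ (a divisor $\{z_i = 0\}$ lying in both $\sigma^*\{s=0\}$ and $\sigma^*\CX_{t_0}$). Your proposed mechanism — that the "measure of the fibre's parametrization domain" has a $t$-dependence that compensates the $\log|t-t_0|$ coming from $|z_i|$ — does not work if one only knows local \emph{boundedness} of $\sigma^*\omega^n$, as you write. Consider the model situation where the fibre equation in the chart is $z_i^{b_i} = t - t_0$ with no other coordinate appearing: then $|z_i| = |t-t_0|^{1/b_i}$ is constant on the fibre and the free variables $z_j$, $j\ne i$, parametrize it. If $\sigma^*\omega^n$ were merely bounded and nondegenerate in those free variables, the integral of $|\log|z_i||$ would scale like $|\log|t-t_0||$ and blow up; there is no compensation from the parametrization measure. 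The hypothesis $\dim(\{s=0\}\cap\CX_{t_0})\le n-1$ must be used in an essential way that your argument never invokes.

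The paper's mechanism is different. Since $\{z_i=0\}$ maps into $\CX_{t_0}\cap\{s=0\}$, which has dimension $\le n-1$, and $\{z_i=0\}$ is an $n$-dimensional divisor in $\tilde\CX$, the resolution $\sigma$ must \emph{contract} $\{z_i=0\}$: there is a nonvanishing vector field $X_i$ along it with $d\sigma(X_i)=0$. Consequently $\sigma^*\omega^n$ \emph{degenerates} along $\{z_i=0\}$ in the transverse-to-$z_i$ directions; concretely one gets a decomposition of the form $\alpha \le C\,\sqrt{-1}\,dz_i\wedge d\bar z_i\wedge\beta^{n-1} + \phi_i\,\beta^n$ with $\phi_i = O(|z_i|)$ and $\beta$ the Euclidean form in the remaining coordinates. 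The $dz_i\wedge d\bar z_i$ term yields $O(\epsilon^2|\log\epsilon|)$ and the transverse term yields $O(|z_i|\,|\log|z_i||)=O(\epsilon|\log\epsilon|)$, both uniformly in $t$. You list the "possibly degenerate pullback $\sigma^*\omega$" as an obstacle in your last paragraph, but that degeneracy, forced by the dimension hypothesis, is precisely what closes the argument. Without importing it, the proposal does not establish the uniform bound.
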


\begin{proof}
Since this question is local on $C$, we assume that $C=\D$ and $t_0 =0 $. Let $x\in \CX_0 \cap \{s=0\} $. Then we can find open neighbourhoods $W'_x \Subset W_x $ of $x$ in $\CX$ and a holomorphic embedding $\tau_x : W_x \to U_x \subset \mathbb{C}^{N_x} $ such that $\CL $ is trivial line bundle on $W_x $, and $\tau_x (W_x ) $ is a closed subvariety of $U_x $, where $U_x $ is an open subset of $\mathbb{C}^{N_x} $. Let $\tilde{\omega } = \tau_x^* \omega_{\rm Euc} $, and $e_x \in H^0 (W_x ,\CL ) $ be a local frame. Write $s=fe_x$ on $W_x$. By the compactness of $ \CX_0 \cap \{s=0\} $, it suffices to show that for any given $\delta >0$, there exists an open neighbourhood $U_\delta $ of $ \CX_0 \cap \{s=0\} \cap W'_x $ such that 
$$ \int_{\CX_t \cap U_{\delta} } | \log |f| | \tilde{\omega }_t^n \leq \delta ,\;\; \forall t\in C .$$

Let $\sigma : \tilde{\CX} \to \CX $ be a resolution of singularities such that $\sigma^{-1} (\CX_0 \cup \{s=0\} ) $ is a normal crossing divisor of $\tilde{\CX} $. Then for any $y\in \sigma^{-1} ( \CX_0 \cap \{s=0\} \cap \bar{W}'_x ) $, there exist an open neighbourhood $\CU_{y} $ of $y\in\tilde{\CX}$ and holomorphic coordinates $z_1 ,\cdots ,z_{n+1} $ on $\CU_y$ such that $\sigma^* \CX_0 = \sum_{i=1}^{n_1 +n_2 } l_i \{z_i=0\} $ and $\sigma^* (\{f=0\}) = \sum_{i=n_1 }^{n_1 +n_2 +n_3 } l'_i \{z_i=0\} $. Without loss of generality, We may assume that the coordinates $z= (z_1,\cdots , z_i ,\cdots ,z_{n+1} ) : \CU_y \to \mathbb{C}^{n+1} $ gives a biholomorphic map between $\CU_y $ and $\D^{n+1}$, $\pi\circ \sigma = \prod_{i=1}^{n_1 +n_2} z_i^{l_i} $ and $f\circ \sigma = \prod_{i=n_1 }^{n_1 +n_2 +n_3 } z_i^{l'_i} $ on $\CU_y$. Then $\log |f| = \sum_{i=n_1 }^{n_1 +n_2 +n_3 } l'_i \log | z_i |  $.

Let $\alpha = \sigma^* \tilde{\omega }^n $. Since $ \dim (\{s=0\} \cap \CX_{t_0} ) \leq n-1 $, for $i=n_1 ,\cdots , n_1 +n_2 $, there exists a non-vanishing vector field $X_i$ on $\CU_y \cap \{z_i=0\} $ such that $d\sigma (X_i) =0 $. It follows that for $i=n_1 ,\cdots , n_1 +n_2 $, there exist a constant $C_{\alpha} >0 $ and an analytic real function $\phi_i $ on $\CU_y$ such that $\phi_i =0 $ on $\CU_y \cap \{z_i=0\} $, and 
$$\alpha \leq C_{\alpha} \sqrt{-1} dz_{i} \wedge d\bar{z}_i \wedge ( \sum_{j\neq i} \sqrt{-1} dz_j \wedge d\bar{z}_j )^{n-1} + \phi_i \cdot ( \sum_{j\neq i} \sqrt{-1} dz_j \wedge d\bar{z}_j )^{n} .$$
Shrinking the domain $\CU_y$ if necessary, we assume that $ |\phi_i |\leq C_{\alpha} |z_i| $. Fix $\epsilon >0$. Then for any $i=n_1 ,\cdots , n_1 +n_2 $, we have 
\begin{eqnarray*}
& & \int_{\sigma^{-1} (\CX_t ) \cap \{ |z_i| \leq \epsilon \} } \left| \log |z_i | \right| \alpha \\
& \leq & \int_{\sigma^{-1} (\CX_t ) \cap \{ |z_i| \leq \epsilon \} } \left| \log |z_i | \right| C_{\alpha} \sqrt{-1} dz_{i} \wedge d\bar{z}_i \wedge ( \sum_{j\neq i} \sqrt{-1} dz_j \wedge d\bar{z}_j )^{n-1} \\
& & + \int_{\sigma^{-1} (\CX_t ) \cap \{ |z_i| \leq \epsilon \} } \left| \log |z_i | \right|  \phi_i \cdot ( \sum_{j\neq i} \sqrt{-1} dz_j \wedge d\bar{z}_j )^{n} \\
& \leq & C_{\alpha } \sum_{j\neq i} \int_{ ([z_j =0])  \cap \{ |z_i| \leq \epsilon \} } |\log |z_i| | \omega^n_{\rm Euc} + C_{\alpha } \int_{ ([z_i =0]) } |z_i| |\log |z_i| | \omega^n_{\rm Euc} \\
& \leq & C_{\alpha } \int_{w\in\D , |w|\leq \epsilon  } |\log |w| | \omega_{\rm Euc} + C_{\alpha } \int_{w\in\D } |w| |\log |w| | \omega_{\rm Euc} \leq C_{\alpha } \epsilon | \log ( \epsilon ) |,
\end{eqnarray*}
where $C_{\alpha } >0 $ is a constant independent of $\epsilon$. With the similar argument, the inequality 
$$ \sum_{i=n_1}^{n_1 +n_2 +n_3 } \int_{\sigma^{-1} (\CX_t ) \cap \{ |z_i| \leq \epsilon \} } \left| \log |z_i | \right| \alpha \leq C_{\alpha } \epsilon | \log ( \epsilon ) | $$
holds for sufficiently small $\epsilon>0$

Let $\CU_{y,\epsilon} = \cup_{i=n_1}^{n_1 +n_2 +n_3} \{ |z_i| < \epsilon \}  $, which is an open neighbourhood of $\CU_y \cap \sigma^{-1} (\{s=0\}) \cap \sigma^* (\CX_0 ) $. Now the lemma follows from the compactness of $W'_x$.
\end{proof}

We are ready to prove the main theorem of this section. 

\begin{proof}[Proof of Theorem \ref{uniformly convergence section 4}]
It suffices to show that for any constant $\delta>0$ and conpact subset $K\subset C$, there exists a compact subset $\CK$ of $\CX_\reg$ such that $$\limsup_{m\to\infty} \sup_{t\in K} \int_{\CX_t \setminus \CK } \frac{1}{m}
|\log(\rho_m)|\omega_t^n \leq \delta .$$

Since this question is local on $C$, we assume that $C=\D$.

Let $d>\sum_{i=1}^{k} \frac{100}{r_i} $ be an integer, and let $A_j = \prod_{i=1}^{k} [ jr_i - \frac{10j}{d} , jr_i + \frac{10j}{d} ] \cap \mathbb{Z}^k $. For any $\xi\in A_j$, write $\xi = ( \zeta_1 ,\cdots ,\zeta_k ) $, $\CL_{\xi} = \otimes_{i=1}^k F_i^{\zeta_i} $. We define $\rho_{\xi}$ to be the fiberwise Bergman kernel of the bundle $\CL_{\xi}$. As in step 3, one can see that for sufficiently large $d$, there exists a holomorphic section $s_{\xi}$ of $\CX_{\xi}$ around $\CX_0$ such that $ \dim (\{s_{\xi}=0\} \cap \CX_{0} ) \leq n-1 $, $\forall \xi \in \cup_{j=d}^{2d} A_j $. Then Lemma \ref{lemmalogfintegral} implies that for any $ \xi \in \cup_{j=d}^{2d} A_j $, there exist an open neighbourhood $U_{\xi} $ of $ \CX_{0} \cap \{s_{\xi}=0\} $ in $\CX$ and an open neighbourhood $\D_\xi $ of $0\in\D$, such that $ \pi^{-1} (\D_\xi ) \cap \{s_\xi = 0 \} \subset U_{\xi} $, and
$$ \int_{\CX_t \cap U_{\xi} } \left| \log \Vert s_{\xi} \Vert \right| \omega_t^n \leq \delta ,\;\; \forall t\in C .$$
Let $\CU_d = \cap_{\xi \in \cup_{j=d}^{2d} A_j } U_{\xi} $ and $\D_d = \cap_{\xi \in \cup_{j=d}^{2d} A_j } \D_{\xi} $. By induction, we see that for any $j\geq 2d$ and $\xi \in A_j $, there are $\xi_1 ,\cdots ,\xi_l \in \cup_{q=d}^{2d} A_q $ such that $\sum_{p=1}^l \xi_p =\xi $. It follows that
$$\limsup_{m\to\infty} \sup_{t\in C} \int_{\CX_t \cap \CU_d } \frac{1}{m}
|\log(\rho_m)|\omega_t^n \leq \delta .$$
With a similar argument, one may show that there exists a constant $C_d >0 $ such that
$$\limsup_{m\to\infty} \sup_{x\in \pi^{-1} (\D_d ) \setminus \CU_d } \frac{1}{m}
|\log(\rho_m (x) ) | \leq C_d .$$
Combining the previous results, we see that there exists a compact subset $\CK$ of $\CX_\reg$ such that
$$\limsup_{m\to\infty} \sup_{t\in \D_d} \int_{\CX_t \setminus \CK } \frac{1}{m}
|\log(\rho_m)|\omega_t^n \leq \delta .$$
By the compactness of $K$, we can replace $\D_d $ in the above inequality by $K$. The proof is complete.
\end{proof}

\section{Applications}
\label{section_Applications}

\subsection{Relatively ample line bundles}

Let $\CX, C$ be complex varieties with a flat proper morphism $\pi:\CX\to C$. Let $\CL$ be a line bundle on $\CX$ which is relatively ample with respect to $\pi$. Recalling that we say that $\CL$ is relatively ample with respect to $\pi$ if for some $r\in \IN$, there exist sections $s_0,\cdots,s_N \in H^0(\CX, r\CL)$ inducing a closed embedding $\CX \to \IP^N\times C$, which is compatible with $\pi$, and $r\CL$ is called relatively very ample with respect to $\pi$. 

\begin{thm}\label{relamplethm}
Assume that $C$ is compact in Zariski topology. Then there exists a constant $m_0\in \IN$ such that, for any $m\ge m_0$, the function $h^0(\CX_t, mr\CL_t)$ is constant for all closed points $t\in C$. 
\end{thm}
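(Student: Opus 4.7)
The plan is to combine a uniform relative Serre vanishing with Grauert's cohomology and base change theorem (Theorem \ref{Grauertuppersemicontinuous}), reducing the statement to the constancy of the rank of a locally free sheaf. I assume $C$ is connected; otherwise the argument below is applied on each connected component.

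First I would exploit the relative very ampleness of $r\CL$ to obtain a \emph{uniform} Serre vanishing. By hypothesis there is a closed embedding $\iota : \CX \hookrightarrow \IP^N \times C$ compatible with $\pi$, such that $\iota^* \CO_{\IP^N}(1) \cong r\CL$. Since $C$ is Zariski-compact, it admits a finite cover by open subsets $U_1, \dots, U_q$ (affine in the algebraic setting, Stein in the analytic setting) on which the computation of $R^i\pi_*((mr)\CL)|_{U_\alpha}$ reduces via $\iota$ to the pushforward of $\CO_{\IP^N}(m)\otimes \iota_*\CO_{\CX}$ along the projection $\IP^N \times U_\alpha \to U_\alpha$. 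The classical Serre vanishing on $\IP^N$ then produces integers $m_\alpha$ such that $R^i\pi_*((mr)\CL)|_{U_\alpha}=0$ for every $m\ge m_\alpha$ and every $i>0$. Setting $m_0 := \max_\alpha m_\alpha$, we obtain
$$R^i\pi_*((mr)\CL) = 0 \quad \text{for all } m \ge m_0 \text{ and all } i > 0.$$

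Next, flatness of $\pi$ together with the vanishing of all higher direct images lets me invoke Grauert's theorem. For every $m \ge m_0$, the sheaf $\pi_*((mr)\CL)$ is locally free on $C$ and the base-change map
$$\pi_*((mr)\CL)_t / \mathfrak{m}_t\pi_*((mr)\CL)_t \xrightarrow{\sim} H^0(\CX_t, (mr)\CL_t)$$
is an isomorphism for every $t \in C$. Consequently $h^0(\CX_t,(mr)\CL_t)$ equals the rank of $\pi_*((mr)\CL)$ at $t$, which is locally constant, hence constant on the connected base $C$.

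The main obstacle is producing the single integer $m_0$ valid uniformly in $t\in C$: a naive fiberwise application of Serre vanishing yields only an $m$ depending on $t$, and it is precisely the Zariski-compactness assumption together with the existence of the relative embedding into $\IP^N\times C$ that allows one to pass from pointwise to uniform vanishing. Once $m_0$ is in hand the remainder of the argument is formal, reducing to the elementary fact that the rank of a locally free sheaf on a connected base is constant.
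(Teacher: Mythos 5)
Your proposal is correct and takes a genuinely different route from the paper.

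The paper's proof first observes that flatness makes the Euler characteristic $\chi(\CX_t, mr\CL_t)$ independent of $t$, and then reduces the theorem to proving $h^i(\CX_t, mr\CL_t) = 0$ for $i>0$ and $m \ge m_0$ \emph{uniformly in $t$}. The uniformity is achieved fiber by fiber: one applies Serre vanishing on a single fiber $\CX_{t_1}$, uses the upper-semicontinuity of $h^i$ to propagate vanishing at finitely many twists to a Zariski-open neighborhood $U_1$, invokes Castelnuovo--Mumford regularity (Lemma \ref{regular sheaf}) to upgrade vanishing at those finitely many twists to vanishing for \emph{all} $m \ge m_1$ over $U_1$, and finally runs noetherian induction on $C \setminus U_1$. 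The regularity lemma is the key device that converts finitely many open conditions into an open condition valid for all large $m$.

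You instead cover the Zariski-compact base $C$ by finitely many affine opens $U_\alpha$ and apply Serre vanishing \emph{for families}, i.e. Hartshorne III.5.2 applied to $\CX|_{U_\alpha} \hookrightarrow \IP^N_{A_\alpha}$ over the noetherian ring $A_\alpha$. Because the base is already collapsed into the coefficient ring, a single $m_\alpha$ works for all fibers over $U_\alpha$, and $m_0 = \max_\alpha m_\alpha$ does the job globally. This avoids the regularity machinery and the noetherian induction entirely and is, I think, cleaner in the algebraic setting. You then conclude via cohomology and base change. One small citation caveat: the version of Grauert's theorem stated in the paper (Theorem \ref{Grauertuppersemicontinuous}) takes constancy of $h^0$ as its \emph{hypothesis}, whereas what you need is the implication ``vanishing of $R^i\pi_*$ for $i>0$ on a flat proper family $\Rightarrow$ $\pi_*$ locally free and compatible with base change.'' That implication is standard (Hartshorne III.12.11 by descending induction starting from $i = N+1$, or Mumford's treatment), but it is not literally the statement quoted in the paper, so you should cite the general cohomology-and-base-change theorem rather than Theorem \ref{Grauertuppersemicontinuous}. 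Alternatively, one could retain the paper's version of Grauert by first noting that vanishing of $R^i\pi_*$ together with flatness forces $h^i(\CX_t, mr\CL_t) = 0$ for $i > 0$, so that constancy of $\chi$ gives constancy of $h^0$ directly.
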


This result may be well known to experts. For the convenience of the reader, we state a proof here. 

\begin{defi} \rm
Let $X$ be a noetherian scheme, and $L$ be a fixed globally generated ample line bundle on $X$. A coherent sheaf $\CE$ on $X$ is called {\it regular} if $H^i(X, \CE)=0$ for all $i>0$. For any $m\in \IN$, $\CE$ is called $m$-{\it regular} if $H^i(X,\CE(m-i))=0$ for all $i>0$, where $\CE(j)=\CE\otimes L^{j}$. 
\end{defi}

\begin{lem} \label{regular sheaf}
Let $X\seq \IP^N_A$ be a projective scheme over a noetherian ring $A$, and $L$ be a globally generated ample line bundle on $X$. For any coherent sheaf $\CE$ on $X$, if $\CE$ is $m_0$-regular for some $m_0\in \IN$, then $\CE$ is $m$-regular for all $m\ge m_0$. 
\end{lem}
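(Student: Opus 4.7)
My plan is to run Mumford's classical double-induction proof of Castelnuovo-Mumford regularity. Since $L$ is globally generated and ample, the complete linear system $|L|$ defines a finite morphism $\phi_L : X \to \IP^{M}_A$ with $L = \phi_L^{*}\CO(1)$. Because pushforward along a finite morphism is exact, preserves coherence, and commutes with the twist by $L$ via the projection formula, I may replace $X$ by $\IP^M_A$, $L$ by $\CO(1)$, and $\CE$ by $\phi_{L,*}\CE$; this converts the problem to the case of $\IP^N_A$ with its tautological line bundle. I then induct (outer induction) on $d := \dim \supp(\CE)$. The base case $d \le 0$ is immediate: $\CE$ is then supported on a finite set of closed points of the fibers, so $H^i(\CE(k))=0$ for all $i\ge 1$ and all $k$, and $\CE$ is $m$-regular for every $m$.

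For the inductive step I pick a section $s \in H^0(\IP^M_A, \CO(1))$ which is a non-zero divisor on $\CE$ and for which $\CE_H := \CE / s\CE$ has support of dimension strictly less than $d$. Twisting the short exact sequence $0 \to \CE(-1) \xrightarrow{\cdot s} \CE \to \CE_H \to 0$ by $\CO(k)$ yields, for each $k\in \IZ$,
$$ 0 \longrightarrow \CE(k-1) \longrightarrow \CE(k) \longrightarrow \CE_H(k) \longrightarrow 0. $$

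The proof then splits into two substeps. First, feeding $k = m_0 - i$ into the resulting long exact cohomology sequence, the $m_0$-regularity of $\CE$ kills both $H^i(\CE(m_0 - i))$ and $H^{i+1}(\CE(m_0 - i - 1))$, so $\CE_H$ is $m_0$-regular on $H := V(s)$. Since $\dim \supp(\CE_H) < d$, the outer inductive hypothesis applied to $\CE_H$ on $H$ shows $\CE_H$ is $m$-regular for every $m \ge m_0$. Second, I run an inner induction on $m - m_0$: assuming $\CE$ is $m$-regular, the segment
$$ H^i(\CE(m - i)) \longrightarrow H^i(\CE(m + 1 - i)) \longrightarrow H^i(\CE_H(m + 1 - i)) $$
of the long exact sequence (taking $k = m + 1 - i$) has vanishing outer terms, by $m$-regularity of $\CE$ and $(m+1)$-regularity of $\CE_H$ on $H$ respectively, forcing the middle to vanish. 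That is precisely $(m+1)$-regularity of $\CE$, completing the induction.

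The main obstacle is producing the hyperplane section $s$ that avoids those finitely many associated primes of $\CE$ sitting over the top-dimensional components of $\supp(\CE)$. When all residue fields of $A$ are infinite this is a routine prime-avoidance argument. In general I would either pass to a faithfully flat extension $A \to A'$ with $A'$ having infinite residue fields (using that both $m_0$-regularity and $m$-regularity are preserved and reflected under flat base change, since $H^i$ commutes with flat base change on $\IP^N$) or replace $L$ by a sufficiently large power $L^{\otimes d}$ and rescale the regularity index accordingly. The rest of the argument is formal diagram chasing in long exact sequences.
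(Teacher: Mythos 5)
The paper states this lemma without proof; it is the classical Castelnuovo--Mumford regularity lemma of Mumford, and the authors simply quote it. Your argument is essentially the standard Mumford double induction, correctly adapted to a relative base and to a globally generated ample $L$ that need not be $\CO(1)$. The reduction via the finite morphism $\phi_L$ is sound: $\phi_L$ is proper because $X$ is projective over $A$ and quasi-finite because $\phi_L^*\CO(1)=L$ is ample, hence finite; pushforward along a finite morphism is exact, preserves coherence, and the projection formula turns $L$-regularity of $\CE$ into $\CO(1)$-regularity of $\phi_{L,*}\CE$. The outer induction on the fiber dimension of $\supp\CE$, the base case (support finite over $A$, hence affine over $\mathrm{Spec}\,A$, hence no higher cohomology), the deduction that $\CE_H$ is $m_0$-regular, the appeal to the outer hypothesis on $\CE_H$, and the inner $m\mapsto m+1$ chase are all correct.

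The only step that needs tightening is the existence of the linear form $s$, which you rightly flag. Your first fix --- faithfully flat base change $A\to A'$ with all residue fields of $A'$ infinite, for instance the localization of $A[t]$ at the multiplicative set of polynomials whose coefficients generate the unit ideal of $A$ --- does work, since $H^i$ on $\IP^N$ commutes with flat base change and faithful flatness reflects vanishing, so $m$-regularity both ascends and descends along $A\to A'$. Your second suggestion, replacing $L$ by $L^{\otimes d}$ and ``rescaling the regularity index,'' does not give a proof of the stated lemma: $m$-regularity with respect to $L^{\otimes d}$ controls only the groups $H^i(\CE\otimes L^{d(m-i)})$ for $i>0$, i.e. twists lying in an arithmetic progression of step $d$, and there is no direct translation back into $m$-regularity with respect to $L$. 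Delete that alternative and retain the base-change argument.
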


\renewcommand{\proofname}{Proof of Theorem {\ref{relamplethm}}}
\begin{proof}
Here we only prove the case where $\pi $ is a flat morphism between algebraic varieties over $\IC$. The proof for the analytic case is almost the same as the proof for the algebraic case.

Since $\pi$ is flat and $\CL$ is a line bundle, by \cite[Theorem 9.9]{har77}, we see that the Hilbert polynomial
$$\chi(\CX_t, mr\CL_t):=\sum_i(-1)^{i} h^i(\CX_t, mr\CL_t)$$
is independent of $t\in C$. It suffices to show that $h^i(\CX_t, mr\CL_t)=0$ for all $i>0, m\ge m_0$ and $t\in C$, where $m_0$ is a constant depending on $\CL$.

For any $t_1 \in C$, the restriction $r\CL_{t_1}$ is ample on $\CX_{t_1}$. By \cite[Theorem 5.2]{har77}, there exists $m_1'\in \IN$ such that $h^i(\CX_{t_1}, mr\CL_{t_1})$ vanishes for all $m\ge m_1'$. We may assume that $\CX_{t_1}\seq \IP^N$, then $H^i(\CX_{t_1}, \CE)=0$ for all $i>N$ and coherent sheaf $\CE$ on $\CX_{t_1}$. Let $m_1=m_1'+N$. By the upper-semicontinuity of the function $h^i(\CX_t, mr\CL_t)$ on $C$, for any $1\le i\le N$, there exists a Zariski open neighbourhood $U_1^i$ of $t_1$ such that $h^i(\CX_t, (m_1-i)r\CL_t)=0$ for all $t\in U_1^i$. We denote by $U_1:=\cap_{i=1}^N U_1^i$, then the structure sheaf $\CO_{\CX_t}$ is $m_1$-regular (with respect to $r\CL_t$) for any $t \in U_1$. Hence by Lemma \ref{regular sheaf}, we see that $mr\CL_t$ is regular for any $m\ge m_1=m_1'+N$ and $t\in U_1$. 

Note that $C\setminus U_1\seq C$ is a Zariski closed subset. For any irreducible component $D_j$ of $C\setminus U_1$, with the same procedure, we may find $m_{2j}\in \IN$ and open subset $U_{2j} \seq C$ with $U_{2j}\cap D_j\ne \varnothing$, and $mr\CL_t$ is regular for any $t\in U_{2j}$ and $m\ge m_{2j}$. 

Now let $U_2$ be the union of all the $U_{2j}$ (where $D_j$ runs over all the irreducible components of $C\setminus U_1$) and $m_2$ be the maximum of all the $m_{2j}$. We see that $mr\CL_t$ is regular for any $t\in U_1\cup U_2$ and $m\ge \max\{m_1, m_2\}$. Note that $C\setminus (U_1\cup U_2)\seq C$ is a Zariski closed subset of codimension at least two. 
Hence by noetherian induction, we can find $m_0$ such that $mr\CL_t$ is regular for any $t\in B$ and $m\ge m_0$. 
\end{proof}
\renewcommand{\proofname}{Proof}

Combining Theorem \ref{maintheorem}, Theorem \ref{uniformly convergencechap1} and Theorem \ref{relamplethm}, one can obtain the following result.

\begin{prop}
Let $(\CX ,\CL ,\omega ,h,C,\pi )$ be as in Theorem \ref{maintheorem}. Assume that $\CL $ is relatively ample  with respect to $\pi$, and $C$ is compact in Zariski topology. Then there exists a constant $m_0 >0$ such that for any $m\geq m_0$ the fibrewise Bergman kernel, $\rho_{\CL^m_{\pi (x)} } (x) $ is continuous on $\CX$. Moreover, $ \int_{\CX_t}\frac{1}{m}
|\log(\rho_m)|\omega_t^n$ converges to $0$ uniformly on compact subsets of $C$ as $m\to\infty$.
\end{prop}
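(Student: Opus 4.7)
The plan is to chain together the three main results of the paper: Theorem \ref{relamplethm} supplies constancy of the relevant zeroth Betti number, Theorem \ref{maintheorem} converts this constancy into continuity of the fiberwise Bergman kernel, and Theorem \ref{uniformly convergencechap1} delivers the uniform convergence of $\varphi_m(t)$. The content of the proposition is essentially that the hypotheses of these three theorems all become available once we know $\CL$ is relatively ample and $C$ is compact.

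First I would invoke Theorem \ref{relamplethm} to produce an integer $m_0$ with the property that $h^0(\CX_t,\CL_t^m)$ is constant in $t\in C$ for every $m\geq m_0$. As written, Theorem \ref{relamplethm} only yields this along the arithmetic progression $m\in r\IN$, where $r\CL$ is relatively very ample; to cover all sufficiently large integers, I would rerun the Mumford-regularity argument in the proof of Theorem \ref{relamplethm} on each of the coherent sheaves $\CL^{j}$ for $j=0,1,\dots,r-1$ with respect to the polarization $r\CL$. The resulting finitely many thresholds can then be merged into a single $m_0$, after which $h^0(\CX_t,\CL_t^k)=\chi(\CX_t,\CL_t^k)$ becomes a fiber-independent polynomial in $k$ for all $k\geq m_0$.

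For the second half of the argument, with constancy of $h^0(\CX_t,\CL_t^m)$ in hand I would apply Theorem \ref{maintheorem} to the data $(\CX,\CL^m,\omega,h^m)$ for each $m\geq m_0$, yielding continuity of $\rho_{\CL_{\pi(x)}^m}(x)$ on $\CX$. For the uniform convergence, I would invoke Theorem \ref{uniformly convergencechap1} with a single line bundle $F_1=\CL$, sequence $p_{1,m}=m$, and $r_1=1$, so that $\CL_m=\CL^m$. This application requires the uniform positivity $c_1(h_t)\geq \varepsilon\omega_t$, which is not automatic from the hypotheses of Theorem \ref{maintheorem}; since $\CL$ is relatively ample, one may take $h$ to be (or to dominate) the pullback of a Fubini-Study metric via the embedding $\CX\hookrightarrow \IP^N\times C$, which supplies such positivity uniformly over the compact base $C$.

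The principal obstacle is the first step, namely extending Theorem \ref{relamplethm} from multiples of $r$ to all sufficiently large integers. This is standard Mumford-regularity bookkeeping, but one must verify that a single threshold $m_0$ works uniformly across the finitely many residues $j\bmod r$ and uniformly in $t\in C$ — the latter being available because $C$ is compact in the Zariski topology and upper-semicontinuity of $h^i$ allows the noetherian-induction strategy from the proof of Theorem \ref{relamplethm} to be repeated for each $j$. A secondary issue is to clarify (or build into the setup) the positivity requirement on $h$ so that Theorem \ref{uniformly convergencechap1} applies, which is handled by the choice of metric described above.
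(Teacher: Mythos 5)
Your proposal takes exactly the route the paper intends: the paper's ``proof'' is the single remark that the proposition follows by combining Theorems \ref{relamplethm}, \ref{maintheorem}, and \ref{uniformly convergencechap1}, which is precisely the chain you set up. Your argument is correct, and you have in fact supplied two details the paper glosses over.

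First, Theorem \ref{relamplethm} as stated gives constancy of $h^0(\CX_t,\CL_t^{mr})$ only along multiples of $r$, while the proposition asserts continuity of $\rho_{\CL^m}$ for \emph{all} $m\ge m_0$; your fix of running the Castelnuovo--Mumford regularity argument on each twist $\CL^j$, $j=0,\dots,r-1$, against the polarization $r\CL$ and then merging the finitely many thresholds is the standard and correct way to bridge that gap. Second, you are right that Theorem \ref{uniformly convergencechap1} requires $c_1(h_{1,t})\ge\varepsilon\omega_t$ uniformly, whereas the proposition only inherits the hypotheses of Theorem \ref{maintheorem}, where $h$ is merely a continuous Hermitian metric; this positivity is genuinely needed (changing $h$ by a bounded conformal factor shifts $\frac{1}{m}\log\rho_m$ by an $O(1)$ term, so without curvature control the stated limit can fail). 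Your suggested remedy --- taking $h$ to be a Fubini--Study pullback via the relatively very ample embedding --- gives a valid instance of the statement, though strictly speaking the cleanest repair is to add the hypothesis $c_1(h_t)\ge\varepsilon\omega_t$ uniformly on compact subsets of $C$ to the proposition rather than to choose a specific $h$.
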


\subsection{Test configurations} \label{tc}
Let $(X, L)$ be a polarized projective variety of dimension $n$. We recall the definition of {\it test configurations} (or {\it special degenerations}) introduced by \cite{tia97, don02} in studying K-stability. 

\begin{defi} \rm
A {\it test configuration} $(\CX, \CL)$ of a polarized projective variety $(X, L)$ consists of the following data

(1) A normal variety $\CX$ and a projective morphism $\pi:\CX \to \IC$;

(2) A $\pi$-semiample $\IQ$-line bundle $\CL$ on $\CX$;

(3) A $\IC^*$-action on $(\CX, \CL)$ such that $\pi$ is $\IC^*$-equivariant and induces a $\IC^*$-equivariant isomorphism 
$$(\CX,\CL)|_{\pi^{-1}(\IC^*)}\cong (X,L)\times \IC^*.$$ 
The test configuration $(\CX,\CL)$ is called ample if $\CL$ is $\pi$-ample. 
\end{defi}

For any test configuration $(\CX, \CL)$, we will see that $(\CX, r\CL)$ is extendable for any integer $r\in \IN$ such that $r\CL$ is a line bundle. Hence our results apply.

We denote by $R=R(X,L)=\oplus_{m\in \IZ_{\ge0}} R_m$ where $R_m=H^0(X, mL)$. 

\begin{defi} \rm
A ($\IZ$-){\it filtration} $\CF$ on $R$ is a sequence of subspaces $\CF^\lam R_m \subseteq R_m$ for each $\lam \in \IZ$ and $m \in \IN$ such that

(F1) {\it Descending.} $\CF^\lam R_m \supseteq \CF^{\lam'}R_m $ for  $\lam \le \lam'$; 

(F2) {\it Linearly Bounded.} There exists $C>0$ such that $\CF^\lam R_m = R_m$ for $\lam < -Cm$ and $\CF^\lam R_m = 0$ for $\lam > Cm$; 

(F3) {\it Multiplicable.} $\CF^\lam R_m \cdot \CF^{\lam'}R_{m'} \subseteq \CF^{\lam+\lam'}R_{m+m'}$. 
\end{defi}

For any test configuration $(\CX, \CL)$ of $(X,L)$, we may define a filtration on $R=R(X,rL)$, where $r\in \IN$ is a integer such that $r\CL$ is Cartier (a line bundle). Then we have the restriction map  
$$\iota: H^0(\CX, mr\CL)\to H^0(\CX_1, mr\CL_1)=H^0(X,mrL), m\in \IN. $$
The $\IC^*$-action on $(\CX, r\CL)$ induces a weight decomposition $H^0(\CX,mr\CL)=\oplus_{\lam \in \IZ} H^0(\CX, mr\CL)_\lam$. We define a filtration on $H^0(X, mrL)$ by 
$\CF^\lam H^0(X, mrL) = \iota(H^0(\CX, mr\CL)_\lam)$. In the other word
$$\CF^\lam H^0(X, mrL) = \{s\in H^0(X, mrL): t^{-\lam}\bar{s} \in H^0(\CX, mr\CL)\}, $$
where $\bar{s}$ is the $\IC^*$-invariant section on $\CX\setminus \CX_0$ induced by $s$.

Conversely, we have the following Rees construction. 

\begin{defi} \rm
The {\it Rees algebra} $\Rees_\CF(R)$ of a filtration $\CF$ on $R$ is defined by 
$$\Rees_\CF(R):= \oplus_{m\in\IN, \lam\in \IZ}\,t^{-\lam}\CF^{\lam}R_m, $$
the associated graded quotient algebra is defined by 
$$\gr_\CF(R):= \oplus_{m\in\IN, \lam\in \IZ}\CF^\lam R_m/\CF^{\lam+1}R_m. $$
\end{defi}

A filtration $\CF$ of $R$ is called {\it finitely generated} if $\Rees_\CF(R)$ is a finitely generated $\IC[t]$-algrbra (equivalently, $\gr_\CF(R)$ is finitely generated $\IC$-algrbra). In this case, taking $\Proj$ will define a test configuration of $(X,L)$
$$(\CX_\CF, \CL_\CF):= (\Proj_{\IC[t]}\Rees_\CF(R), \CO(1)). $$
In particular, we have
$$(\CX_{\CF,0}, \CL_{\CF,0})= (\Proj_{\IC}\gr_\CF(R), \CO(1)). $$
Hence $H^0(\CX_{\CF, 0}, m\CL_{\CF, 0})=\oplus_{\lam\in\IZ}\CF^\lam H^0(X, mL)/\CF^{\lam+1} H^0(X, mL)$ for $m\in \IN$,  and $h^0(\CX_{\CF, t}, \CL_{\CF, t})$ is constant for $t\in \IC$. 

The above two constructions are inverse to each other for ample test configurations. See for example \cite[Proposition 2.15]{bhj17}. 

\begin{prop} 
For any ample test configuration $(\CX, \CL)$ of a polarized projective variety $(X,L)$, and an integer $r\in\IN$ making $r\CL$ a globally generated line bundle, the induced filtration $\CF$ on $R=R(X, rL)$ is finitely generated and 
$$(\CX, r\CL)\cong (\CX_\CF, \CL_\CF). $$
\end{prop}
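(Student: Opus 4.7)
The plan is to identify both sides with the relative Proj of a single natural graded $\IC[t]$-algebra, so that the statement reduces to matching two descriptions of the same algebra. First, since $r\CL$ is a $\pi$-ample line bundle on the projective morphism $\pi:\CX\to\IC$, the standard relative Proj formalism applies and tells us that the relative section ring
$$S := \bigoplus_{m\geq 0} H^0(\CX, mr\CL)$$
is a finitely generated graded $\IC[t]$-algebra with $(\CX, r\CL)\cong \Proj_{\IC[t]} S$ (equipped with its tautological $\CO(1)$). This is the only place where $\pi$-ampleness is used, and it already delivers the finite generation half of the conclusion once $S$ is identified with $\Rees_\CF(R)$.

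Next I would exploit the $\IC^*$-action to decompose each $H^0(\CX, mr\CL) = \bigoplus_\lambda H^0(\CX, mr\CL)_\lambda$ into weight spaces and match them to pieces of the filtration. Under the equivariant trivialization $(\CX, r\CL)|_{\pi^{-1}(\IC^*)}\cong (X,rL)\times \IC^*$, a $\IC^*$-equivariant section of weight $\lambda$ over $\IC^*$ is uniquely of the form $(x,t)\mapsto t^{-\lambda} s(x)$ for some $s\in H^0(X, mrL)$, and the condition that this section extends across $\CX_0$ to a global section of $mr\CL$ is, by the definition of $\CF$ recalled in the text, equivalent to $s\in \CF^\lambda H^0(X, mrL)$. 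Hence I obtain canonical identifications
$$H^0(\CX, mr\CL)_\lambda \;\cong\; t^{-\lambda}\,\CF^\lambda H^0(X,mrL),$$
under which multiplication by $t$ corresponds to the inclusion $\CF^{\lambda+1}\subseteq\CF^\lambda$ and the algebra product on $S$ matches the Rees product. Summing over $(m,\lambda)$ gives an isomorphism $S\cong \Rees_\CF(R)$ of graded $\IC[t]$-algebras, and applying $\Proj_{\IC[t]}$ to both sides yields $(\CX, r\CL)\cong (\CX_\CF, \CL_\CF)$.

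The main obstacle is really bookkeeping rather than conceptual: one must pin down the sign convention for the $\IC^*$-weight so that the factor $t^{-\lambda}$ matches the definition of $\CF^\lambda$ exactly as stated, and confirm that the condition ``$t^{-\lambda}\bar{s}$ extends to $\CX$'' is controlled scheme-theoretically by the line bundle $mr\CL$ rather than by some further normalization. Here the normality of $\CX$, which is built into the definition of a test configuration, is used implicitly: it guarantees that sections of $mr\CL$ on $\CX$ are precisely the $\IC^*$-invariant meromorphic sections on $\CX$ that are regular on $\pi^{-1}(\IC^*)$ and that extend holomorphically across $\CX_0$. Once these points are verified on the level of weight spaces, finite generation, multiplicativity, and passage to $\Proj$ are all automatic.
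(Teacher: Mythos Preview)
Your argument is correct and is exactly the standard proof of this fact. Note, however, that the paper does not supply its own proof of this proposition: it is stated as a known result with a reference to \cite[Proposition 2.15]{bhj17}, so there is nothing in the paper to compare your sketch against. The outline you give (identify $\bigoplus_m H^0(\CX,mr\CL)$ with the Rees algebra via the $\IC^*$-weight decomposition, then apply $\Proj_{\IC[t]}$) is precisely the argument one finds in that reference, and the bookkeeping points you flag about sign conventions and the role of normality are the right ones.
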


Let $(\CX,\CL)$ be a merely semiample test configuration, with induced filtration $\CF$ on $R=R(X,rL)$. Then the Rees construction $(\CX_\CF, \CL_\CF)$ defines an ample model of $(\CX, r\CL)$, that is, there is a birational projective morphism $\CX\to \CX_\CF$ such that $r\CL$ is the pull-back of $\CL_\CF$. See \cite[Proposition 2.17]{bhj17} for details. Hence we have 
$$H^0(\CX_0, r\CL_0) = H^0(\CX_{\CF,0}, \CL_{\CF,0}),$$
and $h^0(\CX_t, r\CL_t)$ is constant for $t\in \IC$.

\begin{thm}[= Corollary \ref{maincoro2}]
For any (semiample) test configuration $(\CX,\CL)$ of a polarized projective variety $(X,L)$, the fiberwise Bergman kernel $\rho$ of $(\CX, r\CL, h, \omega)$ is continuous on $\CX$ for $r\in \IN$ such that $r\CL$ is a globally generated line bundle. Moreover, $ \int_{\CX_t}\frac{1}{m}
|\log(\rho_m)|\omega_t^n$ converges to $0$ uniformly on compact subsets of $C$ as $m\to\infty$.
\end{thm}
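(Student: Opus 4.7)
The plan is to verify the hypotheses of Theorems \ref{maintheorem} and \ref{uniformly convergencechap1} for the test configuration $(\CX,r\CL)$. The central fact to establish is that $h^0(\CX_t,mr\CL_t)=\dim H^0(X,mrL)$ is constant for every $t\in\IC$ and every $m\ge 1$. For $t\neq 0$ this is immediate from the $\IC^*$-equivariant trivialization $(\CX,r\CL)|_{\IC^*}\cong (X,rL)\times\IC^*$. For $t=0$ I would invoke the Rees-algebra picture discussed in the paragraph preceding the statement: the proper birational morphism $\mu:\CX\to\CX_\CF$ to the ample model satisfies $\mu^*\CL_\CF=r\CL$ and, by normality of both spaces, $\mu_*\CO_\CX=\CO_{\CX_\CF}$, so $H^0(\CX,mr\CL)=H^0(\CX_\CF,m\CL_\CF)$. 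The degree-$m$ piece $\oplus_{\lambda}t^{-\lambda}\CF^{\lambda}R_m$ of $\Rees_\CF R$ is a free $\IC[t]$-module of rank $\dim R_m$, whence $h^0(\CX_{\CF,0},m\CL_{\CF,0})=\dim\gr_\CF R_m=\dim R_m$, and this descends to $\CX_0$.

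With Hilbert-function constancy in hand, Theorem \ref{maintheorem} immediately gives the continuity of the fiberwise Bergman kernels $\rho_m$ on $\CX$: the flatness of $\pi:\CX\to\IC$ is automatic by Lemma \ref{lemmariemannsurfaceflat}, and continuous data $(h,\omega)$ can be produced from any projective compactification of $\CX$ over $\IP^1$. For the uniform convergence I would attempt to apply Theorem \ref{uniformly convergencechap1} with $k=1$, $F_1=r\CL$, $r_1=1$, $p_{1,m}=m$; the only nontrivial condition to check is the uniform strict positivity $c_1(h_t)\ge\varepsilon\omega_t$ of the chosen metric on the fibers, uniformly on compact subsets of $\IC$.

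The main obstacle is exactly this positivity in the genuinely semiample (non-ample) case. When $r\CL|_{\CX_0}$ is only globally generated, the morphism $\CX_0\to\IP^N$ it defines can contract curves, so the pulled-back Fubini-Study metric has only semi-positive curvature on $\CX_0$ and no Hermitian form $\omega_0$ can be dominated by $c_1(h_0)$. I plan to circumvent this by lifting the uniform convergence from the ample model: apply Theorem \ref{uniformly convergencechap1} on $(\CX_\CF,\CL_\CF)$, where $\CL_\CF$ is $\pi_\CF$-ample and an honest Kähler form $\omega_\CF$ with $c_1(h_\CF)_t\ge\varepsilon\omega_{\CF,t}$ is available, obtaining uniform convergence of $\int_{\CX_{\CF,t}}\tfrac{1}{m}|\log\rho_m^\CF|\,\omega_{\CF,t}^n$. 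Then I would transfer back to $\CX$ by taking $h=\mu^*h_\CF$ and choosing a Hermitian $\omega$ on $\CX$ that agrees with $\mu^*\omega_\CF$ off the $\mu$-exceptional locus (for instance $\omega=\mu^*\omega_\CF+\delta\eta$ for a Kähler $\eta$ on a compactification of $\CX$ and $\delta\to0$), so that the canonical identification $H^0(\CX,mr\CL)=H^0(\CX_\CF,m\CL_\CF)$ makes $\rho_m^\CX$ comparable to $\mu^*\rho_m^\CF$ and the fiberwise integrals on $\CX$ converge to those on $\CX_\CF$ because the $\mu_t$-exceptional loci have fiberwise measure zero. The careful comparison of the $L^2$ structures and the resulting Bergman integrals across the birational contraction is the technical heart of the argument; once it is carried out, the uniform convergence on compact subsets of $\IC$ is inherited directly from the ample model.
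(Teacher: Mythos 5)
Your proof of the continuity statement is exactly the paper's: constancy of $h^0(\CX_t, mr\CL_t)$ is established via the Rees construction and the birational ample model $\CX_\CF$, and then Theorem \ref{maintheorem} applies (flatness of $\pi$ being automatic by Lemma \ref{lemmariemannsurfaceflat}). That part is fine and matches the paper word for word in spirit.

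For the uniform-convergence clause you have put your finger on a real issue: when $(\CX,\CL)$ is strictly semiample, $r\CL_0$ is only globally generated, so no continuous Hermitian metric $h$ on $r\CL$ can satisfy the fiberwise strict positivity $c_1(h_t)\geq\varepsilon\omega_t$ demanded by Theorem \ref{uniformly convergencechap1}, and the discussion in Section \ref{tc} of the paper does not address this. However, your transfer from the ample model does not close the gap as written. Taking $h=\mu^*h_\CF$ makes the pointwise norms agree, but the $L^2$ inner products defining the Bergman kernels do not: with $\omega=\mu^*\omega_\CF+\delta\eta$ one has $\omega_t^n\geq(\mu_t^*\omega_{\CF,t})^n$, which gives the one-sided comparison $\rho_m^{\CX}\leq\mu^*\rho_m^{\CF}$, but there is no reverse bound because the density $\omega_t^n/(\mu_t^*\omega_{\CF,t})^n$ blows up along the $\mu_t$-exceptional locus. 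The remark that "the $\mu_t$-exceptional loci have fiberwise measure zero" controls the \emph{domain}, not the \emph{integrand}: the quantity you must estimate is $\int_{\CX_t}\frac{1}{m}|\log\rho_m^{\CX}|\omega_t^n$, and near the exceptional set $\omega_t^n$ charges the region non-negligibly and uniformly in $m$ while $\rho_m^{\CX}$ may be much smaller than $\mu^*\rho_m^{\CF}$ there; you therefore still need a uniform lower bound for $\rho_m^{\CX}$ near the exceptional set, of the type produced in Steps 3--4 of the proof of Theorem \ref{uniformly convergence section 4}, rederived on $\CX$ rather than on $\CX_\CF$. You flag this as "the technical heart" and leave it open, so as it stands the argument is a plan rather than a proof; and even if carried out, it yields the conclusion only for the special $\omega$ you construct, not for an arbitrary continuous fiberwise Hermitian form as the statement asserts.
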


\end{document}